\theoremstyle{plain}
\newtheorem{theorem}{Theorem}[section]
\newtheorem{observation}[theorem]{Observation}
\newtheorem{corollary}[theorem]{Corollary}
\newtheorem{lemma}[theorem]{Lemma}
\newtheorem{proposition}[theorem]{Proposition}
\newtheorem{question}[theorem]{Question}
\theoremstyle{definition}
\newtheorem{definition}[theorem]{Definition}
\newtheorem{example}[theorem]{Example}
\newtheorem{remark}[theorem]{Remark}
\newcommand{\A}{\mathcal{A}}
\newcommand{\B}{\mathcal{B}}
\newcommand{\T}{\mathcal{T}}
\newcommand{\C}{\mathcal{C}}
\newcommand{\Z}{\mathbb{Z}}
\newcommand{\nat}{\omega} 
\newcommand{\WP}{\operatorname{WP}}
\newcommand{\IP}{\operatorname{IP}}
\newcommand{\lex}{\operatorname{lex}}
\newcommand{\eqce}{=^{ce}}
\newcommand{\emin}{E_{\min}}
\newcommand{\emax}{E_{\max}}
\newcommand{\ezero}{E_0^{ce}}
\newcommand{\fg}{f.g.}
\newcommand{\AG}[1]{{\mathcal{AG}_{#1}}}
\newcommand{\CM}[1]{{\mathcal{CM}_{#1}}}
\newcommand{\CS}[1]{{\mathcal{CS}_{#1}}}
\newcommand{\UF}[2]{{\mathcal{UF}^{#1}_{#2}}}
\newcommand{\V}[1]{{\mathcal{V}_{#1}}}
\newcommand{\iAG}[1]{\cong_{\AG{#1}}}
\newcommand{\iCM}[1]{\cong_{\CM{#1}}}
\newcommand{\iCS}[1]{\cong_{\CS{#1}}}
\newcommand{\iUF}[2]{\cong_{\UF{#1}{#2}}}
\newcommand{\iV}[1]{\cong_{\V{#1}}}
\title{Isomorphism relations on classes of c.e. algebras}
\author[Meng-Che ``Turbo'' Ho]{Meng-Che ``Turbo'' Ho}
\address{Natural Sciences Division, New College of Florida}
\email{\href{mailto:mho@ncf.edu}{mho@ncf.edu}}
\author[Martin Ritter]{Martin Ritter}
\address{Institute of Discrete Mathematics and Geometry, Vienna University of Technology}
\email{\href{mailto:martin.ritter@tuwien.ac.at}{martin.ritter@tuwien.ac.at}}
\author[Luca San Mauro]{Luca San Mauro}
\address{Department of Humanistic Research and Innovation, University of Bari}
\email{\href{mailto:lucafrancesco.sanmauro@uniba.it}{lucafrancesco.sanmauro@uniba.it}}
\keywords{Isomorphism problems, computably enumerable algebras, computable reducibility, commutative semigroups, ascending chain condition}
\subjclass[2020]{03C57, 20F10}
\thanks{The first author is partially supported by the National Science Foundation under Grant No.~DMS-2054558 and No.~DMS-2452314. The second author would like to thank Steffen Lempp for many helpful discussions during the initial stages of this work. The third author is a member of INDAM-GNSAGA. The authors would like to thank Uri Andrews for a useful discussion that led to Theorem \ref{iCM-non-reduction}.}
\begin{document}

    \begin{abstract}
We investigate the complexity of isomorphism relations for classes of \emph{finitely generated} and $n$-generated computably enumerable (c.e.) algebras, presented via c.e.\ presentations---that is, as quotients of term algebras over decidable sets of generators by c.e.\ congruences. Our goal is to develop a systematic framework for analyzing such isomorphism problems from a computability-theoretic perspective. To compare their complexity, we employ the notion of \emph{computable reducibility}, measuring these relations against canonical benchmarks on c.e.\ sets, such as $\eqce$, $\ezero$, and the ordinal-indexed family $\emin(\alpha)$. A central insight of our work is the interplay between the algebraic structure and the algorithmic complexity: we show that if every algebra in a class satisfies the ascending chain condition on its congruence lattice, then the corresponding isomorphism relation is computably reducible to $\eqce$. We also apply this framework to a range of concrete cases. In particular, we analyze the isomorphism relations for finitely generated commutative semigroups, monoids, and groups, positioning them within the broader landscape of classification problems.
\end{abstract}

	\maketitle

\section{Introduction}

The complexity of isomorphism problems occupies a central role in mathematical logic, connecting areas as diverse as theoretical computer science, combinatorics, and descriptive set theory. In theoretical computer science, the classic graph isomorphism problem famously stands as one of the few natural problems neither known to be tractable nor proven NP-complete (see, e.g.,~\cite{kobler2012graph}). In descriptive set theory, the classification of isomorphism relations for natural classes of countable structures---pioneered by Friedman and Stanley \cite{friedman1989borel}---relies on the framework of Borel reducibility, which provides a systematic means of comparing the complexity of equivalence relations and, thereby, the feasibility of classifying mathematical objects up to isomorphism by ``nice'' invariants.

Within this framework, it is now well understood that, for many familiar classes of structures $\mathfrak{K}$ (such as graphs, linear orders, fields, nilpotent groups~\cite{friedman1989borel}; Boolean algebras~\cite{camerlo2001completeness}; torsion-free abelian groups~\cite{paolini2024torsion}), the isomorphism relation attains maximal complexity: for any countable language $L$, the relation $\cong_L$ Borel reduces to $\cong_{\mathfrak{K}}$. This result places strong constraints on the prospects of classification: any attempt to parametrize isomorphism types by ``explicit'' invariants is necessarily as complex as possible, at least from the perspective of Borel reducibility.

Computable structure theory further refines the notion of complexity by adding explicit effectiveness constraints. One may pursue this by considering \emph{Turing computable embeddings}~\cite{knight2007turing}, which require the reduction between isomorphism relations to be witnessed by Turing operators. Alternatively, and more relevant to our context, one can focus on the complexity of isomorphism relations among \emph{computable structures}~\cite{fokina2009equivalence,fokina2012isomorphism,montalban2016classes}, that is, structures with domain $\omega$ (the set of natural numbers) whose atomic diagram is computable. Each such structure is coded by a natural number (the index of a computable presentation), so the isomorphism relation for computable members of a class $\mathfrak{K}$ becomes an equivalence relation on a certain index set $I(\mathfrak{K}) \subseteq \omega$,  where $\omega$ denotes the set of natural numbers.

  Here, the central notion is that of \emph{computable reducibility}, where equivalence relations on indices of computable structures are compared via partial computable functions (see Section \ref{sec: comp red} for formal definitions). Computable reducibility thus serves as a computable analogue of Borel reducibility---an analogy explored extensively in the literature (see, e.g., \cite{gao2001computably, coskey2012hierarchy,clemens2023computable,andrews2024investigating,andrews2025analogues})---and naturally leads to intricate degree structures whose subtle properties have been extensively investigated (see, e.g., \cite{andrews2020theory,andrews2019joins,andrews2023structure}).
 Intriguingly, recent research reveals significant qualitative differences between the complexity of the isomorphism relations measured in the computable setting versus the descriptive set-theoretic context, with notable examples arising from, e.g., $p$-groups (compare \cite[Theorem 5]{friedman1989borel} and \cite[Theorem 4]{fokina2012isomorphism}).

In this paper, we initiate a systematic investigation into the complexity of the isomorphism problem for classes of \emph{computably enumerable (c.e.) algebras} arising from natural algebraic varieties. Our approach is inspired by classical questions from universal algebra regarding the algorithmic complexity of algebraic presentations, now viewed through an explicitly computational lens. Traditionally, the isomorphism problem for a variety of algebraic structures asks whether two algebras specified by finite presentations —-- consisting of finitely many generators and defining relations --— are isomorphic (see~\cite{kharlampovich1995algorithmic} for an overview). A natural extension of this classical scenario is to consider \emph{c.e.\ presentations}, where an algebra is given as the quotient of a term algebra over a computable set of generators by a c.e.\ congruence relation (see Section~\ref{sec:ce_pres}). Early foundational work on c.e.\ algebras was developed by Khoussainov and Selivanov~\cite{khoussainov2018journey, selivanov2003positive}, with subsequent investigations appearing in~\cite{gavryushkin2016reducibilities, gavruskin2014graphs, fokina2016linear, khoussainov2014finitely}.

A salient advantage of the c.e.\ framework is that, once a variety $\mathcal{V}$ is fixed, every natural number can be interpreted as an index for a c.e.\ presentation of a member of $\mathcal{V}$—thus, the relevant domain for the isomorphism relation is all of $\omega$. This allows us to consider isomorphism as a total equivalence relation on the natural numbers, and to investigate its complexity under \emph{total} computable reductions. Also, the c.e.\ framework is faithful to the origin of the isomorphism problem as proposed by Dehn.

Among the fundamental decision problems in combinatorial algebra, the isomorphism problem stands out as one of the three core problems introduced by Dehn~\cite{dehn1911unendliche} at the origins of the subject. While recent research has examined the complexity of the word problem for c.e.\ algebras---see~\cite{delle2020word, delle2023classifying, andrews2024algorithmically}, and references therein, in particular \cite{kasymov1986finitely}---relatively little is known about the complexity of the isomorphism problem in this setting. Some notable exceptions are the result of Miller \cite{miller1971group} that shows the isomorphism relation for finitely presented groups is $\Sigma^0_1$-complete under computable reducibility (although he did not use modern terminologies as they are not defined yet), and the result of Andrews, Harrison-Trainor, and Ho~\cite{andrews2024two} that shows the isomorphism relation for $6$-generated groups is $\Sigma^0_3$-complete under computable reducibility.

In this paper, we develop the first systematic account of the complexity of isomorphism relations for classes of 
$n$-generated and finitely generated c.e.\ algebras, thereby contributing to the understanding of the complexity landscape for finitely generated algebras (see, e.g.,\cite{thomas1999complexity} for the descriptive set-theoretic perspective, and \cite{harrison2017finitely} for the computable perspective).

To calibrate the complexity of these isomorphism relations, we employ the framework of computable reducibility and consider a collection of natural benchmark equivalence relations, following Coskey, Hamkins, and Miller~\cite{coskey2012hierarchy}. For our classification, two relations play a pivotal role:
\begin{itemize}
\item $\eqce$, the $\Pi^0_2$ equality relation on c.e.\ sets;
\item $\ezero$, the $\Sigma^0_3$ relation of eventual equality on c.e.\ sets, which is, in fact, the maximal complexity attainable for isomorphism relations of finitely generated c.e.\ algebras.
\end{itemize}

A central insight of our work is that natural algebraic properties of a variety impose upper bounds on the complexity of the corresponding isomorphism relation. In particular, we establish the following:
\begin{itemize}
\item (informal version of Theorem~\ref{acc_upper_bound}) \emph{If a class of c.e.\ algebras has the ascending chain condition (ACC) on its congruence lattices, then the associated isomorphism relation is computably reducible to~$\eqce$}. 
\end{itemize}

Through our analysis, we systematically assess the complexity of the isomorphism relation for a broad spectrum of natural classes of c.e.\ algebras, including finitely generated commutative semigroups, monoids, and groups (see Figure \ref{fig:below=ce}). To further refine our classification tools, we extend classical benchmark relations (such as $\emin$) using ordinals, yielding new benchmarks to elucidate the fine-grained complexity distinctions within these hierarchies.

    \subsection{Organization of this paper}	
In Section \ref{sec: background}, we provide the necessary background on universal algebra, c.e.\ presentations, isomorphism relations, computable reducibility, and standard benchmark equivalence relations on c.e.\ sets. Section \ref{sec: isorelations and emin} introduces a range of natural examples whose isomorphism relations fall into the degrees $\emin(\omega \cdot n)$ for $n \geq 1$. In Section \ref{sec:below_eqce}, we undertake a more systematic investigation of isomorphism relations below $\eqce$, proving several general results---including Theorem \ref{acc_upper_bound}---and extending the classification of natural examples initiated in Section \ref{sec: isorelations and emin}. Section \ref{sec:above_eqce} turns to isomorphism relations above $\eqce$, where we present a new example of a $\Sigma^0_3$-complete isomorphism relation. Finally, Section \ref{sec:questions} outlines a collection of open questions and proposes directions for future research.

\section{Basic background}\label{sec: background}
	\subsection{Computability Theory}
    We assume familiarity with the basic notions from computability theory (found, e.g.,\ in \cite{odifreddi1992classical}). Our notation is standard; most importantly, $\nat$ is the set of natural numbers, 
    $\varphi_e$ is the $e$-th partial computable function, $W_e = \{x \in \nat\mid  \varphi_e(x)\downarrow\}$ is the $e$-th c.e.\ set, and $W_{e,s}$ is the restriction of $W_e$ to elements enumerated before stage $s$. We denote the Turing jump of a set $X$ by $X^\prime$.

	\subsection{Universal algebra}
	We very briefly introduce some notions from universal algebra that we need. A more thorough description can be found, e.g., in \cite{gratzer2008universal}.
    
		An \emph{algebraic signature} is a family of function symbols $(f_i)_{i \in I}$, together with a family of natural numbers $(n_i)_{i \in I}$, called \emph{arities}, for some index set $I$. An \emph{algebra} over such a signature is given by a tuple $\mathcal{A} = (A,(f_i^\mathcal{A})_{i \in I})$, where $A$ is a set and $(f_i^\mathcal{A})_{i \in I}$ a family of functions $f_i^\mathcal{A}: A^{n_i} \to A$. We call $f_i$ a \emph{constant} if $n_i = 0$.

    We will focus only on algebras where the index set $I$ is decidable and the arity function that maps $i \in I$ to $n_i$ is computable. Since we will only deal with algebraic signatures, we will just call them signatures.
	
		Let $\mathcal{A} = (A,(f_i^\mathcal{A})_{i \in I})$ be an algebra. An equivalence relation $E \subseteq A^2$ is called a \emph{congruence relation} if for every function symbol $f_i$ with arity $n_i$
		\[
		\forall (a_1,b_1),...,(a_n,b_n) \in E: f^\mathcal{A}_i(a_1,...,a_n) ~E~ f^\mathcal{A}_i(b_1,...,b_n)
		\]
		holds. We then call $\mathcal{A}/E := (\{[a]_E \mid  a \in A\}, (f_i^{\mathcal{A}/E})_{i \in I})$ a \emph{quotient} of $A$, where $[a]_E$ is the equivalence class of $a$ under $E$ and $f_i^{\mathcal{A}/E}([a_1]_E,...,[a_{n_i}]_E) := [f_i^\mathcal{A}(a_1,...,a_{n_i})]_E$.

	Let $\mathcal{A} = (A,(f_i^\mathcal{A})_{i \in I}), \mathcal{B} = (B,(f_i^\mathcal{B})_{i \in I})$ be two algebras over the same signature. A function $\varphi: A \to B$ is a \emph{homomorphism} if for every function symbol $f_i$ with arity $n_i$
	\[
	\forall a_{1},...,a_{n_i} \in A: \varphi(f_i^\mathcal{A}(a_1,...,a_{n_i})) = f_i^\mathcal{B}(\varphi(a_1),...,\varphi(a_{n_i}))
	\]
	holds. If $\varphi$ is surjective, we say $\B$ is a \emph{homomorphic image} of $\A$, denoted $\A \twoheadrightarrow \B$. If $\varphi$ is bijective, we say $\varphi$ is an \emph{isomorphism} and $\mathcal{A}$ is \emph{isomorphic} to $\mathcal{B}$, denoted $\mathcal{A} \cong \mathcal{B}$. 
    Given a homomorphism $\varphi: A \to B$, let $\ker(\varphi) := \{(a_1,a_2) \in A \times A \mid \varphi(a_1) = \varphi(a_2)\}$ be the \emph{kernel} of $\varphi$. It is easy to check that $\ker(\varphi)$ is a congruence relation on $A$. By the well-known first isomorphism theorem (or by the fundamental theorem on homomorphisms), if $A \twoheadrightarrow \B$, witnessed by $\varphi$, then $\B$ is isomorphic to the quotient of $\A$ by $\ker(\varphi)$. Also, it is well known that if $\B$ is a quotient of $\A$ then $\A \twoheadrightarrow \B$.

Below, we state the third isomorphism theorem, which we need when working with presentations of algebras. A proof of it can be found in most algebra textbooks (e.g.\ \cite{gratzer2008universal}, where it is called the second isomorphism theorem).
\begin{theorem}[Third Isomorphism Theorem]\label{third_isom_theorem}
	Let $\A$ be an algebra and $C_1$, $C_2$ two congruence relations on $\A$ with $C_1 \subseteq C_2$. Then $D := \{([a]_{C_1},[b]_{C_1}) \mid  a,b \in C_2\}$ is a congruence relation on $\A/C_1$ and $(\A/C_1)/D \cong \A/C_2$. Therefore, $\A/C_1 \twoheadrightarrow \A/C_2$.
\end{theorem}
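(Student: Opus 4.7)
The plan is to introduce the natural ``quotient by a finer congruence'' map and apply the first isomorphism theorem already available in the paper. Define $\pi \colon \A/C_1 \to \A/C_2$ by $\pi([a]_{C_1}) := [a]_{C_2}$. The first step is to verify that $\pi$ is well defined: if $[a]_{C_1} = [b]_{C_1}$ then $(a,b) \in C_1 \subseteq C_2$, so $[a]_{C_2} = [b]_{C_2}$. Then $\pi$ is clearly surjective (every class of $C_2$ is hit by the $C_1$-class of any of its representatives) and it is a homomorphism because the operations on $\A/C_1$ and $\A/C_2$ are both defined by applying the operation to representatives: for each $f_i$ of arity $n_i$,
\[
\pi(f_i^{\A/C_1}([a_1]_{C_1},\dots,[a_{n_i}]_{C_1})) = [f_i^{\A}(a_1,\dots,a_{n_i})]_{C_2} = f_i^{\A/C_2}(\pi([a_1]_{C_1}),\dots,\pi([a_{n_i}]_{C_1})).
\]
Hence $\A/C_1 \twoheadrightarrow \A/C_2$, which gives the ``Therefore'' clause directly.

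Next I would compute the kernel of $\pi$. By definition, $([a]_{C_1},[b]_{C_1}) \in \ker(\pi)$ iff $[a]_{C_2} = [b]_{C_2}$ iff $(a,b) \in C_2$, which is exactly the set $D$ described in the statement (after a small sanity check that the defining condition ``$a,b \in C_2$'' in the statement is to be read as $(a,b) \in C_2$, matching the usage of ``$a_i\, E\, b_i$'' elsewhere in the preamble). Thus $D = \ker(\pi)$, and since the kernel of any homomorphism is a congruence, $D$ is automatically a congruence on $\A/C_1$, bypassing any direct verification that $D$ respects the operations.

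Finally, the first isomorphism theorem (stated just before the theorem, as an easy consequence of the fundamental theorem on homomorphisms) applied to the surjective homomorphism $\pi$ yields $(\A/C_1)/\ker(\pi) \cong \A/C_2$, i.e.\ $(\A/C_1)/D \cong \A/C_2$, completing the proof.

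I expect no real obstacle here: the only subtle point is recognising that defining $D$ as ``$\{([a]_{C_1},[b]_{C_1}) : (a,b) \in C_2\}$'' is well posed precisely because $C_1 \subseteq C_2$ (otherwise the set would depend on the choice of representatives); this is exactly the same containment hypothesis that makes $\pi$ well defined, so the two checks collapse into one. Once $\pi$ is set up, everything else is a direct appeal to results already available in the excerpt.
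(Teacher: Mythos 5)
Your proposal is correct. Note that the paper does not actually prove this theorem---it cites it to a standard textbook (Gr\"atzer)---so there is no in-paper argument to compare against; your route via the natural projection $\pi\colon \A/C_1 \to \A/C_2$, $[a]_{C_1}\mapsto [a]_{C_2}$, together with the first isomorphism theorem stated just above it in the paper, is exactly the standard textbook proof, and all the steps (well-definedness from $C_1\subseteq C_2$, surjectivity, the homomorphism computation, $\ker(\pi)=D$, and the observation that kernels are congruences) check out. Your reading of the typo ``$a,b\in C_2$'' as ``$(a,b)\in C_2$'' is also the intended one.
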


	For a given signature $\sigma = (f_i)_{i \in I}$, the \emph{term algebra} $\T_\sigma(X)$ over the set of variables $X$ has as its domain the set of all $\sigma$-terms over $X$, denoted $T_\sigma(X)$, and functions are defined as follows:
    \[
    f^{\T_\sigma(X)}_i: T_\sigma(X)^{n_i} \to T_\sigma(X), (t_1,..., t_{n_i}) \mapsto f_i(t_1,...,t_{n_i}).
    \]
    If $\sigma$ is clear from the context, we will often omit it. 

		An \emph{equational law} is a pair of terms $(s,t)$, also written $s \approx t$, over some signature $\sigma$ and with free variables. 	
		Let $\sigma$ be a signature and let $\Lambda$ be a set of equational laws over $\sigma$. Then the class
		\[
		\mathcal{V} = \{\mathcal{A}\mid   \mathcal{A} \text{~is an algebra over $\sigma$ and ~} \forall (s,t) \in \Lambda: \mathcal{A} \vDash s = t\}
		\]
		is called an \emph{variety} or \emph{equational class}. By Birkhoff's HSP theorem, this is equivalent to saying that $\mathcal V$ is closed under taking homomorphic images, subalgebras, and products.

	\subsection{C.e.\ presentations}\label{sec:ce_pres}
    In universal algebra, a \emph{presentation} (in a fixed signature) is a generating set $X$ and a set $R$ of equations of terms over the generators, written $\langle X, R \rangle$. Let $\approx_R$ be the congruence relation \emph{generated by} $R$, i.e.\ $\approx_R$ is the smallest congruence relation containing $R$ (or equivalently, the binary relation containing $R$ and all equations that can be derived from $R$ through symmetry, transitivity, and function application). An algebra is \emph{presented by} $\langle X, R \rangle$ if it is isomorphic to $\T(X)/\approx_R$. To define presentations in a fixed variety (e.g.,\ group presentations), one can have $\approx_R$ be generated by $R$ together with the equational laws of the variety. In this paper, since we will deal with different varieties, we will write $\langle X, R\rangle_\V{}$ for a presentation in a variety $\V{}$.

\smallskip

    The following definition can be found in e.g.\ \cite{delle2020word}:
    \begin{definition}[\cite{delle2020word}]\label{ce_pres_in_varieties}
			A presentation $\langle X,R \rangle_\V{}$ is a \emph{c.e.\ presentation} if 
            \begin{itemize}
                \item $\mathcal{V}$ has a c.e.\ set of equational laws,
                \item $X$ is decidable, and
                \item $R$ is c.e.
            \end{itemize}  
		\end{definition}

        This notion agrees with the common definition of c.e.\ algebras (by e.g.\ \cite{khoussainov2018journey}) in that c.e.\ algebras are exactly the algebras that have a c.e.\ presentation. So to keep things simple, for this paper, we define a \emph{c.e.\ algebra} as an algebra that has a c.e.\ presentation.

    For our study of isomorphism relations on classes of c.e.\ algebras, given by c.e.\ presentation, we need to be able to encode presentations as natural numbers.

    First, observe that, with a signature and a generating set given, one can fix a bijective encoding of pairs of terms over $X$ into natural numbers. Therefore, we can think of the set of identities in a presentation as a set of natural numbers encoding identities and just write $W_e$ for the set of identities encoded by $W_e$. So within a variety, we can identify a presentation by the natural number $e$ \emph{if} the generating set is fixed.

    As the set of generators must by definition be decidable, it can (up to renaming) only be one of $X_\nat:= \{x_n \mid n \in \nat\}$ and $X_n:= \{x_0,...,x_{n-1}\}$ for $n \in \nat$. So we can immediately define an indexing of all c.e.\ presentation for the class of c.e.\ algebras in a variety and for the class of $n$-generated c.e.\ algebras in a variety (note that requiring the algebras to be presented using exactly $n$ generators is not a restriction since a presentation with a smaller generating set can always be simulated by collapsing the superfluous generators).

    \begin{remark}
If the set of pairs of terms is finite (this can happen if there are no function symbols of arity $\geq 1$) one can fix an encoding into an initial segment of the natural numbers. Then, for a given c.e.\ set $W_e$, some elements in $W_e$ may not encode any identity. In this paper, such a variety only occurs in Proposition \ref{least_isomorphism_relation}.
	\end{remark}
	\begin{definition}
		 Let $\V{}$ be a variety. Then we define \[\mathcal{V}[e]:= \langle X_\nat \mid  W_e\rangle_\mathcal{V}\] and for any $n \geq 1$ \[\mathcal{V}_n[e]:= \langle X_n \mid  W_e\rangle_\mathcal{V}.\]

	\end{definition}

	For the class of finitely generated algebras of a variety, we cannot fix the set of generators; however, we can still encode presentations in a natural way.
	
	\begin{definition}
    Let $\V{}$ be a variety. Then we define
		\[
		\mathcal{V}_{f.g.}[\langle k, e\rangle] = \langle X_{k} \mid  W_e \rangle_\mathcal{V}
		\]
	\end{definition}
    We will also use $\V{n}$ to refer to the class of $n$-generated c.e.\ algebras from $\V{}$ and similarly $\V{\fg}$ for the class of finitely generated c.e.\ algebras from $\V{}$. In other words, $\V{n} = \{\V{n}[e] \mid e \in \omega\}$ and $\V{\fg} = \{\V{\fg}[\langle k,e\rangle] \mid k,e \in \omega\}$.
    
    Note that in the above definition, we excluded $0$-generated algebras, since two presentations from such a class can only be isomorphic through the identity mapping. That is because the term algebra is generated by the constants and isomorphisms map a constant to itself. Furthermore, when we talk about finitely generated structures, i.e.\ $\V{n}$ and $\V{\fg}$ for some variety $\V{}$, we will only consider varieties over a finite type. Otherwise, we could have types with infinitely many constants, which defeats the purpose of restricting to finite generation.

    Here are some simple but important examples for presentations in a given class, namely the empty and the full set of identities.
    \begin{example}
        Let $e_\emptyset$ be an index for the empty set. Then $\V{n}[e_\emptyset] = T(X_n)/\approx_\emptyset$, where $\approx_\emptyset$ contains only the identities of the variety. That is, $\V{n}[e_\emptyset]$ is the free algebra in $\V{}$ over $n$ variables. Also, notice that if we take instead of $\emptyset$, any other subset of the congruence generated by the identities of the variety, we still get the free algebra.
        
        At the opposite end, let $e_\omega$ be an index for $\omega$. Then $\V{n}[e_\omega] = T(X_n)/\approx_\omega$, where $\approx_\omega$ is the full congruence relation. That is, $\V{n}[e_\omega]$ is trivial, i.e.\ of size $1$.
    \end{example}
	
	When working with presentations, the following two lemmas turn out to be useful. They show that adding identities to a (not necessarily c.e.) presentation corresponds to quotienting and also, in a certain sense, vice versa. They formalize a fundamental intuition --- therefore, we will later often use them but without mentioning them explicitly.
	\begin{lemma}\label{adding_equations}
		Let $\V{}$ be a variety, $X$ a set, and $R, S$ sets of equations of terms over $X$, with $R \subseteq S$. Then $\langle X \mid  R\rangle_\V{} \twoheadrightarrow \langle X \mid  S\rangle_\V{}$.

	\end{lemma}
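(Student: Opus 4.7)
The plan is to reduce this immediately to the Third Isomorphism Theorem (Theorem~\ref{third_isom_theorem}) applied to the term algebra $\T(X)$. By definition, $\langle X \mid R\rangle_\V{}$ is (isomorphic to) $\T(X)/\approx_R$, where $\approx_R$ denotes the smallest congruence on $\T(X)$ containing $R$ together with all equational laws of $\V{}$; similarly for $\approx_S$.

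The first step is to observe that $\approx_R \subseteq \approx_S$ as subsets of $\T(X) \times \T(X)$. Indeed, $\approx_S$ is by construction a congruence on $\T(X)$ that contains the equational laws of $\V{}$ and contains $S$, hence in particular contains $R$. Since $\approx_R$ is the \emph{smallest} such congruence, $\approx_R \subseteq \approx_S$.

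The second step is to apply Theorem~\ref{third_isom_theorem} with $\A := \T(X)$, $C_1 := \approx_R$, and $C_2 := \approx_S$. The theorem yields $\T(X)/\approx_R \twoheadrightarrow \T(X)/\approx_S$, which, after unwinding the definitions, is exactly $\langle X \mid R\rangle_\V{} \twoheadrightarrow \langle X \mid S\rangle_\V{}$.

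There is no real obstacle here: the only substantive point is the inclusion of congruences in the first step, which is a one-line consequence of the minimality in the definition of $\approx_R$. The rest is a direct invocation of the Third Isomorphism Theorem, which justifies the authors' remark that the lemma merely formalizes a fundamental intuition.
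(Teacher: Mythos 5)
Your proposal is correct and follows exactly the paper's own argument: the paper also proves this lemma by applying the Third Isomorphism Theorem (Theorem~\ref{third_isom_theorem}) to $\T(X)$ with the congruences $\approx_R \subseteq \approx_S$ generated by $R$ and $S$ together with the laws of $\V{}$. Your explicit minimality argument for $\approx_R \subseteq \approx_S$ is the only detail the paper leaves implicit.
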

	\begin{proof}
		This is the third isomorphism theorem (Theorem \ref{third_isom_theorem}), stated for presentations. If $\approx_S, \approx_R$ are the congruence relations induced by $S, R$ respectively, then $\T(X)/\approx_S$ is isomorphic to the quotient of $\T(X_n)/\approx_R$ by $E:= \{([s]_{\approx_R},[t]_{\approx_R})\mid  s \approx_S t\}$. 
    \end{proof}

	\begin{lemma}\label{quotient_superset}
		\begin{enumerate}
			\item Let $\V{}$ be a variety and let $\mathcal{A},\mathcal{B} \in \V{}$, s.t.\ $\A \twoheadrightarrow \B$. Then for all presentations $\langle X \mid  R \rangle_\V{}$ of $\mathcal{A}$ there exists a $S \supseteq R$ s.t.\ $\langle X \mid  S \rangle_\V{}$ is a presentation of $\mathcal{B}$. 
			\item If, furthermore, the homomorphism witnessing $\A \twoheadrightarrow \B$ is non-injective (equivalently: $\B$ is isomorphic to a non-trivial quotient of $\A$), then an $S$ exists s.t.\ the inclusion is strict, i.e.\ $S \supsetneq R$.
		\end{enumerate}

	\end{lemma}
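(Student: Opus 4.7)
My plan is to combine the given surjection $\varphi: \A \twoheadrightarrow \B$ with the presentation quotient map to obtain a surjection $\T(X) \twoheadrightarrow \B$, and then read off a suitable $S$ as its kernel, appealing to the first isomorphism theorem. Concretely, writing the presentation $\langle X \mid R\rangle_\V{}$ of $\A$ as $\A \cong \T(X)/\approx_R$ with quotient map $q: \T(X) \twoheadrightarrow \A$, I would form the composite $\pi := \varphi \circ q: \T(X) \twoheadrightarrow \B$. The first isomorphism theorem then gives $\B \cong \T(X)/\ker(\pi)$.

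I would then take $S := \ker(\pi)$, viewed as a set of equations of terms over $X$. The containment $R \subseteq S$ follows from the chain $R \subseteq \approx_R = \ker(q) \subseteq \ker(\varphi \circ q) = S$. To verify that $\langle X \mid S \rangle_\V{}$ indeed presents $\B$, I would observe that $S = \ker(\pi)$ is already a congruence on $\T(X)$, and it contains every substitution instance of the equational laws of $\V{}$ because $\B \in \V{}$ satisfies these laws and $\pi$ is a homomorphism into $\B$. Hence the congruence $\approx_S$ generated by $S$ together with the laws of $\V{}$ collapses to $S$ itself, and $\T(X)/\approx_S = \T(X)/\ker(\pi) \cong \B$, as desired.

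For part (2), if $\varphi$ is non-injective, then $\ker(\pi) \supsetneq \ker(q) = \approx_R$, so there is a pair $(s,t) \in \ker(\pi)\setminus \approx_R$. Since $R \subseteq \approx_R$, this pair lies in $S \setminus R$, witnessing $S \supsetneq R$.

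I do not foresee a real obstacle; the argument is a direct application of the first and third isomorphism theorems. The only minor technical point is the identification $\approx_S = \ker(\pi)$ when $S = \ker(\pi)$, which holds precisely because the variety's laws are automatically contained in any congruence arising as the kernel of a homomorphism into an algebra of $\V{}$.
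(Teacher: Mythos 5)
Your proof is correct, and the set you construct is in fact the same one the paper uses: your $S=\ker(\varphi\circ q)$ coincides with the paper's $S=\{(s,t)\mid [s]_{\approx_R}\approx[t]_{\approx_R}\}$, since the congruence $\approx$ there is just $\ker(\varphi)$ transported to $\T(X)/\approx_R$. The difference is in how the verification is organized. The paper checks by hand that $S$ is a congruence on $\T(X)$ (including the compatibility condition) and then exhibits an explicit isomorphism $\T(X)/S\cong(\T(X)/\approx_R)/\approx$, essentially reproving an instance of the third isomorphism theorem; you instead apply the first isomorphism theorem once to the composite surjection $\pi=\varphi\circ q$, which gives both facts for free and makes the argument shorter. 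You also make explicit a point the paper leaves implicit: that $\langle X\mid S\rangle_{\V{}}$ really presents $\B$ requires $\approx_S=S$, which holds because $\ker(\pi)$ is a congruence already containing all substitution instances of the laws of $\V{}$ (in the paper this is hidden in $S\supseteq\approx_R$ together with the direct congruence check). Your part (2) is likewise correct and matches the paper's, modulo the routine remark that strictness of $\ker(\pi)\supsetneq\ker(q)$ uses surjectivity of $q$ to lift a non-collapsed pair of $\A$ to terms. In short: same construction, with a cleaner verification via kernels that trades the paper's explicit computations for a single appeal to the first isomorphism theorem.
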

	\begin{proof}
		\begin{enumerate}
			\item Let $\langle X \mid  R \rangle$ be a presentation of $\mathcal{A}$ and let $\approx_R$ be the congruence relation generated by $R$ and the identities of the variety. Therefore $\mathcal{A} \cong \T(X)/\approx_R$. Since $\mathcal{B}$ is a homomorphic image of $\mathcal{A}$, there exists a congruence relation $\approx$, s.t. \[\mathcal{B} \cong (\T(X)/\approx_R)/\approx\]
			We define $S:= \{(s,t) \in \T(X) \times \T(X)\mid  [s]_{\approx_R} \approx [t]_{\approx_R}\}$. By reflexivity of $\approx$, $S \supseteq \approx_R$ and therefore also $S \supseteq R$. Furthermore, $S$ is a congruence relation on $\T(X)$: reflexivity, symmetry, and transitivity are easy to check and to show compatibility, observe that for any $n$ and any $n$-ary function symbol $f$,
			\begin{align*}
				&s_1~S~t_1, ...,s_n~S~t_n \implies [s_1]_{\approx_R} \approx [t_1]_{\approx_R}, ...,[s_n]_{\approx_R} \approx [t_n]_{\approx_R} \implies \\ &\implies f^{\T(X)/\approx_R}([s_1]_{\approx_R},..., [s_n]_{\approx_R}) \approx f^{\T(X)/\approx_R}([t_1]_{\approx_R},..., [t_n]_{\approx_R}) \implies \\
				&\implies [f(s_1,...,s_n)]_{\approx_R} \approx [f(t_1,...,t_n)]_{\approx_R} \implies f(s_1,...,s_n) ~S~ f(t_1,...,t_n).
			\end{align*}

			It remains to be shown that $\T(X)/S$ is isomorphic to  $(\T(X)/\approx_R)/\approx$ and therefore isomorphic to $\mathcal{B}$. 
			
			We show that $\varphi: \T(X)/S \to (\T(X)/\approx_R)/\approx,[t]_{S} \mapsto [[t]_{\approx_R}]_{\approx}$ is an isomorphism. 
			\begin{itemize}
				\item well-defined and injective: \begin{align*}
				    [s]_{S} = [t]_{S} \iff s S t \iff [s]_{\approx_R} \approx [t]_{\approx_R} \iff [[s]_{\approx_R}]_{\approx} = [[t]_{\approx_R}]_{\approx}.
                    \end{align*}
				\item surjective: follows immediately from the definition.
				\item homomorphism: 
                \begin{align*}
                                    &\varphi(f^{\T(X)/S}([s_1]_{S},...,[s_n]_{S})) = \varphi([f(s_1,...,s_n)]_{S}) = [[f(s_1,...,s_n)]_{\approx_R}]_{\approx} \\
                                    =& [f^{\T(X)/\approx_R}([s_1]_{\approx_R},..., [s_n]_{\approx_R})]_{\approx}= f^{(\T(X)/\approx_R)/\approx}([[s_1]_{\approx_R}]_{\approx},..., [[s_n]_{\approx_R}]_{\approx}) \\
                                    =& f^{(\T(X)/\approx_R)/\approx}(\varphi([s_1]_{S}),...,\varphi([s_n]_{S})).
                                                    \end{align*}

			\end{itemize}

			\item In the proof above, note that if $\approx$ is non-trivial, there exist $s,t \in \T(X)$ s.t. $[s]_{\approx_R} \neq [t]_{\approx_R}$ and $[s]_{\approx_R} \approx [t]_{\approx_R}$. Therefore, $(s,t) \in S$, but $(s,t) \notin \approx_R$ and thus also $(s,t) \notin R$.
		\end{enumerate}
	\end{proof}

    At one point in this paper (Proposition \ref{hom_open_implies_not_below_eqce}), we also need an ``effective" version of Lemma \ref{quotient_superset} for finitely generated presentations. This one is a bit less obvious. 
    \begin{lemma}\label{effective_quotient_superset}
        Let $\V{}$ be a variety and let $\mathcal{A},\mathcal{B} \in \V{\fg}$ be c.e.\ algebras s.t.\ $\A \twoheadrightarrow \B$. Then for all c.e.\ presentations $\langle X \mid W_e \rangle_\V{}$, with $X$ finite, of $\mathcal{A}$ there exists a c.e.\ set $W_i \supseteq W_e$ s.t.\ $\langle X \mid  W_i \rangle_\V{}$ is a presentation of $\mathcal{B}$. 
    \end{lemma}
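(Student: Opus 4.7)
The plan is to construct a c.e.\ presentation of $\B$ over the \emph{same} generating set $X$ as $\A$, and then observe that $W_e$ is automatically contained in the corresponding congruence because $\A \twoheadrightarrow \B$ factors through it. The c.e.-ness of the new presentation will come from pulling back \emph{any} c.e.\ presentation of $\B$ along a suitable computable term substitution.

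Concretely, since $\B \in \V{\fg}$ is c.e., I would first fix any c.e.\ presentation $\langle Y \mid W_k \rangle_\V{}$ of $\B$ with $Y$ finite, and let ${\approx_k} \subseteq T(Y)^2$ be the c.e.\ congruence on $T(Y)$ generated by $W_k$ together with the laws of $\V{}$. Let $\varphi : \A \twoheadrightarrow \B$ denote the given surjection. Because $\varphi(X^\A)$ generates $\B$, for each $x \in X$ I (non-effectively) pick a term $s_x \in T(Y)$ with $s_x^\B = \varphi(x^\A)$; extending by compositionality yields a computable substitution $\delta : T(X) \to T(Y)$ sending $x \mapsto s_x$.

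The candidate set is then
\[
W_i := \{(p,q) \in T(X)^2 : (\delta(p), \delta(q)) \in {\approx_k}\},
\]
which is c.e.\ because $\delta$ is computable and $\approx_k$ is c.e. The remaining task is to verify that $W_i$ is exactly the kernel of the homomorphism $\beta : \T(X) \twoheadrightarrow \B$ sending $x \mapsto \varphi(x^\A)$: by construction $\beta(p) = \delta(p)^\B$, so $\beta(p) = \beta(q)$ iff $\delta(p) \approx_k \delta(q)$, i.e., $(p,q) \in W_i$. Surjectivity of $\beta$ follows from $\varphi(X^\A)$ generating $\B$, and $\ker(\beta)$ automatically contains the equational laws of $\V{}$; hence $\langle X \mid W_i \rangle_\V{}$ presents $\B$. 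The containment $W_e \subseteq W_i$ is then immediate: for $(p,q) \in W_e$, $p^\A = q^\A$ gives $\beta(p) = \varphi(p^\A) = \varphi(q^\A) = \beta(q)$, so $(p,q) \in \ker(\beta) = W_i$.

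The main (and essentially only) obstacle is being comfortable that the choice of the terms $s_x$ is non-effective. This is acceptable because the lemma only asserts \emph{existence} of $W_i$, not that its index is uniformly computable from the inputs; once the $s_x$'s have been fixed, the rest of the argument reduces to composing a c.e.\ enumeration with a computable substitution plus a standard universal-property check.
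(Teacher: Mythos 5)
Your proof is correct and takes essentially the same route as the paper: both realize the composite surjection $\T(X)\twoheadrightarrow\B$ via a computable term substitution fixed by finitely much non-uniform data, and take $W_i$ to be its kernel, which is c.e.\ because it reduces to the c.e.\ word problem of a c.e.\ presentation of $\B$ and contains $W_e$ because the map factors through $\A$. The only cosmetic difference is that the paper first re-presents $\B$ over the same generating set $X$ and views the surjection as induced by a generator mapping, whereas you substitute into an arbitrary finite c.e.\ presentation $\langle Y\mid W_k\rangle_\V{}$ of $\B$.
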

    \begin{proof}
        Let $\langle X,W_j\rangle_\V{}$ be a presentation of $\B$, which we know exists because $\B$ is in $\V{\fg}$ and $\B$, as a homomorphic image of $\A$, cannot have more generators than $\A$. Let $\approx_{W_e}, \approx_{W_j}$ be the congruence relations generated by $W_e,W_j$, respectively. Observe that a surjective homomorphism $\varphi$ witnessing $\T(X)/\approx_{W_e} \twoheadrightarrow \T(X)/\approx_{W_j}$ is induced by a mapping of finitely many generators. The same mapping also induces a surjective homomorphism from $\T(X)$ to $\T(X)/\approx_{W_j}$, let us call it $\varphi^\prime$. For a term $t \in \T(X)$, finding a representative term of $\varphi^\prime(t)$ is computable, since it only requires the finite information of where generators are mapped. Therefore, $\ker(\varphi^\prime)$ is c.e.\ since we can, for each pair of terms, follow $\varphi^\prime$ and then solve the c.e.\ word problem of $\langle X, W_j\rangle_\V{}$. But now notice that $S:= \{(s,t) \in \T(X) \times \T(X)\mid [s]_{\approx_{W_e}} \approx_{\ker(\varphi)} [t]_{\approx_{W_e}}\}$ is equivalent to $\ker(\varphi^\prime)$. And we have shown in the proof of Lemma \ref{quotient_superset} that $S$ is a congruence relation and that $\T(X)/S$ is isomorphic to $\B$. Thus, for an index $i$ of the c.e.\ set $\ker(\varphi^\prime) = S \supseteq W_e$, $\langle X \mid  W_i \rangle_\V{}$ is a presentation of $\mathcal{B}$.
    \end{proof}

	\subsection{Isomorphism relations on c.e.\ presentations}
    In algebra, the isomorphism problem for a given variety $\V{}$, which we denote by $\IP(\V{})$, refers to the isomorphism problem for finite presentations in $\V{}$. 
    \begin{observation}[Folklore]\label{fp_sigma_1}
        For any variety $\V{}$, $\IP(\V{})$ is $\Sigma^0_1$.
    \end{observation}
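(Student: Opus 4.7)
The plan is to semi-decide $\IP(\V{})$ by a straightforward search-and-verify procedure. Given two finite presentations $\langle X_1, R_1\rangle_\V{}$ and $\langle X_2, R_2\rangle_\V{}$, I would search for a witness of isomorphism consisting of two maps $\varphi_0: X_1 \to T(X_2)$ and $\psi_0: X_2 \to T(X_1)$. These extend canonically to term-algebra homomorphisms $\varphi:\T(X_1)\to\T(X_2)$ and $\psi:\T(X_2)\to\T(X_1)$, and the crucial point is that because $X_1$ and $X_2$ are finite, this encodes a candidate isomorphism pair by a single natural number, so the outer quantifier over candidates is an unbounded computable search.

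The key step is verifying that a given candidate pair actually induces a mutually inverse pair of isomorphisms between $\T(X_1)/\!\approx_{R_1}$ and $\T(X_2)/\!\approx_{R_2}$. By the first and third isomorphism theorems, this amounts to checking the following finitely many equalities in the respective quotients: (a) $\varphi(s) \approx_{R_2} \varphi(t)$ for each $(s,t)\in R_1$, ensuring $\varphi$ descends; (b) the symmetric condition for $\psi$ and $R_2$; (c) $\psi(\varphi(x)) \approx_{R_1} x$ for each $x \in X_1$; and (d) $\varphi(\psi(y)) \approx_{R_2} y$ for each $y \in X_2$. Since $\V{}$ has a c.e.\ set of equational laws and $R_1, R_2$ are finite, each of the congruences $\approx_{R_i}$ is c.e.---equality can be semi-decided by enumerating finite derivations using the laws of $\V{}$ and the finitely many given relations. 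A finite conjunction of c.e.\ predicates is c.e., so the verification is $\Sigma^0_1$.

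Combining the two, the whole procedure is a $\Sigma^0_1$ search: there exists a pair $(\varphi_0,\psi_0)$ and a stage $s$ at which all required equalities are witnessed by derivations of length at most $s$. I do not anticipate a genuine obstacle here; the only point requiring a moment of care is that an isomorphism of the quotient algebras need not be bounded-size \emph{a priori}, but it is fully determined by the images of the finitely many generators, so the search space is effectively enumerable. This is precisely the classical argument that makes $\IP$ for finite presentations lie in $\Sigma^0_1$ for any variety with a c.e.\ axiomatization, and it is what is alluded to by the ``folklore'' attribution.
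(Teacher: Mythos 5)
Your proposal is correct and is essentially the paper's own argument: you search for a pair of generator maps and verify exactly the same four conditions (relations preserved in both directions, mutual inverses on generators), with each check semi-decided via the c.e.\ word problems $\approx_{R_1}$, $\approx_{R_2}$, so the whole relation is $\Sigma^0_1$. The only cosmetic difference is that you invoke the isomorphism theorems where the paper verifies directly that the extended maps descend to mutually inverse homomorphisms of the quotients.
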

    \begin{proof}
        Let $\langle X \mid R\rangle$ and $\langle Y \mid S\rangle$ be finite presentations of algebras in  $\V{}$. Let $\approx_R$ and $\approx_S$ be the congruence relations generated by $R$ and $S$, respectively (together with the equational laws of $\V{}$).
        We claim that $\T(X)/\approx_R \cong \T(Y)/\approx_S$ iff
        there exist mappings $\varphi: X \to \T(Y)$ and $\psi: Y \to \T(X)$, s.t.\ the following conditions hold:
        \begin{align}
            \bigwedge_{x \in X} \psi^\prime(\varphi(x)) \approx_R x,\\
            \bigwedge_{y \in Y} \varphi^\prime(\psi(y)) \approx_S y,\\
            \bigwedge_{(r_1,r_2) \in R} \varphi^\prime(r_1) \approx_S \varphi^\prime(r_2),\\
            \bigwedge_{(s_1,s_2) \in S} \psi^\prime(s_1) \approx_R \psi^\prime(s_2),
        \end{align}
    where $\varphi^\prime, \psi^\prime$ are the extensions of $\varphi,\psi$ to $\T(X), \T(Y)$, defined by (analogous for $\psi^\prime$)
    \begin{itemize}
        \item $\varphi^\prime(x) = \varphi(x)$ for $x \in X$,
        \item $\varphi^\prime(f(t_1,...,t_n)) = f(\varphi^\prime(t_1),...,\varphi^\prime(t_n))$, for terms $t_1,...,t_n \in \T(X)$ and function symbols $f$ of arity $n$ in the signature of $\V{}$.
    \end{itemize}
    
    ``$\implies$"-direction of the claim: Let $\phi$ be an isomorphism from $\T(X)/\approx_R$ to $\T(Y)/\approx_S$. It is immediate that $\phi\restriction X$ and $\phi^{-1}\restriction Y$ satisfy conditions (1)-(4).
    
    ``$\impliedby$"-direction of the claim: We show that $\varphi^\prime$ is a homomorphism from $\T(X)/\approx_R$ to $\T(Y)/\approx_S$, and $\psi^\prime$ is a homomorphism from $\T(Y)/\approx_S$ to  $\T(X)/\approx_R$, and they are each other's inverse functions. From that it follows immediately that $\varphi^\prime$ is also an isomorphism. In order to show that $\varphi^\prime$ (and analogously $\psi^\prime$) is a homomorphism, it is sufficient to check well-definedness on the equivalence classes of $\approx_R$. The homomorphism condition then follows from the definition of $\varphi^\prime$. So assume that for terms $t_1,t_2 \in T(X)$, $t_1 \approx_R t_2$. This equivalence can be derived from a set of equations $\hat{R} \subseteq R$ through symmetry, transitivity, and function application. By condition (3), for each $(r_1,r_2) \in \hat{R}$, it holds that $\varphi^\prime(r_1) \approx_S \varphi^\prime(r_2)$. Thus, we are also able to derive $\varphi^\prime(t_1) \approx_S \varphi^\prime(t_2)$ by replacing in the derivation of $t_1 \approx_R t_2$ every occuring term by its image under $\varphi^\prime$. This shows that $\varphi^\prime$ is well-defined. The fact that $\psi^\prime$ is the inverse of $\varphi^\prime$ (and analogously vice versa) can be shown by a simple induction on terms $t \in \T(X)$. The base case $t = x \in X$ follows from condition (1). For the step case $t = f(t_1,...,t_n)$, observe that $\psi^\prime(\varphi^\prime(f(t_1,...,t_n))) = f(\psi^\prime(\varphi^\prime(t_1)),...,\psi^\prime(\varphi^\prime(t_n)))$ which by induction hypothesis is $\approx_R$-equivalent to $f(t_1,...,t_n)$. This completes the proof of the claim.

    Finally, checking conditions (1)-(4) is $\Sigma^0_1$, because checking membership in $\approx_R, \approx_S$, i.e.\ the word problem, is $\Sigma^0_1$ (one can computably enumerate all equations that can be derived from the equations in $R$ or $S$). The mappings $\varphi$ and $\psi$ are finite objects, so asking whether they exist is just another existential quantifier. Therefore, the existence of an isomorphism is a $\Sigma^0_1$-question.
    \end{proof}

Moving to isomorphisms between c.e.\ presentations, we define isomorphism relations on the classes we introduced in Section \ref{sec:ce_pres}.
	\begin{definition}
		Let $\mathcal{V}$ be a variety and let $e, i \in \omega$. We define the corresponding \emph{isomorphism relations} on $\mathcal{V}$, $\mathcal{V}_n$, and $\mathcal{V}_{f.g.}$ as follows:
		\[
		e \iV{} i :\iff \mathcal{V}[e] \cong \mathcal{V}[i],
		\]
		\[
		e \iV{n} i :\iff \mathcal{V}_n[e] \cong \mathcal{V}_n[i], \text{ and}
		\]
		\[
		e \iV{\fg} i :\iff \mathcal{V}_{f.g.}[e] \cong \mathcal{V}_{f.g.}[i]
		\]

	\end{definition}

	Note that any such isomorphism relation is an equivalence relation on $\omega$. Also, each of them is a \emph{quotient} of $\eqce$, meaning that all equivalences from $\eqce$ hold (i.e.\ it is a superset of $\eqce$). 

    Analogous to Observation \ref{fp_sigma_1}, observe that for finitely generated algebras, the isomorphism relation can be at most $\Sigma^0_3$.
    \begin{observation}[Folklore]\label{fg_sigma_3}
        For any variety $\V{}$, $\iV{\fg}$ is $\Sigma^0_3$. 
    \end{observation}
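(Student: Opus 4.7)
The plan is to adapt the argument of Observation~\ref{fp_sigma_1} to the c.e.\ setting, tracking how the arithmetic complexity rises when the defining relations are c.e.\ rather than finite, while the generating sets remain finite. The combinatorial characterization established there---that $\langle X \mid R\rangle_\V{} \cong \langle Y \mid S\rangle_\V{}$ iff there exist mappings $\varphi: X \to \T(Y)$ and $\psi: Y \to \T(X)$ satisfying conditions (1)--(4)---goes through verbatim when $R$ and $S$ are c.e., since its proof relied only on finiteness of the generating sets (to encode a homomorphism by its action on generators) and on the fact that any individual equivalence $t_1 \approx_R t_2$ has a finite derivation using finitely many elements of $R$, which remains true in the c.e.\ case. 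So for $\V{\fg}[\langle k, e \rangle]$ and $\V{\fg}[\langle l, i \rangle]$, isomorphism is equivalent to the existence of finite mappings $\varphi: X_k \to \T(X_l)$ and $\psi: X_l \to \T(X_k)$ satisfying (1)--(4) with $R, S$ replaced by $W_e, W_i$; since $X_k, X_l$ are finite, the outer quantifier ``there exist $\varphi, \psi$'' is a single existential over a natural number coding the pair.

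Next, I would analyze the complexity of each condition, recalling that the word problem of any c.e.\ presentation in a variety with c.e.\ laws is $\Sigma^0_1$, since one can computably enumerate derivable equations. Conditions (1) and (2), being finite conjunctions of word-problem statements (indexed by the finite sets $X_k$, $X_l$), are $\Sigma^0_1$. The new phenomenon lies in conditions (3) and (4): each is of the form $\forall z\, [z \in W_e \to \varphi'((z)_0) \approx_{W_i} \varphi'((z)_1)]$, where $\varphi'$ denotes the term-extension of $\varphi$ defined in Observation~\ref{fp_sigma_1}. Unpacking $z \in W_e$ as $\exists s\,(z \in W_{e,s})$ and pulling the existential through the implication produces a $\forall z \forall s \exists t\, R(z,s,t)$-formula with decidable matrix, hence $\Pi^0_2$.

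Combining, $\iV{\fg}$ has the form $\exists n\, \Pi^0_2$, which is $\Sigma^0_3$. There is no serious obstacle beyond careful bookkeeping; the only subtle point is recognizing that universal quantification of a $\Sigma^0_1$ matrix over a c.e.\ index set yields a $\Pi^0_2$ (not merely $\Pi^0_1$) predicate, and this is precisely what lifts the complexity from the $\Sigma^0_1$ bound of Observation~\ref{fp_sigma_1} up to $\Sigma^0_3$ in the c.e.\ setting.
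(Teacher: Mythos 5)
Your proposal is correct and follows essentially the same route as the paper: keep the finite-mapping characterization from Observation~\ref{fp_sigma_1}, note that conditions (1)--(2) remain $\Sigma^0_1$ while (3)--(4) become $\Pi^0_2$ once membership in the c.e.\ relation sets is unfolded, and close with a single existential quantifier over the finitely many generator images to land in $\Sigma^0_3$. Your added justification that the characterization survives the passage from finite to c.e.\ relation sets (finiteness of derivations) is exactly the point the paper leaves as ``works analogously.''
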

    \begin{proof}
        Consider any $\langle k,e\rangle$ and $\langle j, i\rangle$, coding presentations $\langle X_k \mid W_e\rangle$ and $\langle X_j \mid W_i \rangle$. As in the proof of Observation \ref{fp_sigma_1}, we can ask for the existence of the mappings $\varphi: X_k \to \T(X_j)$ and $\psi: X_j \to \T(X_k)$. Since our generating sets are still finite, we can keep conditions (1) and (2) from that proof. Conditions (3) and (4) need to be changed to the following:
        \begin{align}
            (\forall r_1,r_2) ((r_1,r_2) \in W_e \implies \varphi^\prime(r_1) \approx_{W_i} \varphi^\prime(r_2)),\tag{3}\\
            (\forall s_1,s_2) ((s_1,s_2) \in W_i \implies \psi^\prime(s_1) \approx_{W_e} \psi^\prime(s_2)).\tag{4}
        \end{align}
The verification that this characterizes the existence of an isomorphism works analogously. Conditions (3) and (4) are now $\Pi^0_2$-statements, so the existence of $\varphi$ and $\psi$ that satisfy these conditions is now a $\Sigma^0_3$-statement.
    \end{proof}

\subsection{Computable reducibility and equivalence relations on c.e.\ sets}\label{sec: comp red}
Let $E$, $F$ be equivalence relations on $\nat$. Then $E$ is \emph{computably reducible} to $F$, denoted $E \leq_c F$, if there exists a computable function $h: \nat \to \nat$, s.t.~for all $x,y \in \nat$
\[
x ~E~ y \iff h(x) ~F~ h(y)
\]
If both $E \leq_c F$ and $F \leq_c E$, then $E$ and $F$ are \emph{computably bi-reducible} (or \emph{computably equivalent}), denoted $E \equiv_c F$.

Next, we mention some important equivalence relations \emph{on c.e.\ sets}. These are equivalence relations defined on indices of c.e.\ sets, namely, if two indices refer to the same set, they are in the same equivalence class. 

\begin{definition}[\cite{coskey2012hierarchy}] For $e,i \in \nat$ we define
	\begin{itemize}
		\item $e \eqce i \iff W_e = W_i$,
		\item $e ~\emin~ i \iff \min(W_e) = \min(W_i) \text{~or~} W_e = W_i = \emptyset$,
		\item $e ~\emax~ i \iff \max(W_e) = \max(W_i), W_e = W_i = \emptyset, \text{~or~} W_e, W_i \text{~are both infinite}$,
		\item $e ~\ezero~ i \iff \text{the symmetric difference } W_e \triangle W_i \text{ is finite}$.
	\end{itemize}

\end{definition}

\begin{theorem}[\cite{coskey2012hierarchy}]\ 
	\begin{itemize}
        \item $\emin$ and $\emax$ are incomparable under computable reducibility,
		\item $\emin<_c \eqce$, $\emax <_c \eqce$, and
		\item $\eqce <_c \ezero$.
	\end{itemize}
\end{theorem}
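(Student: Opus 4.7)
The plan is to handle the clauses separately: the three positive reductions, the top strict inequality by pointclass-gap, the middle strict inequalities by a pigeonhole on $\Pi^0_2$-complete classes, and finally the two directions of incomparability.

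First I would write down direct \emph{s}-\emph{m}-\emph{n} reductions. For $\emin \leq_c \eqce$, send $e$ to an index of $\{\min W_e\}$ (the empty set if $W_e=\emptyset$): enumerate $W_e$ and, on seeing the first element, emit only it. For $\emax \leq_c \eqce$, send $e$ to an index enumerating the downward closure $\{k : \exists n \geq k,\ n \in W_e\}$, which yields $\emptyset$, the initial segment $\{0,\ldots,\max W_e\}$, or all of $\omega$ according as $W_e$ is empty, has finite maximum, or is infinite; two such downward closures are equal iff the original maxima agree. For $\eqce \leq_c \ezero$, inflate $W_e$ column-wise via $W_{h(e)} := \{\langle n,k\rangle : n \in W_e,\ k \in \omega\}$: equal c.e.\ sets produce identical images, while distinct c.e.\ sets differ by at least one full infinite column, so the symmetric difference is infinite.

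For $\eqce <_c \ezero$, observe that $\ezero$ is $\Sigma^0_3$-complete whereas $\eqce$ is $\Pi^0_2$; since the preimage of a $\Pi^0_2$ relation under a computable function is again $\Pi^0_2$, no reduction $\ezero \leq_c \eqce$ can exist. For $\emin,\emax <_c \eqce$, the crucial fact is that $\eqce$ has infinitely many $\Pi^0_2$-complete equivalence classes (for every non-computable c.e.\ set $A$, the index set $\{e : W_e = A\}$ is $\Pi^0_2$-complete). A reduction induces an injection on classes, so each $\eqce$-class would have to equal the preimage of a single target class. But preimages of $\emin$-classes under computable maps are all $\Delta^0_2$ (namely $\Pi^0_1$ for $C_\emptyset$ and $d$-c.e.\ for each $C_n$), and in $\emax$ only $D_\infty$ is $\Pi^0_2$-complete while injectivity allows it to absorb at most one $\eqce$-class—contradiction in either case.

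For incomparability, $\emax \not\leq_c \emin$ follows from the same pointclass pigeonhole: $D_\infty$ is $\Pi^0_2$-complete, whereas any $\emin$-class has $\Delta^0_2$ preimage under a computable map, so no injection on classes can place $D_\infty$ anywhere. The reverse $\emin \not\leq_c \emax$ is the subtlest direction, since the complexity-only argument merely excludes $\hat{h}(C_n) = D_\emptyset$ for $n \in \omega$ (using that $C_0$ is $\Sigma^0_1$-complete and each $C_n$ with $n \geq 1$ is $d$-c.e.-complete, both incompatible with a $\Pi^0_1$ preimage). To close this case I would apply the recursion theorem to build an index $e^*$ whose enumeration of $W_{e^*}$ watches $W_{h(e^*)}$ and exploits the asymmetry that enumeration can only lower the $\emin$-class while it can only raise the $\emax$-class, arranging that the limit $\emin$-class of $e^*$ fails to match $\hat h$ applied to the limit $\emax$-class of $h(e^*)$. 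I expect this recursion-theoretic diagonalization in the last subcase to be the main technical obstacle of the proof.
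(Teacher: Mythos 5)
Two of your steps need repair; the rest (the downward-closure reduction for $\emax\leq_c\eqce$, the column-inflation for $\eqce\leq_c\ezero$, the pointclass argument for $\eqce<_c\ezero$, and the ``$\Pi^0_2$-complete class versus $\Delta^0_2$ preimage'' pigeonhole for $\eqce\not\leq_c\emin,\emax$ and $\emax\not\leq_c\emin$) is correct. Note that the paper itself proves none of this: the theorem is quoted from Coskey--Hamkins--Miller, so there is no in-paper proof to match. The first problem is your reduction witnessing $\emin\leq_c\eqce$: the first element enumerated into $W_e$ need not be $\min W_e$, and the set $\{\min W_e\}$ is not uniformly c.e.\ --- once you emit a candidate you cannot retract it, and if you never commit you output $\emptyset$ for a nonempty input. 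The fix is the exact dual of your own $\emax$ trick: send $W_e$ to its upward closure $\{k : (\exists n\leq k)\, n\in W_e\}$, which is uniformly c.e.\ and equals $\{k: k\geq \min W_e\}$, so two images coincide iff the minima coincide (or both sets are empty).

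The second and more serious gap is $\emin\not\leq_c\emax$, which you only announce as a recursion-theoretic diagonalization ``exploiting the asymmetry.'' The asymmetry alone does not close the case: you can lower $\min W_{e^*}$ only finitely often, while the image only has to be correct in the limit, its maximum can always catch up after each of your moves, and one $\emin$-class may be sent to the class $D_\infty$ of infinite sets; a naive watch-and-toggle construction stalls exactly when the relevant images are infinite and never ``look equal'' at a common stage. A way to complete it: writing $C_n$ for the $\emin$-class with minimum $n$ and $D_k$, $D_\infty$ for the $\emax$-classes, a reduction $h$ induces an injection on classes, so all but at most two $C_n$ are sent to classes $D_{k_n}$ with $k_n\in\omega$ and $n\mapsto k_n$ injective; an injective map on an infinite set of naturals cannot be strictly decreasing, so one may fix non-uniformly $a<b$ with $k_a<k_b$. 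By the recursion theorem control $e$: enumerate $b$ into $W_e$ and wait for a stage $s$ with $\max W_{h(e),s}>k_a$. If the wait never ends, then $W_e=\{b\}$ but the image's maximum never reaches $k_b>k_a$, contradicting the reduction; if it ends, enumerate $a$, so $\min W_e=a$, while $\max W_{h(e)}$ can never drop back to $k_a$ --- again a contradiction. Some such combinatorial ingredient (the non-uniformly chosen ``ascent pair'' plus the overshoot-then-drop move) is what your sketch is missing; as written, the hardest clause of the theorem is a statement of intent rather than a proof.
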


One important dividing line in this paper will be whether or not a given isomorphism relation is below $\eqce$. Note that even though $\eqce$ is $\Pi^0_2$-complete under $m$-reducibility, it is not complete under computable reducibility. In fact, such an equivalence relation cannot exist.

\begin{theorem}[\cite{ianovski2014complexity}]\label{no_universal_pi2} There is no $\Pi^0_2$-complete equivalence relation (under $\leq_c$). In fact, for each equivalence relation $R \in \Pi^0_2$, there is an equivalence relation $S \in \Delta^0_2$, s.t.\ $S \not \leq_c R$.
\end{theorem}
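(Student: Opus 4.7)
The plan is to prove the stronger ``in fact'' statement, from which the non-existence of a $\Pi^0_2$-complete equivalence relation follows immediately. Fix a $\Pi^0_2$ equivalence relation $R$; I will build a $\Delta^0_2$ equivalence relation $S$ with $S \not\leq_c R$ by a diagonal construction. Partition $\omega$ into a computable family of disjoint finite blocks $B_e$, one per index $e \in \omega$, and arrange $S$ so that its classes never cross blocks. For each $e$ the requirement is
\[
\mathcal{R}_e:\ \varphi_e \text{ is not a computable reduction from } S \text{ to } R.
\]
If $\varphi_e$ is not total, then $\mathcal{R}_e$ is met vacuously. Otherwise, a successful reduction would force $S\restriction B_e = T_e\restriction B_e$, where $T_e$ is the pullback $\Pi^0_2$ equivalence relation defined by $T_e(u,v) \iff \varphi_e(u)\, R\, \varphi_e(v)$.

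The key observation is that on a finite block of fixed size $k$, an equivalence relation admits only finitely many patterns (a Bell number), so $T_e \restriction B_e$ falls among finitely many possibilities. While the facts $x \,R\, y$ themselves are $\Pi^0_2$, their negations are $\Sigma^0_2$ and hence $\emptyset'$-computably enumerable. I therefore maintain, for each pair in $B_e \times B_e$, a $\emptyset'$-approximation to whether $\varphi_e$ sends them to an $R$-inequivalent pair, and use this to drive a priority strategy that switches $S\restriction B_e$ among the finitely many candidate patterns via a fixed derangement of the list, so as to eventually settle on one that differs from $T_e \restriction B_e$.

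Three steps should then complete the argument: (i) verify that the stage-by-stage approximation to the relevant $R$-inequivalences on $\varphi_e(B_e)$ stabilizes in the limit, using the finiteness of each $B_e$ together with the standard equivalence between $\emptyset'$-computability and limit-computability; (ii) show that the derangement-style choice among the finite list of equivalence patterns yields, in the limit, a pattern distinct from $T_e\restriction B_e$, thereby meeting $\mathcal{R}_e$; and (iii) verify that the block-by-block pieces, being independently managed, combine into a single $\Delta^0_2$ equivalence relation $S$.

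The main obstacle is step (i): while each individual $R$-inequivalence question is only $\Sigma^0_2$ and so need not stabilize under a naive limit-computable approximation, the finiteness of each block together with the transitivity constraints of equivalence relations restrict the consistent bit-patterns to a small finite set, which is what ultimately allows the construction to be organized $\Delta^0_2$-uniformly. Because the strategies on distinct blocks do not interact, no cross-block injury is needed, and the construction is priority-free once the block-level diagonalization is implemented.
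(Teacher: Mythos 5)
There is a genuine gap, and it sits exactly where you flag ``the main obstacle.'' Your plan requires, for each total $\varphi_e$, that the fixed pattern $S\restriction B_e$ (which, since $S$ is to be $\Delta^0_2$, must be decided by a halting $\emptyset'$-computation) differ from the pullback pattern $T_e\restriction B_e$. Already for a block of size $2$ this amounts to computing, with oracle $\emptyset'$, the negation of the single $\Pi^0_2$ bit ``$\varphi_e(u)\,R\,\varphi_e(v)$'', which is impossible in general ($\Pi^0_2\not\subseteq\Delta^0_2$). Larger blocks and the transitivity/Bell-number observation do not rescue this: you do not need to \emph{know} the true pattern, only to avoid it, but the opponent moves one jump above you. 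Concretely, take $R=\eqce$ and, by the recursion theorem, a $\varphi_e$ that knows its own block $B_e$ and a computable approximation to $S$ (which exists by the Limit Lemma, since $S$ is $\Delta^0_2$); $\varphi_e$ outputs indices of c.e.\ sets $V_u$ ($u\in B_e$) and at each stage merges $V_u,V_v$ when the current approximation says $u\,S\,v$ and adds a fresh element to $V_u$ alone when it says $u\,\not\!S\,v$. Once the approximation settles, this forces $V_u=V_v$ exactly when $u\,S\,v$, so $T_e\restriction B_e=S\restriction B_e$ on the nose, no matter how cleverly (derangements, priorities) you organized the choice. Hence the requirement $\mathcal{R}_e$, as you formulate it (disagreement \emph{inside} $B_e$), is simply not meetable by any $\Delta^0_2$ relation whose classes are confined to blocks; a correct proof must witness the failure of $\varphi_e$ globally, using comparisons across infinitely many elements, not on a fixed finite block reserved in advance.

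A second, related problem is the complexity bookkeeping in steps (i)--(iii). The inequivalence facts are $\Sigma^0_2$, i.e.\ c.e.\ relative to $\emptyset'$, so your stagewise data is an $\emptyset'$-computable approximation whose \emph{limit} is the pullback pattern; a construction that ``switches $S\restriction B_e$ \dots so as to eventually settle'' therefore defines $S$ only as the limit of an $\emptyset'$-computable process, i.e.\ $S\in\Delta^0_3$, not $\Delta^0_2$. A $\Delta^0_3$ witness does not suffice for the theorem (to refute $\Pi^0_2$-completeness the witness must itself be $\Pi^0_2$ or below), and if instead you insist that each bit of $S$ be settled by a halting $\emptyset'$-computation, you are back to the impossibility of the previous paragraph. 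So both the requirement scheme and the claimed complexity bound fail; the cited result of Ianovski--Miller--Ng--Nies is not obtained by block-local diagonalization of this kind, and repairing your argument would require a genuinely different, global idea.
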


For us, this means that showing that an isomorphism relation is $\Pi^0_2$ and even $\Delta^0_2$ is not sufficient to determine reducibility to $\eqce$.

Next, we introduce the monotonicity lemma \cite{coskey2012hierarchy}, which will allow us to more easily show certain non-reducibility results by diagonalization. Hidden in the proof of the lemma is an application of the recursion theorem, which we otherwise would have to explicitly use for each non-reducibility proof.

\begin{lemma}[Monotonicity Lemma, \cite{coskey2012hierarchy}]\label{monotonicity_lemma}
	Suppose $f: \nat \to \nat$ is computable and \emph{well-defined on c.e.\ sets}, i.e.\ for every $e,i$, $W_e = W_i \implies W_{f(e)} = W_{f(i)}$. Then for every $e,i\in\nat$, $W_e \subseteq W_i \implies W_{f(e)} \subseteq W_{f(i)}$.
\end{lemma}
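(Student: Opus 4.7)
The plan is to argue by contradiction using the recursion theorem, which is the standard trick for leveraging extensional hypotheses like the one on $f$. Suppose toward a contradiction that there exist indices $e,i$ with $W_e \subseteq W_i$ but $W_{f(e)} \not\subseteq W_{f(i)}$, and fix a witness $x \in W_{f(e)} \setminus W_{f(i)}$.

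The idea is to construct, via the recursion theorem, an index $j$ for a c.e.\ set $W_j$ defined in terms of $f(j)$ itself, as follows. We enumerate $W_j$ in stages: at stage $s$, if $x$ has not yet appeared in $W_{f(j),s}$, we add the next element of $W_e$ (say, enumerate $W_{e,s}$); the first time a stage $s_0$ is reached with $x \in W_{f(j),s_0}$, we switch and from then on enumerate all elements of $W_i$ into $W_j$. The map $e \mapsto j$ produced by this instruction is computable (assuming $f$ is known), so the recursion theorem produces an actual index $j$ where the construction really uses $f(j)$.

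Now I would split into two cases. If $x$ never appears in $W_{f(j)}$, then we never switch, so $W_j = W_e$; by the well-definedness hypothesis, $W_{f(j)} = W_{f(e)}$, which contains $x$, a contradiction. Hence the switch occurs at some stage $s_0$, and $W_j = W_{e,s_0} \cup W_i$. But $W_{e,s_0} \subseteq W_e \subseteq W_i$, so $W_j = W_i$. Applying the well-definedness hypothesis again yields $W_{f(j)} = W_{f(i)}$. Since $x \in W_{f(j)}$ by the choice of $s_0$, this forces $x \in W_{f(i)}$, contradicting the choice of $x$.

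The only step that requires any care is setting up the construction so that the recursion theorem applies cleanly: one must verify that the rule ``given an index $j^\prime$, produce an index for the c.e.\ set obtained by enumerating $W_e$ until $x \in W_{f(j^\prime)}$ and then switching to $W_i$'' is a total computable function of $j^\prime$ (uniformly in $e,i,x$), so that the fixed point $j$ exists. Everything else is the two-case bookkeeping above, which is mechanical once $j$ is in hand.
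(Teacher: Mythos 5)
Your proof is correct, and it is the standard recursion-theorem argument behind this lemma (the paper cites it from Coskey--Hamkins--Miller and explicitly notes that the recursion theorem is hidden in the proof): build a fixed point $j$ that copies $W_e$ until the witness $x$ shows up in $W_{f(j)}$ and then switches to $W_i$, so that $W_j$ equals $W_e$ or $W_i$ and well-definedness yields the contradiction. The case analysis and the uniformity check for the recursion theorem are exactly as needed, so there is nothing to fix.
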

Note that the monotonicity lemma applies to any computable $f$ that reduces some quotient of $\eqce$ to $\eqce$.

\subsection{\texorpdfstring{$\emin$}{Emin} on well-orderings}

Coskey, Hamkins, and Miller \cite{coskey2012hierarchy} generalized $\emin$ and $\emax$ to any computable linear ordering of the natural numbers, using Dedekin cuts. For us, it will be sufficient to consider $\emin$ on computable well-orderings. This allows us to use a simpler definition that is equivalent to theirs for the cases that we consider. 

\begin{definition}
    Let $\A$ be a computable well-ordering of $\nat$. We define $\emin(\A)$ by $e ~\emin(\A)~ i$ if and only if $W_e$ and $W_i$ both have the same $<_\A$-least element or are both empty. 
\end{definition}

These $\emin$-equivalence relations have the following properties: 
\begin{enumerate}
    \item for any computable well-ordering $\A$, $\emin(\A) \le_c \eqce$, 
    \item there are two different computable well-orderings $\A, \B$ of the same order type (ordinal), where $\emin(\A)$ and $\emin(\B)$ are not computably bi-reducible, and 
    \item for any two computable well-orderings $\A, \B$ of different order type, $\emin(\A)$ and $\emin(\B)$ are not computably bi-reducible.
\end{enumerate}

(1) and (3) directly follow from analogous results that \cite{coskey2012hierarchy} have shown in the context of their definition. With Proposition \ref{c_red_implies_zero_jump_emb} and Proposition \ref{emin_different_ordinals}, we prove (2) and re-prove (3), respectively, in our context and notation. For that, we use the following two simple lemmas about properties preserved by a computable reduction that is well-defined on c.e.\ sets.

\begin{lemma}\label{f_preserves_order}
    Let $\A,\B$ be computable well-orderings and let $f$ be a computable reduction from $\emin(\A)$ to $\emin(\B)$ that is well-defined on c.e.\ sets. Then $\min_\A(W_e) \leq_\A \min_\A(W_i)$ implies $\min_\B(W_{f(e)}) \leq_\B \min_\B(W_{f(i)})$.
\end{lemma}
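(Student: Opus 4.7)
The plan is to reduce the claim to a single application of the monotonicity lemma by passing through the union $W_e \cup W_i$. First, using the $s$-$m$-$n$ theorem, I would produce a computable index $j$ with $W_j = W_e \cup W_i$. The hypothesis $\min_\A(W_e) \leq_\A \min_\A(W_i)$ tells us that both $W_e$ and $W_i$ are nonempty and that $\min_\A(W_j) = \min_\A(W_e)$; hence $e ~\emin(\A)~ j$, and by the fact that $f$ reduces $\emin(\A)$ to $\emin(\B)$ we obtain $f(e) ~\emin(\B)~ f(j)$.

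Next, I would invoke the monotonicity lemma (Lemma \ref{monotonicity_lemma}), applicable because $f$ is well-defined on c.e.\ sets by hypothesis. Applied to the inclusions $W_e \subseteq W_j$ and $W_i \subseteq W_j$, it yields $W_{f(e)} \subseteq W_{f(j)}$ and $W_{f(i)} \subseteq W_{f(j)}$. Combining the first containment with $f(e) ~\emin(\B)~ f(j)$ forces $\min_\B(W_{f(e)}) = \min_\B(W_{f(j)})$, while the second gives $\min_\B(W_{f(j)}) \leq_\B \min_\B(W_{f(i)})$. Chaining these two facts delivers the desired $\min_\B(W_{f(e)}) \leq_\B \min_\B(W_{f(i)})$.

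The one subtlety I foresee is that the statement tacitly presupposes that the $\B$-minima on the right-hand side are defined, i.e.\ that $W_{f(e)}$ and $W_{f(i)}$ are nonempty. I would dispatch this with a short preliminary observation. Let $e_0$ index $\emptyset$. Since $W_e \neq \emptyset$, we have $e \not\emin(\A) e_0$, hence $f(e) \not\emin(\B) f(e_0)$. If $W_{f(e_0)} = \emptyset$, this already forces $W_{f(e)} \neq \emptyset$. If instead $W_{f(e_0)} \neq \emptyset$, then by monotonicity applied to $W_{e_0} \subseteq W_e$ we get $W_{f(e_0)} \subseteq W_{f(e)}$, so again $W_{f(e)}$ is nonempty. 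The same argument covers $W_{f(i)}$ and $W_{f(j)}$.

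I do not expect any further obstacle: the heart of the proof is the union-plus-monotonicity trick, and the only real care needed is bookkeeping around nonemptiness so that all minima in the chain are well-defined.
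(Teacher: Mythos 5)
Your proof is correct and follows essentially the same route as the paper's: pass to an auxiliary c.e.\ set lying in the same $\emin(\A)$-class as $W_e$ but containing (a representative of) the other class, then apply the monotonicity lemma to the inclusions and use that $f$ is a reduction. The only cosmetic difference is that you use the union $W_e \cup W_i$ where the paper uses the finite sets $\{a_i\} \subseteq \{a_e,a_i\}$, and your explicit treatment of nonemptiness of the images is a welcome piece of bookkeeping that the paper leaves implicit.
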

\begin{proof}
    Let $a_e := \min_\A(W_e)$ and $a_i := \min_\A(W_i)$.
    The reduction $f$ maps $W_i$ and $\{a_i\}$ into the same $\emin(\B)$-class. Furthermore, $f$ maps $W_e$, $\{a_e\}$, and $\{a_e, a_i\}$ into the same $\emin(\B)$-class.
    By the monotonicity lemma (Lemma \ref{monotonicity_lemma}), $f(\{a_i\}) \subseteq f(\{a_e, a_i\})$, thus $\min_\B(f(a_e, a_i\})) \leq_\B \min_\B(f(\{a_i\}))$ and therefore also $\min_\B(W_{f(e)}) \leq_\B \min_\B(W_{f(i)})$.
\end{proof}

Note that, since reductions preserve equivalence classes, we also have that if $\min_\A(W_e) <_\A \min_\A(W_i)$ then $\min_\B(W_{f(e)}) <_\B \min_\B(W_{f(i)})$.

For a well-ordering $\A$, let the $\A$\emph{-rank} (or just \emph{rank}, if $\A$ is clear from the context) of an element $a$ in $\A$ be the order type of the initial segment $\{b \in \A\mid b <_\A a\}$.
\begin{lemma}\label{f_preserves_rank}
    Let $\A,\B$ be computable well-orderings and let $f$ be a computable reduction from $\emin(\A)$ to $\emin(\B)$ that is well-defined on c.e.\ sets.
    Let $a_\gamma$ be the $\A$-least element of $W_e$, and $\gamma$ the $\A$-rank of $a_\gamma$. Then the $\B$-least element of $W_{f(e)}$ exists and has $\B$-rank at least $\gamma$. 
\end{lemma}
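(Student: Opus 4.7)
The plan is to reduce everything to Lemma \ref{f_preserves_order} together with the strict version noted in the remark immediately following it. First I would handle the existence of the $\B$-least element. Because $W_e$ contains $a_\gamma$, the pair $(e, e_\emptyset)$, where $e_\emptyset$ is any index for $\emptyset$, is not in $\emin(\A)$. Since $f$ is a reduction, $(f(e), f(e_\emptyset))$ is not in $\emin(\B)$; in particular $W_{f(e)}$ is nonempty, and since $\B$ is a well-ordering $W_{f(e)}$ has a $\B$-least element.

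For the rank bound the idea is to produce a strict order embedding of the $\A$-initial segment $I_\gamma := \{b \in \A \mid b <_\A a_\gamma\}$ into the $\B$-initial segment strictly below $\min_\B(W_{f(e)})$. By the $s$-$m$-$n$ theorem, I would fix a computable $g$ with $W_{g(b)} = \{b\}$ for every $b$, so that $\min_\A(W_{g(b)}) = b$. The strict form of Lemma \ref{f_preserves_order}, applied to $W_{g(b)}$ and $W_e$, gives $\min_\B(W_{f(g(b))}) <_\B \min_\B(W_{f(e)})$ for every $b \in I_\gamma$; applied to $W_{g(b_1)}$ and $W_{g(b_2)}$ it yields $\min_\B(W_{f(g(b_1))}) <_\B \min_\B(W_{f(g(b_2))})$ whenever $b_1 <_\A b_2$. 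Hence $b \mapsto \min_\B(W_{f(g(b))})$ is a strict order embedding of $I_\gamma$ into the desired $\B$-initial segment.

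Finally, since $\A$ and $\B$ are well-orderings, an order embedding from a well-ordering of type $\gamma$ into one of type $\delta$ forces $\gamma \leq \delta$; here $\delta$ is exactly the $\B$-rank of $\min_\B(W_{f(e)})$, so this rank is at least $\gamma$, which is the desired conclusion.

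The main obstacle is essentially bookkeeping rather than mathematics: the argument is driven entirely by Lemma \ref{f_preserves_order}. The only mildly delicate point I want to flag is that well-definedness of $f$ on c.e.\ sets is what allows us to speak of $\min_\B(W_{f(g(b))})$ unambiguously --- any two indices for $\{b\}$ yield $\emin(\B)$-equivalent values under $f$, hence the same $\B$-least element --- so the embedding is genuinely a function of $b \in I_\gamma$ rather than of a choice of index.
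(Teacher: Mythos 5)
Your proof is correct in substance but takes a genuinely different route from the paper. The paper argues by transfinite induction on $\gamma$: the base case plays $W_e$ against an index of $\emptyset$ via the monotonicity lemma, and the successor and limit cases apply the induction hypothesis to the augmented sets $W_e \cup \{a_\delta\}$ for $\delta < \gamma$, pushing the rank up step by step (or to a limit) via Lemma \ref{f_preserves_order}. You avoid the induction altogether: you build the strict order embedding $b \mapsto \min_\B(W_{f(g(b))})$ of the initial segment of $\A$ below $a_\gamma$ into the initial segment strictly $\B$-below $\min_\B(W_{f(e)})$, and then invoke the standard fact that a strictly increasing map between well-orderings forces the corresponding inequality of order types. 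This is essentially the embedding $a \mapsto \min_\B(f(\{a\}))$ that the paper only introduces later, in Proposition \ref{c_red_implies_zero_jump_emb}; your argument shows Lemma \ref{f_preserves_rank} is really a corollary of that embedding idea, which is arguably cleaner than the three-case induction, while the paper's version keeps the lemma self-contained (working with supersets of $W_e$ rather than with singletons) and in the exact form it later cites.

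One small repair is needed. Your existence step, ``$(f(e),f(e_\emptyset)) \notin \emin(\B)$, in particular $W_{f(e)}$ is nonempty,'' is not valid as stated: inequivalence with an index of the empty set is also consistent with $W_{f(e)} = \emptyset$ and $W_{f(e_\emptyset)} \neq \emptyset$. You need the monotonicity lemma (Lemma \ref{monotonicity_lemma}): from $\emptyset \subseteq W_e$ you get $W_{f(e_\emptyset)} \subseteq W_{f(e)}$, so $W_{f(e)} = \emptyset$ would force both images to be empty and hence $\emin(\B)$-equivalent, a contradiction. (This is exactly the paper's base case.) The same remark is needed to know that each $W_{f(g(b))}$ is nonempty before you may speak of its $\B$-least element and apply the strict form of Lemma \ref{f_preserves_order}. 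With that one-line fix, your argument goes through.
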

\begin{proof}
    We show this by transfinite induction on $\gamma$.
    \begin{itemize}
        \item Base case: Assume $\gamma = 0$. So $W_e$ contains an element $a_0$ of $\A$-rank $0$, thus is not empty. Let $i$ be an index of the empty set. By the monotonicity lemma (Lemma \ref{monotonicity_lemma}), $W_i \subseteq W_e$ implies $W_{f(i)} \subseteq W_{f(e)}$. Furthermore, since $W_i$ and $W_e$ are in different $\emin(\A)$-classes, we have $W_{f(i)} \subsetneq W_{f(e)}$. Thus, $W_{f(e)}$ cannot be empty and therefore has a least element of $\B$-rank at least $0$.
        \item Successor case: Assume $\gamma$ is a successor of $\delta$ and that the statement holds for $a_\delta$ of $\A$-rank $\delta$. Observe that $\min_\A(W_e \cup a_\delta) <_\A \min_\A(W_e)$. We can apply the induction hypothesis to $W_e \cup a_\delta$ and get that its image under $f$ has as its $\B$-minimum an element of $\B$-rank at least $\delta$. By Lemma \ref{f_preserves_order}, $\min_\B(W_{f(e)})$ must then have rank strictly greater than $\delta$, thus at least $\gamma$.

        \item Limit case: Assume $\gamma$ is a limit ordinal and assume that the statement holds for all $a_\delta$ of $\A$-rank $\delta < \gamma$. We can apply the induction hypothesis to any $W_e \cup a_\delta$, for $\delta < \gamma$ to get that the $\B$-least element of $f(W_e \cup a_\delta)$ has $\B$-rank at least $\delta$. Analogously to the successor case we then conclude by Lemma \ref{f_preserves_order} that $\min_\B(f(W_e))$ must have $\B$-rank strictly above any $\min_\B(f(W_e \cup a_\delta))$, $\delta < \gamma$, therefore at least the limit of their $\B$-ranks, which is $\gamma$.
    \end{itemize}
\end{proof}

\begin{observation}\label{emin_red_well_def}
    Let $\A$ be a computable well-ordering and let $E$ be an equivalence relation on c.e.\ sets. If $E$ is computably reducible to $\emin(\A)$, then there is a computable reduction from $E$ to $\emin(\A)$ that is well-defined on c.e.\ sets.
\end{observation}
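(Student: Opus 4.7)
The plan is to use a uniform version of the Rice-Shapiro theorem to recast the given reduction $h$ as an enumeration operator applied to $W_e$; such operators are automatically well-defined on c.e.\ sets.

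Given a computable reduction $h : \omega \to \omega$ from $E$ to $\emin(\A)$, I would first consider the function $a(e) := \min_\A(W_{h(e)})$, with the convention $a(e) := \infty$ when $W_{h(e)} = \emptyset$. Since $E$ is an equivalence relation on c.e.\ sets, $W_e = W_i$ forces $e \,\eqce\, i$, hence $e E i$ and $h(e) \,\emin(\A)\, h(i)$, so $a(e)$ depends only on $W_e$. For each $\alpha \in \A$, the set $A_\alpha := \{e : a(e) \leq_\A \alpha\}$ is c.e.\ uniformly in $\alpha$ (enumerate $W_{h(e)}$ and accept $e$ as soon as some $\beta \leq_\A \alpha$ shows up; this is effective because $\A$ is a computable well-ordering), and by the previous observation $A_\alpha$ is $\eqce$-invariant.

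Next, I would invoke the uniform version of Rice-Shapiro on the family $(A_\alpha)_{\alpha \in \A}$ to produce a uniformly c.e.\ family $(\mathcal{D}_\alpha)_{\alpha \in \A}$ of canonical indices of finite sets such that
\[
a(W_e) \leq_\A \alpha \iff \exists D \in \mathcal{D}_\alpha \; (D \subseteq W_e).
\]
From this I would build the enumeration operator $\Phi(X) := \{\alpha : \exists D \in \mathcal{D}_\alpha \; (D \subseteq X)\}$. By the displayed equivalence, $\Phi(X)$ equals the $\A$-upward-closed set $\{\alpha : a(X) \leq_\A \alpha\}$, so---since $\A$ is a well-ordering---$\min_\A(\Phi(X)) = a(X)$ whenever $\Phi(X) \neq \emptyset$, and $\Phi(X) = \emptyset$ precisely when $a(X) = \infty$.

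Finally, I would take $h'(e)$ to be the computable function that, uniformly in $e$, enumerates into $W_{h'(e)}$ every $\alpha \leq s$ such that some $D \in \mathcal{D}_{\alpha,s}$ is seen to satisfy $D \subseteq W_{e,s}$ at some stage $s$. Then $W_{h'(e)} = \Phi(W_e)$ depends only on $W_e$, so $h'$ is well-defined on c.e.\ sets, and the chain
\[
e E i \iff a(W_e) = a(W_i) \iff \min_\A(W_{h'(e)}) = \min_\A(W_{h'(i)}) \iff h'(e) \,\emin(\A)\, h'(i)
\]
(where the $\emin(\A)$-convention that identifies empty sets handles the case $a(W_e) = \infty$) shows that $h'$ reduces $E$ to $\emin(\A)$. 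The main obstacle is the appeal to uniform Rice-Shapiro, which is precisely what converts the possibly highly non-uniform reduction $h$ into an enumeration-theoretic object on $W_e$; once this is in hand, the well-orderedness of $\A$ makes the computation of the correct $\A$-minimum automatic.
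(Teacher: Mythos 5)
Your argument is correct, but it takes a genuinely different and noticeably heavier route than the paper. The paper's proof is a one-liner: given the reduction $f$, let $W_{f'(e)}$ be the $\leq_\A$-upward closure of $W_{f(e)}$ (computable since $\A$ is a computable ordering); this set is determined by $\min_\A(W_{f(e)})$ alone, which is an $\emin(\A)$-invariant of $f(e)$ and hence, because $E$ is an equivalence relation on c.e.\ sets, depends only on $W_e$ --- so $f'$ is automatically well-defined on c.e.\ sets, and it is still a reduction since each $W_{f'(e)}$ is $\emin(\A)$-equivalent to $W_{f(e)}$. Your construction ultimately produces the very same output set $\{\alpha \mid a(W_e) \leq_\A \alpha\}$, but you compute it as an enumeration operator applied to $W_e$ via Rice--Shapiro, rather than directly from $W_{f(e)}$. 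The Rice--Shapiro step is legitimate: $A_\alpha$ is indeed a uniformly c.e.\ index set (your invariance argument is the same one the paper uses), and the uniformity you need is unproblematic, since one may simply take $\mathcal{D}_\alpha = \{D \text{ finite} \mid a(D) \leq_\A \alpha\}$, which is uniformly c.e.\ by applying $h$ to a computed c.e.\ index of $D$, and then apply ordinary Rice--Shapiro pointwise in $\alpha$ (both its compactness and monotonicity directions) to get your displayed equivalence. What your approach buys is a slightly stronger conclusion --- the new reduction is induced by an enumeration operator, hence manifestly monotone and continuous in $W_e$ --- but for the observation as stated this machinery is unnecessary, and monotonicity would in any case follow afterwards from the paper's Monotonicity Lemma; the paper's upward-closure trick gets the same well-definedness for free.
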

\begin{proof}
    Let $f$ be a computable reduction from $E$ to $\emin(\A)$. We define the computable function $f^\prime$ on indices of c.e.\ sets as follows: for any c.e.\ set $W_e$, enumerate into $W_{f^\prime(e)}$
    \begin{itemize}
        \item any element that is enumerated into $W_{f(e)}$ and
        \item for any element $m$ that is enumerated, we also enumerate any $n >_\A m$ (which we can do because $\A$ is computable).
    \end{itemize} 
    For any $e$, $W_{f(e)}$ and $W_{f^\prime(e)}$ are $\emin(\A)$-equivalent. Thus, $f^\prime$ is also a reduction. Since $E$ is an equivalence relation on c.e.\ sets, $W_e = W_i$ implies $eEi$, which implies $f(e)\emin(\A)f(i)$. Thus, $W_{f(e)}$ and $W_{f(i)}$ have the same $\A$-least element. By our construction, that implies $W_{f^\prime(e)} = W_{f^\prime(i)}$. So $f^\prime$ is well-defined on c.e.\ sets.
\end{proof}

\begin{proposition}\label{c_red_implies_zero_jump_emb}
    Let $\A,\B$ be computable well-orderings and let $\emin(\A) \leq_c \emin(\B)$. Then there is an embedding of $\A$ into $\B$ that is computable from $\mathbf{0^\prime}$.
\end{proposition}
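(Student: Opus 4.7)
The plan is to define an embedding $g \colon \A \to \B$ by setting $g(a) := \min_\B(W_{f(\{a\})})$, where $f$ is a computable reduction from $\emin(\A)$ to $\emin(\B)$. As a first step, I would invoke Observation \ref{emin_red_well_def} to assume without loss of generality that $f$ is well-defined on c.e.\ sets, so that Lemmas \ref{f_preserves_order} and \ref{f_preserves_rank} are applicable to $f$. Here $\{a\}$ is viewed as a c.e.\ set, and a computable index for it can be obtained uniformly from $a$.

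Next I would verify that $g$ is well-defined and strictly order-preserving. Well-definedness is Lemma \ref{f_preserves_rank} applied to $W_e = \{a\}$: since $a$ is itself the $\A$-least element of $\{a\}$ and has some $\A$-rank $\gamma$, the set $W_{f(\{a\})}$ is non-empty and its $\B$-minimum exists (moreover, has $\B$-rank at least $\gamma$). For strict order-preservation, suppose $a <_\A b$. Then $\min_\A\{a\} <_\A \min_\A\{b\}$, so by the strict version of Lemma \ref{f_preserves_order} (noted immediately after its proof) we get $g(a) = \min_\B W_{f(\{a\})} <_\B \min_\B W_{f(\{b\})} = g(b)$. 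Strict order-preservation implies injectivity, so $g$ is an embedding of $\A$ into $\B$.

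It remains to show that $g$ can be computed from $\mathbf{0'}$. Given $a$, I would compute $g(a)$ by an iteration that exploits well-foundedness of $\B$: enumerate $W_{f(\{a\})}$ until some element $b_0$ appears (guaranteed to exist by the paragraph above), then ask $\mathbf{0'}$ the $\Sigma^0_1$ question ``does there exist $b \in W_{f(\{a\})}$ with $b <_\B b_0$?''. If the answer is negative, output $b_0 = g(a)$; otherwise enumerate until such a witness $b$ appears, set $b_0 := b$, and repeat. The successive values of $b_0$ form a strictly $\B$-decreasing sequence, so by well-foundedness of $\B$ the loop terminates after finitely many steps, producing $g(a)$.

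There is no major obstacle here: the preparatory Lemmas \ref{f_preserves_order} and \ref{f_preserves_rank} carry essentially all of the structural content, and the only genuinely new ingredient is the simple but crucial observation that the $\B$-minimum of a c.e.\ set, when it is known to exist, is uniformly $\mathbf{0'}$-computable whenever $\B$ is a computable well-ordering.
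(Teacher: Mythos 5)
Your proposal is correct and follows essentially the same route as the paper: assume via Observation \ref{emin_red_well_def} that the reduction is well-defined on c.e.\ sets, define $g(a) := \min_\B(W_{f(\{a\})})$, and use Lemma \ref{f_preserves_order} (in its strict form) to see that $g$ is an embedding. The paper leaves the $\mathbf{0^\prime}$-computability of $g$ as ``easy to see,'' and your explicit iteration using well-foundedness of $\B$ is a valid way to fill in that detail.
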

\begin{proof}
    By Observation \ref{emin_red_well_def}, let $f$ be a computable reduction that is well-defined on c.e.\ sets. Define an embedding $g$ of $\A$ into $\B$ by
    \[
        g(a) := \min_\B(f(\{a\}).
    \]
    To verify that $g$ is an embedding, observe that, by Lemma \ref{f_preserves_order}, $a_1 \leq_\A a_2$ implies $\min_\B(f(\{a_1\}) \leq_\B \min_\B(f(\{a_2\})$.
    Furthermore, it is easy to see that one Turing jump can compute $g$.
\end{proof}

Bazhenov, Rossegger, and Zubkov \cite{bazhenov2022bi} have shown in particular that there are computable well-orderings $\A$ and $\B$ of order type $\omega^2$ s.t.\ there is no embedding of $\A$ into $\B$ that is computable from $\mathbf{0}^{\prime\prime}$. Therefore, Proposition \ref{c_red_implies_zero_jump_emb} shows that there are computable well-orderings of the same order type, whose corresponding $\emin$-equivalence relations are not computably bi-reducible.

\begin{proposition}\label{emin_different_ordinals}
    Let $\beta < \alpha$ be two computable ordinals and let $\B$ be a computable well-ordering of type $\beta$ and $\A$ a computable well-ordering of type $\alpha$. Then $\emin(\A) \not\leq_c \emin(\B)$.
\end{proposition}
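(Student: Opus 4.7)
The plan is to derive a contradiction by exhibiting a c.e.\ set whose image under a hypothetical reduction would need to contain an element of $\B$-rank that is too large.

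Suppose for contradiction that $\emin(\A) \leq_c \emin(\B)$. By Observation \ref{emin_red_well_def}, we may assume the computable reduction $f$ is well-defined on c.e.\ sets, so Lemma \ref{f_preserves_rank} applies. Since $\beta < \alpha$ and $\A$ has order type $\alpha$, the well-ordering $\A$ contains an element $a$ of $\A$-rank exactly $\beta$ (namely, the $(\beta+1)$-st element of $\A$). Let $e$ be an index with $W_e = \{a\}$.

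By Lemma \ref{f_preserves_rank}, the c.e.\ set $W_{f(e)}$ is nonempty and its $\B$-least element has $\B$-rank at least $\beta$. However, $\B$ has order type $\beta$, so every element of $\B$ has $\B$-rank strictly less than $\beta$. This is the desired contradiction.

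The argument is essentially immediate once the preceding lemmas are in hand; the main obstacle has already been addressed in the preparatory work, namely passing to a representative of $f$ that is well-defined on c.e.\ sets (via Observation \ref{emin_red_well_def}) and establishing the rank-preservation property (Lemma \ref{f_preserves_rank}). The only subtle point to check is the existence of an element of $\A$-rank exactly $\beta$ inside $\A$, which uses nothing more than $\beta < \alpha$ together with the definition of order type.
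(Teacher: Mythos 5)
Your proof is correct and follows essentially the same route as the paper: pass to a reduction well-defined on c.e.\ sets via Observation \ref{emin_red_well_def}, apply Lemma \ref{f_preserves_rank} to a singleton of large $\A$-rank, and contradict the fact that all $\B$-ranks lie below $\beta$. In fact your choice of an element of rank exactly $\beta$ is slightly more careful than the paper's choice of rank $\gamma>\beta$, since the latter need not exist when $\alpha=\beta+1$.
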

\begin{proof}
    By Observation \ref{emin_red_well_def}, let $f$ be a computable reduction that is well-defined on c.e.\ sets. Let $a_\gamma \in \A$ be an element of $\A$-rank $\gamma > \beta$. But we know by Lemma \ref{f_preserves_rank} that $\min_\B(W_{f\{a_\gamma\}})$ has $\B$-rank at least $\gamma$, a contradiction.
\end{proof}

In \cite{coskey2012hierarchy}, there is no distinction between the ordinal and the actual well-ordering, even though different well-orderings of the same ordinal can be in different $c$-degrees. This does not make a difference for their results, but we need to be more precise for our purposes.
We are particularly interested in well-orderings that are ``computable" in an even stronger sense, namely for any element we can computably know its rank, i.e.\ we know exactly where in the order it is. Therefore, any two such orderings of the same ordinal are then also computably isomorphic, and the associated $\emin$-relations are computably bi-reducible. Thus, we can define $\emin(\alpha)$ using these ``strongly computable" well-orderings, and its $c$-degree will be well-defined. Also, by Proposition \ref{emin_different_ordinals}, we know that the degree of some $\emin(\alpha)$ cannot contain an $\emin(\B)$ for a well-ordering $\B$ of an order type different from $\alpha$. Later, we will see natural examples of isomorphism relations that fall in the degrees of some $\emin(\alpha)$, demonstrating the utility of this definition.

We give a canonical definition of an encoding up to $\omega^\omega$, since that is the highest we need to go for our work. Note that by using Cantor normal form, one could also define an encoding up to $\epsilon_0$ (the supremum of $\{1,\omega, \omega^\omega, \omega^{\omega^\omega},...\})$.    
\begin{definition}
    Let $\alpha$ be a computable ordinal up to (including) $\omega^\omega$. That is, each element in it has normal form $\alpha = \omega^{e_1}c_1 + ..., \omega^{e_k}c_k$ with $e_1,...,e_k,c_1,...,c_k \in \omega$ and $e_1 > ... > e_k$ (this is essentially Cantor normal form below $\omega^\omega$). Let $h$ be a bijective and computable encoding of all ordinals (represented as finite tuples $\langle e_1,...,e_k,c_1,...,c_k \rangle$) below $\alpha$ into $\omega$ and let $\A_\alpha$ be the ordering defined by $n \leq_{\A_\alpha} m \iff h^{-1}(n) < h^{-1}(m)$. Then we define $\emin(\alpha) := \emin(\A_\alpha)$.
\end{definition}

\begin{proposition}\label{E_min_strictly_ascending}
    If $\alpha < \beta$ then $\emin(\alpha) <_c \emin(\beta)$.
\end{proposition}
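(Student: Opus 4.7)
The plan is to split into two claims. For $\emin(\beta) \not\leq_c \emin(\alpha)$, Proposition \ref{emin_different_ordinals} applies directly to the canonical well-orderings $\A_\beta$ and $\A_\alpha$ (of order types $\beta > \alpha$), so no computable reduction in that direction can exist.

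For $\emin(\alpha) \leq_c \emin(\beta)$, my approach is to build a computable, order-preserving, injective map $\iota$ from the code set of $\A_\alpha$ into the code set of $\A_\beta$, and then lift it to a reduction on c.e.\ sets via the s-m-n theorem. The map $\iota$ is produced by composing the encodings that define the two canonical well-orderings: given a code $n$ for $\A_\alpha$, decode it to its Cantor-normal-form tuple (which represents some ordinal $< \alpha < \beta$), then re-encode that same tuple using the encoding for $\A_\beta$. Because both encodings are bijective, computable, and computably invertible, $\iota$ is a computable injection; because both $\leq_{\A_\alpha}$ and $\leq_{\A_\beta}$ are defined by pulling back the genuine ordinal order through their respective encodings, $\iota$ is automatically order-preserving.

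Given $\iota$, the reduction $f$ is obtained by taking $W_{f(e)}$ to be the image $\iota(W_e)$, via a uniform enumeration. The verification that $f$ reduces $\emin(\alpha)$ to $\emin(\beta)$ will come down to two observations: $W_{f(e)} = \emptyset$ iff $W_e = \emptyset$, and, when non-empty, $\min_{\A_\beta}(W_{f(e)}) = \iota\bigl(\min_{\A_\alpha}(W_e)\bigr)$; injectivity of $\iota$ then transfers equality of $\A_\alpha$-minima to equality of $\A_\beta$-minima.

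I expect no real obstacle: Proposition \ref{emin_different_ordinals} supplies the non-reducibility, and the reducibility is essentially immediate from the coherence of the Cantor-normal-form encodings along the inclusion of ordinals $\alpha \subseteq \beta$. The only point requiring care is ensuring $\iota$ is well-defined on the whole code set of $\A_\alpha$, which is automatic since every tuple representing an ordinal below $\alpha$ also represents an ordinal below $\beta$.
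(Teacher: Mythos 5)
Your proposal is correct and follows essentially the same route as the paper: the non-reducibility direction is exactly the paper's citation of Proposition \ref{emin_different_ordinals}, and your map $\iota$ (decode a code of $\A_\alpha$ to its ordinal/Cantor-normal-form and re-encode into $\A_\beta$) is precisely the paper's rank-preserving reduction that sends each element entering $W_e$ to an element of the same rank in $\A_\beta$. The extra care you take about injectivity, order-preservation, and the empty-set case only makes explicit what the paper's verification uses implicitly.
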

\begin{proof}
    ``$\emin(\alpha) \leq_c \emin(\beta)$": We define a computable reduction $f$ s.t.\ \[e ~\emin(\alpha)~ i \iff f(e) ~\emin(\beta)~ f(i).\] Given an index $e$, we provide an effective procedure to enumerate $W_{f(e)}$: for any element $n \in \omega$ entering $W_e$ we calculate its $\A_\alpha$-rank $\gamma$ and enumerate an $m$ of $\A_\beta$-rank $\gamma$ into $W_{f(e)}$. Then any two $W_e, W_i$ have the same $\A_\alpha$-least element iff $W_{f(e)}, W_{f(i)}$ have the same $\A_\beta$-least element.

    ``$\emin(\beta) \not\leq_c \emin(\alpha)$": Follows from Proposition \ref{emin_different_ordinals}.
\end{proof}

\subsection{Classes of algebras}
In what follows, we will investigate $n$-generated and finitely generated classes in the following varieties:
\begin{itemize}
    \item $\AG{}$: the abelian (commutative) groups,
    \item $\UF{m}{}$: all algebras with exactly $m$ function symbols and all of them unary (of arity 1),
    \item $\CS{}$: the commutative semigroups, and
    \item $\CM{}$: the commutative monoids.
\end{itemize}

At this point, we also mention an example of two very simple commutative semigroup presentations that are isomorphic but not via a permutation of generators. This is to illustrate the difficulty of recognizing isomorphic presentations.

\begin{example}\label{ex:non_trivial_isomorphism}
    Consider the presentations $\langle a,b \mid a = a^2b^2\rangle$ and $\langle c,d \mid c = c^2d^3\rangle$. Clearly, neither $c\mapsto a, d\mapsto b$ nor $c\mapsto b, d\mapsto a$ can induce an isomorphism. However, these two presentations are still isomorphic: take the mappings $\varphi: a \mapsto cd, b \mapsto d$ and $\psi: c \mapsto a^2b, d \mapsto b$ and check the conditions given in Observation \ref{fp_sigma_1}.
\end{example}

Still, it is known that for finite presentations, this is a decidable problem.
\begin{theorem}[\cite{taiclin1974isomorphism}, \cite{grunewald1980some}]\label{isomorphism_problem_decidable}
	The isomorphism problem is decidable for finite presentations of commutative monoids. 
\end{theorem}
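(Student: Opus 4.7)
My plan is to reduce the isomorphism problem to a decidable verification performed on a computably bounded finite search space. The starting point is the classical fact that the word problem for finitely presented commutative monoids is decidable (due to Emelichev, Evseev, and others; it can be established via binomial ideals and Gr\"obner bases over $\Z[x_1,\dots,x_n]$, or by Hermite normal form computations on the congruence of $\N^n$ generated by the defining relations). Once word equality is decidable in every given presentation, the four conditions in Observation \ref{fp_sigma_1} become \emph{decidable} (rather than merely $\Sigma^0_1$) as soon as candidate mappings $\varphi \colon X \to \T(Y)$ and $\psi \colon Y \to \T(X)$ are fixed. Thus the task reduces to cutting the \emph{a priori} infinite set of such candidate pairs down to a computable finite set.

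The approach I would take is to extract from the input presentations a computable upper bound $B$ on the length (say, total number of generator occurrences) of the terms that must be considered as images of generators under any isomorphism. Example \ref{ex:non_trivial_isomorphism} shows that one cannot hope for $B = 1$, so the bound must be genuinely nontrivial. To produce $B$, I would rely on the structure theory of finitely generated commutative monoids: R\'edei's theorem (every finitely generated commutative monoid is finitely presented), Grillet's archimedean decomposition (yielding finitely many archimedean components whose sizes and associated groups are computable from a presentation), and the Grothendieck group construction (which attaches to each monoid a finitely generated abelian group whose isomorphism type is a decidable invariant). Any isomorphism must respect all of these, so the possible images of generators are forced to lie in a computably describable finite set of elements.

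The main obstacle I anticipate is the effectivization of this structural analysis: from only the finite defining data, one needs to computably produce the archimedean decomposition, identify the Grothendieck group together with the natural map from the monoid into it, and derive the uniform bound $B$ from this combined information. This algebraic core is where the work of Taiclin \cite{taiclin1974isomorphism} and Grunewald \cite{grunewald1980some} lies; my proposal is essentially to invoke their analysis at this step. Once $B$ is in hand, the decision procedure enumerates the finitely many candidate pairs $(\varphi,\psi)$ of generator assignments whose image terms have length at most $B$ and checks each against the four conditions of Observation \ref{fp_sigma_1} using the decidable word problem, answering ``yes'' exactly when some candidate succeeds.
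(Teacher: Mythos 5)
There is nothing in the paper to compare your argument against: Theorem~\ref{isomorphism_problem_decidable} is not proved in the paper at all, it is imported as a black box from \cite{taiclin1974isomorphism} and \cite{grunewald1980some}. Judged on its own terms, your proposal has a genuine gap at exactly the step that carries the content of the theorem. The easy half of your sketch is fine: with a decidable word problem, conditions (1)--(4) of Observation~\ref{fp_sigma_1} become decidable for a \emph{fixed} pair $(\varphi,\psi)$, so isomorphism is c.e. (which the paper already records). Everything then hinges on the computable length bound $B$ for the images of generators, and you never establish that such a bound exists or is computable. The invariants you appeal to (R\'edei's finite presentability, Grillet's archimedean decomposition, the Grothendieck group) are far from complete invariants of finitely generated commutative monoids, and nothing in your argument shows that ``respecting them'' confines the candidate images of generators to a computably describable finite set; Example~\ref{ex:non_trivial_isomorphism} only tells you $B=1$ fails, it does not supply any mechanism for extracting a larger bound from those invariants. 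Saying ``my proposal is essentially to invoke their analysis at this step'' concedes the entire difficulty to the very papers whose result you are supposed to be proving, so as a proof the argument is circular.

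It is also doubtful that the missing step can be filled in the form you envision. As the paper itself remarks in Section~\ref{examples_acc}, the known decidability proof is non-uniform: the pair of presentations is reduced as a whole to another decidable problem, and no reduction uniformly manipulating presentations (let alone a bound on generator-image lengths computed from the defining data) is known. Taiclin's and Grunewald--Segal's arguments go through arithmetic/elementary-theoretic machinery rather than a bounded search over generator assignments, so the ``algebraic core'' you defer to them is not something their work hands you in the form of a bound $B$. To make your strategy work you would need either an honest derivation of such a bound or, equivalently, a complete computable invariant (a $\Pi^0_1$ certificate of non-isomorphism) for finitely presented commutative monoids; that is precisely what remains unproved in your proposal.
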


\section{Isomorphism relations in the \texorpdfstring{$\emin$}{Emin}-degrees}\label{sec: isorelations and emin}

In this section, we show that $\iAG{1}$, $\iUF{1}{1}$, $\iCS{1}$, and $\iCM{1}$ are all computably equivalent to $\emin$, and that $\iAG{n}$ and $\iUF{1}{n}$ are computably equivalent to $\emin(\omega \cdot n)$.

	\subsection{Monogenic semigroups and monoids}
	With a monogenic semigroup $\A$ (generated by $X_1 = \{x\}$) one usually associates an \emph{index} and a \emph{period}, where the index is the least $n$, s.t. $x^n = x^\ell$ for some $\ell \neq n$, and the period is $k-n$ for the least $k\neq n$ with $x^n = x^k$. It is a well-known result that index and period together characterize the isomorphism types.
	
	\begin{proposition}[folklore]
		Two monogenic semigroups $\A,\B$ are isomorphic iff they have the same index and period.
	\end{proposition}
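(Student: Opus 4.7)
The plan is the standard folklore argument, in two directions. Before starting, I would recall the structural description of a monogenic semigroup with generator $x$, index $n$, and period $p$: by minimality of $n$ and the definition of $p$, the elements $x, x^2, \dots, x^{n+p-1}$ are pairwise distinct, while for all $k \geq n$ we have $x^{k+p} = x^k$. Hence $x^i = x^j$ (for $i,j \geq 1$) holds if and only if either $i=j$, or both $i,j \geq n$ and $i \equiv j \pmod{p}$. This normal form is what makes the whole argument go through.

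For the backward direction, let $\A$ be generated by $x$ with index $n$ and period $p$, and $\B$ by $y$ with the same invariants. I would define $\varphi : \A \to \B$ by $\varphi(x^i) = y^i$ for $i \geq 1$. The well-definedness follows immediately from the normal form above, since the equalities among the powers of $x$ and the powers of $y$ are governed by the same rule (depending only on $n$ and $p$). The homomorphism property is just $\varphi(x^i \cdot x^j) = \varphi(x^{i+j}) = y^{i+j} = \varphi(x^i) \cdot \varphi(x^j)$. Surjectivity is clear since $y$ generates $\B$, and injectivity follows again from the normal form (both sides have exactly $n+p-1$ elements, so surjectivity of $\varphi$ between equally-sized finite sets gives injectivity).

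For the forward direction, let $\varphi : \A \to \B$ be an isomorphism, with $\A$ of index $n$ and period $p$ with respect to generator $x$. Since $\varphi$ is a semigroup isomorphism, $\varphi(x)$ generates $\B$, and $\varphi(x^k) = \varphi(x)^k$ for every $k \geq 1$. Hence $x^i = x^j$ in $\A$ if and only if $\varphi(x)^i = \varphi(x)^j$ in $\B$. This equivalence transports the defining minimality conditions for the index and the period of $x$ to those for $\varphi(x)$, so $\varphi(x)$ has the same index $n$ and the same period $p$ as $x$; in particular $\B$ has index $n$ and period $p$.

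I do not anticipate a real obstacle here; the whole content lies in spelling out the normal form for monogenic semigroups of a given index and period, after which both directions are essentially immediate. The only minor subtlety worth being explicit about is that index and period are invariants of the semigroup rather than of the chosen generator, which is implicitly used when translating between generators via $\varphi$.
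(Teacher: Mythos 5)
Your proof is correct, and it is the standard folklore argument; the paper states this proposition without proof, so there is nothing to compare against beyond noting that your normal-form description ($x^i=x^j$ iff $i=j$, or $i,j\geq n$ and $i\equiv j \pmod p$) is exactly the structure the paper implicitly relies on in Observation 3.3 and Theorem 3.5. The only point worth making explicit is that the statement, as used in the paper, also has to cover the free monogenic semigroup (arising from an empty relation set, where index and period are undefined/infinite); that case is trivial, since the free semigroup is infinite and hence not isomorphic to any semigroup possessing an index and period, but your finiteness-based injectivity count ("both sides have exactly $n+p-1$ elements") silently assumes it away.
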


	The following observations will also be useful. They state that (1) neither index nor period increases with additional identities and (2) for a given finite set of identities, one can compute the index and period.

	\begin{observation} \label{not_increase}
		In monogenic semigroups, neither index nor period increases when taking a quotient.
	\end{observation}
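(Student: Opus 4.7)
The plan is to exploit the single defining equation $x^n = x^{n+p}$ that characterizes an index-$n$, period-$p$ monogenic semigroup, together with the standard description of equalities in such a semigroup. Let $\A$ be a monogenic semigroup generated by $x$, with index $n$ and period $p$; if $\A$ is free (so no such $n$ exists), both quantities are read as $\infty$ and the statement is vacuous, so assume both are finite. Let $\A \twoheadrightarrow \A/E$ be a quotient, and write $y := [x]_E$. Since every element of $\A$ is a power of $x$, every element of $\A/E$ is the corresponding power of $y$, so $\A/E$ is again monogenic; let $n', p'$ denote its index and period.

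For the index, the identity $x^n = x^{n+p}$ is preserved by the quotient map, hence $y^n = y^{n+p}$ holds in $\A/E$. This is an identity of the form $y^m = y^\ell$ with $m \neq \ell$, so the minimality in the definition of $n'$ immediately gives $n' \leq n$.

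For the period, I would invoke the standard description of equalities in a monogenic semigroup with index $n'$ and period $p'$: for $a, b \geq 1$, one has $y^a = y^b$ iff either $a = b$, or both $a, b \geq n'$ and $a \equiv b \pmod{p'}$. Applied in $\A/E$ to the equality $y^n = y^{n+p}$, and using the already-established $n \geq n'$, this forces $p \equiv 0 \pmod{p'}$, so $p'$ divides $p$, and in particular $p' \leq p$. The main (quite mild) obstacle is just to verify that $\A/E$ remains monogenic and that we may transport the defining equation $x^n = x^{n+p}$ through the quotient homomorphism; once these are in place, both inequalities fall out from the structure theorem for monogenic semigroups.
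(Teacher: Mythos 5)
Your argument is correct, and it reaches the conclusion by a slightly different route than the paper. Both proofs handle the index identically (push $x^{i_A}=x^{i_A+p_A}$ through the quotient and use minimality), but for the period the paper stays entirely self-contained: working inside $\B$, it derives $x^{i_B}=x^{i_B+p_A}$ by hand, juggling $x^{i_B+kp_B}$ for $k$ large enough that $i_B+kp_B\geq i_A$ and using only compatibility and transitivity of the congruence --- no structure theory is invoked beyond the definitions of index and period. You instead black-box the standard classification of equalities in a monogenic semigroup ($y^a=y^b$ iff $a=b$, or $a,b\geq n'$ and $a\equiv b \pmod{p'}$) and read off $n'\leq n$ and $p'\mid p$ from the transported relation $y^n=y^{n+p}$. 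What your approach buys is brevity and a strictly stronger conclusion (divisibility $p'\mid p$, not just $p'\leq p$, which incidentally matches the gcd formula in the paper's next observation); what it costs is reliance on a cited structure theorem whose proof consists of essentially the same congruence manipulations the paper carries out explicitly, so the two arguments differ mainly in where that work is located. Your explicit treatment of the free case (index and period read as $\infty$) is a small point the paper leaves implicit, and your observation that $\A/E$ is again monogenic and non-free (because $y^n=y^{n+p}$ with $n\neq n+p$) correctly justifies that $n'$ and $p'$ exist before you apply the classification.
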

\begin{proof}
    Clearly, adding further identities can never increase the index. So it remains to be shown that the period does not increase. Let $\A \twoheadrightarrow \B$, let $i_A,p_A$ be the index and period of $\A$ and let $i_B,p_B$ be the index and period of $\B$. We will show that $x^{i_B} = x^{i_B+p_A}$ in $\B$, which implies that the period of $\B$ is at most $p_A$.
	
	 From $x^{i_B} = x^{i_B + p_B}$, due to compatibility and transitivity of congruence relations, it follows that
\begin{align}
	 	&x^{i_B} = x^{i_B+p_B} = x^{i_B+2p_B} = ...\tag{1}\\ 
	 	&x^{i_B+p_A} = x^{i_B+p_A+p_B} = x^{i_B+p_A+2p_B} = ... \tag{2}
\end{align}
  We now pick a $k$ s.t. $i_B+kp_B \geq i_A$. Then, from $x^{i_A} = x^{i_A+p_A}$ (which holds because $x^{i_A} = x^{i_A+p_A}$ holds in $\A$ and $\B$ has all identities of $\A$), it follows by compatibility that $x^{i_B+kp_B} = x^{i_B+kp_B+p_A}$. Together with (1) and (2), we get, by transitivity, $x^{i_B} = x^{i_B+p_A}$.
	\end{proof}

\begin{observation}
	For any $e,s \in \nat$, index and period of $A = \langle x\mid  W_{e,s}\rangle_{\CS{1}}$ are computable. In particular, the index is the smallest $i$ so that $x^i = x^j$ or $x^j = x^i$ is a non-trivial relation, and the period is the greatest common divisor of all the $|i-j|$ where $x^i = x^j$ is a non-trivial relation. 
\end{observation}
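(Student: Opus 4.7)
The plan is to show the stated formulas correctly compute the index and period; computability then follows immediately since $W_{e,s}$ is a finite, explicitly given set. Let $m$ be the smallest $i$ appearing in any non-trivial relation $(x^i, x^j) \in W_{e,s}$ (i.e.\ with $i \neq j$), and $p$ the gcd of $|i - j|$ over all such non-trivial relations; if there are no non-trivial relations, the semigroup is free cyclic and the conclusion is vacuous. Write $\sim$ for the congruence on $\{x^n : n \geq 1\}$ generated by $W_{e,s}$. I must show the actual index and period of $\A$ equal $m$ and $p$, respectively.

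For the index, the relation $x^m = x^j$ directly witnesses that the index is at most $m$. For the lower bound, I would prove by structural induction on $\sim$-derivations that every derivable equality $x^a \sim x^b$ is either trivial ($a = b$) or has both $a, b \geq m$. The base case is by definition of $m$; the compatibility step only adds a non-negative constant to both exponents; and transitivity is handled by a case split on whether each intermediate equality is trivial. A completely parallel induction shows every derivable $x^a \sim x^b$ satisfies $p \mid (a - b)$, since each base relation has this property, shifts preserve $a - b$, and transitivity sums the differences. In particular the actual period $p^*$ satisfies $p \mid p^*$, so $p \leq p^*$.

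The remaining inequality $p^* \leq p$ amounts to deriving $x^m \sim x^{m+p}$, and this is the main obstacle. By Bezout, I would write $p = \sum_t u_t^+ d_t - \sum_t u_t^- d_t$ with $d_t = j_t - i_t > 0$ and $u_t^{\pm} \geq 0$. The subtlety is that applying relation $(i_t, j_t)$ requires the current exponent to be at least $i_t$, but $m$ may be strictly smaller than some $i_t$, so I cannot directly combine the relations starting at $x^m$. I would circumvent this by picking $t_0$ with $i_{t_0} = m$ and iterating that relation to derive $x^m \sim x^{m + K d_{t_0}}$ for $K$ large enough that $m + K d_{t_0} \geq \max_t i_t$; shifting the same derivation by $p$ yields $x^{m+p} \sim x^{m + p + K d_{t_0}}$. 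At these large exponents every relation is applicable, and applying each relation $t$ either $u_t^+$ times on the first endpoint or $u_t^-$ times on the second shows both are equivalent to $x^{m + K d_{t_0} + A}$ where $A = \sum_t u_t^+ d_t$. Chaining these equivalences by transitivity yields $x^m \sim x^{m+p}$, completing the proof.
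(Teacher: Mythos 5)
Your proposal is correct, and in fact it supplies more than the paper does: the paper states this Observation without any proof, treating it as routine, so there is no official argument to compare against. Your decomposition is the right one. The easy halves (index $\le m$, and the two inductions on derivations showing that every non-trivial derivable equality $x^a \sim x^b$ has $a,b \ge m$ and $p \mid (a-b)$) are exactly the ``folklore'' content the authors presumably had in mind, and they immediately give index $= m$ and $p \mid p^*$. The only genuinely delicate point is $x^m \sim x^{m+p}$, since a relation $(x^{i_t}, x^{j_t})$ can only be applied at exponents $\ge \min(i_t,j_t)$, which may exceed $m$; your fix --- iterate the relation realizing the minimum to climb from $x^m$ (and, shifted by $p$, from $x^{m+p}$) to a common height above $\max_t i_t$, then realize the Bezout combination $p = \sum_t u_t^+ d_t - \sum_t u_t^- d_t$ by applying the positively-weighted relations to one endpoint and the negatively-weighted ones to the other, meeting at $x^{m+Kd_{t_0}+A}$ --- is valid, since every application along the way occurs at an exponent where the relevant relation is available, and symmetry plus transitivity then yield $x^m \sim x^{m+p}$, hence $p^* \le p$ and so $p^* = p$. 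Your handling of the degenerate case (no non-trivial relations, so the free monogenic semigroup) also matches how the Observation is actually used in the reduction of Theorem 3.2, where index and period are only computed once an equation has appeared. In short: correct, complete, and it fills a gap the paper leaves implicit.
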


With these observations, we are able to show that the isomorphism relation on monogenic semigroups is computably equivalent to $\emin$.

	\begin{theorem}\label{1iso_red_Emin}
	$\iCS{1} \equiv_c \emin$.
	\end{theorem}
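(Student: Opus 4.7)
The plan is to establish both directions of the equivalence using the classification of monogenic commutative semigroups by their (index, period) pair, together with the fact that the free monogenic semigroup forms its own isomorphism class. Crucially, by the two preceding observations, the (index, period) pair is computable from any finite-stage approximation of a presentation and can only decrease componentwise as more identities are enumerated.

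For $\iCS{1} \leq_c \emin$, I would define a computable $f$ as follows: given $e$, enumerate $W_{f(e)}$ stage by stage. At each stage $s$, compute the current index $i_s$ and period $p_s$ of $\langle x \mid W_{e,s}\rangle_{\CS{1}}$; if the presentation is no longer free at this stage, enumerate a code $c(i_s,p_s)$ into $W_{f(e)}$, where $c \colon \N_{\geq 1}^2 \to \N$ is a fixed computable bijection that is strictly monotone in the componentwise order (for instance, order pairs first by $i+p$, breaking ties lexicographically). By Observation \ref{not_increase}, the sequence $(i_s,p_s)$ is non-increasing in each coordinate and therefore stabilizes at a limiting pair $(i_\infty,p_\infty)$, so the enumerated codes form a non-increasing sequence and $\min W_{f(e)} = c(i_\infty,p_\infty)$. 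If the semigroup is free at every stage, then $W_{f(e)} = \emptyset$. Since two monogenic semigroups are isomorphic iff their (index, period) pairs agree or both are free, this yields $e \iCS{1} i \iff f(e) \emin f(i)$.

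For $\emin \leq_c \iCS{1}$, I would define $g$ so that, given $e$, each element $k$ enumerated into $W_e$ triggers the relation $x^{k+1} \approx x^{k+2}$ to be enumerated into $W_{g(e)}$. If $W_e = \emptyset$, then $\langle x \mid W_{g(e)}\rangle_{\CS{1}}$ is the free monogenic semigroup; otherwise, letting $m = \min W_e$, the strongest relation imposed is $x^{m+1} \approx x^{m+2}$, which forces $x^{m+1} = x^{m+2} = x^{m+3} = \cdots$, producing a semigroup of index $m+1$ and period $1$. All other relations $x^{k+1} \approx x^{k+2}$ with $k > m$ are automatic consequences, so the isomorphism type of $\langle x \mid W_{g(e)}\rangle_{\CS{1}}$ depends only on $\min W_e$ (or its absence). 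Hence $e \emin i \iff g(e) \iCS{1} g(i)$.

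The main technical point in both directions is ensuring that the chosen invariants track one another monotonically: in the forward direction, this reduces to the componentwise monotonicity and injectivity of the encoding $c$, which guarantees that the code of the limiting pair is exactly the minimum of $W_{f(e)}$; in the reverse direction, one must confirm that the redundant relations $x^{k+1} \approx x^{k+2}$ for $k > m$ neither further collapse the semigroup nor alter its period, which follows immediately from Observation \ref{not_increase} and the explicit description of $\langle x \mid x^{m+1} \approx x^{m+2}\rangle_{\CS{1}}$.
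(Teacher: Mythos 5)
Your proposal is correct and follows essentially the same route as the paper: reduce to $\emin$ by tracking the (index, period) invariant, which is stage-computable and componentwise non-increasing by Observation \ref{not_increase}, under an encoding monotone in the componentwise order; and reduce from $\emin$ by enumerating a single collapsing relation per element of $W_e$, so that only the least element determines the isomorphism type. Your version is slightly more careful on two points the paper glosses over (the explicit monotone encoding $c$, and shifting exponents to avoid $x^0$), but the argument is the same.
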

\begin{proof}
	
	``$\leq_c$": We construct a computable $f:\nat \to \nat$, that maps $\iCS{1}$-instances (indices of c.e.\ sets, with the sets encoding presentations of monogenic semigroups) to $\emin{}$-instances (indices of c.e. sets), s.t.\
	\[
	e \iCS{1} i \iff f(e) \emin f(i) \iff \min(W_{f(e)}) = \min(W_{f(i)}) \text{~or~} W_{f(e)} = W_{f(i)} = \emptyset.
	\]
	Let $\CS{1}[e]$ be any monogenic semigroup. We build $W_{f(e)}$ in stages as follows:
	\begin{itemize}
		\item As long as $W_e$ stays empty, leave $W_{f(e)}$ empty as well.
		\item If an equation enters $W_e$, we calculate the index and period of the encoded monogenic semigroup, and put them, encoded as a pair, in $W_{f(x)}$. Index and period are computable because at each stage there are only finitely many elements already in $W_e$.
		\item Since, by Observation \ref{not_increase}, new equations cannot make the index and period increase, we will eventually stabilize to the true values for them.
	\end{itemize}
	Now any $W_e$, $W_i$ have the same index and period if and only if $W_{f(e)}$ and $W_{f(i)}$ have the same least element, encoding index and period (to be precise, we have to choose a paring function that descreases whenever one of the values decrease).

	``$\geq_c$": This time we construct a computable $f$ that maps $\emin{}$-instances to $\iCS{1}$-instances, s.t.\
    \[
	e \emin i \iff f(e) \iCS{1} f(i).
	\]
	
	From a c.e.\ set $W_e$, we construct $W_{f(e)}$ as follows:
	\begin{itemize}
		\item As long as $W_e$ stays empty, leave $W_f(e)$ empty as well (i.e.\ $\CS{1}[f(e)]$ is the free monogenic semigroup).
		\item  For some $n$ entering $W_e$ we put the code of $x^n = x^{n+1}$ in $W_{f(e)}$.
		\item Since we will eventually hit the least element $n$ of $W_e$, the index and period of $\CS{1}[f(e)]$ stabilizes to $\langle n, 1 \rangle$.
	\end{itemize}
  Now any two c.e.\ sets have the same least element $n$ if and only if the resulting monogenic semigroups are isomorphic.
\end{proof}

\begin{theorem}
    $\iCM{1} \equiv_c \emin$. 
\end{theorem}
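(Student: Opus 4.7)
The plan is to mirror the proof of Theorem \ref{1iso_red_Emin} almost verbatim, after setting up the analogous classification for monogenic commutative monoids. Recall that a monogenic commutative monoid, generated by $x$ (with identity $1 = x^0$), is determined up to isomorphism by its index and period: the index $i \geq 0$ is the least $n$ such that $x^n = x^\ell$ for some $\ell \neq n$, and the period $p \geq 1$ is the corresponding least $|\ell - n|$. The only difference from the semigroup setting is that the index can now be $0$; in that case the monoid is a cyclic group of order $p$. The analogue of Observation \ref{not_increase} (index and period do not increase under quotients) goes through with the same transitivity/compatibility argument, as does computability of index and period from any finite set of identities—one simply also checks identities involving the constant $1$.

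For the $\leq_c$ direction, given an index $e$, build $W_{f(e)}$ by: leaving it empty as long as no non-trivial identity has appeared, and otherwise at each stage computing the current (index, period) pair from the finite information in $W_{e,s}$ and enumerating its code into $W_{f(e)}$. Choose a pairing function $\langle \cdot, \cdot\rangle$ that is monotone with respect to the componentwise order on $\nat \times \{1, 2, \ldots\}$, so that decreasing index or period produces a smaller code. Since by the analogue of Observation \ref{not_increase} both values monotonically decrease and stabilize at the true (index, period) of $\CM{1}[e]$, the minimum of $W_{f(e)}$ encodes exactly that pair; hence $e \iCM{1} i \iff f(e) \emin f(i)$.

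For the $\geq_c$ direction, given $W_e$, construct $W_{f(e)}$ by leaving it empty while $W_e$ is empty (yielding the free monogenic monoid) and, whenever some $n$ enters $W_e$, enumerating the encoding of $x^n = x^{n+1}$ into $W_{f(e)}$. The smallest such $n$ entering $W_e$ forces index $\min(W_e)$ and period $1$, while any larger $n$ entering later imposes no further collapse. Thus two such sets $W_e, W_i$ have the same minimum (or are both empty) iff $\CM{1}[f(e)] \cong \CM{1}[f(i)]$.

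There is essentially no obstacle here: the argument is a direct transfer of the semigroup proof, and the only thing worth double-checking is that the $\geq_c$ construction does not accidentally collapse the monoid further (e.g., identify $1$ with some $x^k$), which it does not since $x^n = x^{n+1}$ for $n \geq 1$ only forces index at most $n$ and period $1$, never index $0$.
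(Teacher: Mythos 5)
Your proposal is correct and takes essentially the same route as the paper, which simply notes that the proof of Theorem \ref{1iso_red_Emin} carries over verbatim with the identity acting as the $0$-th power of the generator (so the index may now be $0$, exactly the adjustment you make). The extra care you take in the $\geq_c$ direction (checking that $x^n = x^{n+1}$ never forces index $0$ for $n \geq 1$, while $n=0$ harmlessly yields the trivial monoid) is a fine but unneeded elaboration of the same argument.
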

The proof of this is identical to the proof for $\iCS{1}$, except now we have the identity behaving as the ``0-th'' power of the generator. We could also, in a very similar way, show that $\iAG{1}$ and $\iUF{1}{1}$ are in this degree too (it is not hard to show that $\iUF{1}{1}$ also admits a characterization by index and period). However, that follows from the more general case of $n$ generators, which we investigate in the remaining part of this section.

\subsection{\texorpdfstring{$\UF{1}{n}$}{UF1n}}\label{uf_section}

Next, we analyze the complexity of the classes $\UF{1}{n}$ of algebras with a single unary function symbol and show that they coincide with $\emin(\omega \cdot n)$. In preparation for this, we need some observations on the structure of algebras in $\UF1n$. We view these algebras through directed graphs where the edge relation is given by the unary function symbol $f$. Formally, given $\A = (A,f) \in \UF{1}{n}$, we define the \emph{graph associated with} $\A$ as $G(\A) := (A, E_\A)$, where $E_\A := \{(u,v) \in A \times A \mid  f(u) = v\}$.
\begin{observation}
    For a given $n$, let $\A, \B \in \UF{1}{n}$. Then, $\A \cong B$ iff $G(\A) \cong G(\B)$.
\end{observation}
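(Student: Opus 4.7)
The plan is to unfold the two directions directly from the definitions. An algebra homomorphism from $\A = (A, f^\A)$ to $\B = (B, f^\B)$ is a map $\varphi: A \to B$ satisfying $\varphi(f^\A(a)) = f^\B(\varphi(a))$ for all $a \in A$; a graph homomorphism between $G(\A)$ and $G(\B)$ is a map preserving the edge relations $E_\A, E_\B$. The crucial observation is that, because $f^\A$ is a \emph{total function}, the graph $G(\A)$ has the property that for each vertex $a$ there is exactly one out-edge, and its endpoint is $f^\A(a)$. So the edge relation encodes $f^\A$ losslessly, and a bijection that preserves edges is the same thing as a bijection that commutes with $f$.

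For the forward direction, I would take an algebra isomorphism $\varphi: A \to B$ and note that $(a_1,a_2) \in E_\A$ means $f^\A(a_1) = a_2$, hence $f^\B(\varphi(a_1)) = \varphi(f^\A(a_1)) = \varphi(a_2)$, so $(\varphi(a_1), \varphi(a_2)) \in E_\B$; the same computation applied to $\varphi^{-1}$ gives the converse, so $\varphi$ is a graph isomorphism.

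For the reverse direction, let $\varphi: A \to B$ be a graph isomorphism. For any $a \in A$, the pair $(a, f^\A(a))$ lies in $E_\A$, so $(\varphi(a), \varphi(f^\A(a)))$ lies in $E_\B$, which by definition of $E_\B$ forces $f^\B(\varphi(a)) = \varphi(f^\A(a))$. Hence $\varphi$ commutes with the unary function symbol and is an algebra isomorphism.

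The statement is essentially definitional; no real obstacle arises. The one thing worth flagging is the reliance on $f$ being a \emph{function} (single-valued and total): this is what makes $E_\A$ in bijective correspondence with the graph of $f^\A$, so that a bijection preserving $E_\A$ automatically preserves $f^\A$ (rather than, say, just a binary relation extending the graph of $f^\A$). Once this is noted, the two implications are one-line computations.
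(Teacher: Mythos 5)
Your proposal is correct and takes essentially the same route as the paper, which simply notes that for bijections the homomorphism condition $\varphi(f^\A(u)) = f^\B(\varphi(u))$ is equivalent to edge-preservation for the associated graphs; your write-up just spells out both directions, including the observation that totality/single-valuedness of $f$ is what makes the edge relation encode $f$ faithfully.
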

\begin{proof}
It is sufficient to observe that the homomorphism condition 
\[
\varphi(f^\A(u)) = f^B(\varphi(u))
\]
for algebra isomorphisms $\varphi: \A \to \B$ is equivalent to the condition
\[
(u,v) \in E_{\A} \iff (\psi(u),\psi(v)) \in E_{\B}
\]
for isomorphisms $\psi: G(\A) \to G(\B)$ of directed graphs.
\end{proof}
For a given algebra $\A$, we note that two generators $a,b$ are in the same connected component of $G(\A)$ if and only if there are $i,j$ s.t.\ $f^i(a) = f^j(b)$ in $\A$.
In the following, we call an infinite sequence $v_0,v_1,...$ of distinct vertices, with an edge from $v_i$ to $v_{i+1}$ for all $i \geq 0$, a \emph{ray}. Furthermore, for a directed graph $G = (V, E)$ and a vertex $v \in V$, we define $G \restriction v$ to be the subgraph induced by $\{w \in V \mid v \text{~is reachable from~} w\}$. The following basic observations about the components will help us characterize them. 
\begin{observation}\label{uf_observations}
Let $\A \in \UF{1}{n}$. 
\begin{enumerate}
    \item Any vertex of $G(\A)$ has outdegree (edges leaving the vertex) of exactly $1$. 
    \item Infinite components of $G(\A)$ cannot contain a cycle.
    \item Let $G^\prime$ be an infinite component of $G(\A)$. Then, there exists a vertex $v\in G'$ s.t.\ $v$ is reachable from any generator and is \emph{the least such} in the sense that, for any vertex $u \neq v$ that is reachable from any generator, $v$ is not reachable from $u$. Also, the isomorphism type of $G^\prime$ is characterized by the finite graph $G^\prime \restriction v$.
    
\end{enumerate}
    
\end{observation}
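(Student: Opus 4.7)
The plan is to exploit two structural facts: every element of $\A\in\UF{1}{n}$ has the form $f^k(x_i)$ for some generator $x_i$ and some $k\ge 0$, and each vertex of $G(\A)$ has a unique outgoing edge. Part (1) is immediate: since $f^\A$ is a single-valued function, each vertex $u$ has exactly one image $f^\A(u)$ and therefore exactly one outgoing edge. For part (2) I would argue by contradiction: if an infinite component $G'$ contained a cycle $C$, then for each of the finitely many generators $x_{i_1},\dots,x_{i_m}$ lying in $G'$, outdegree $1$ forces the forward orbit $x_{i_j}, f(x_{i_j}), f^2(x_{i_j}),\dots$ to fall into $C$ after finitely many steps and to stay there, so it takes only finitely many distinct values. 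Since every vertex of $G'$ is of the form $f^k(x_{i_j})$ for some $j,k$, this would force $G'$ to be finite, a contradiction.

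For part (3), the central preliminary lemma is that within any component, the forward orbits of any two vertices must eventually coincide. I would establish this by induction on the length of an undirected path in $G'$ between the two vertices: the base case of a single forward or backward edge is immediate, and the inductive step chains two ``meeting points'' together by applying $f$ far enough on each side to equalise the iteration counts. Applying this to the generators $x_{i_1},\dots,x_{i_m}$ in $G'$ yields a common forward successor of all of them. The set $S$ of vertices reachable from \emph{every} generator is then nonempty; since it is forward-closed in the acyclic component (by part (2)), $S$ is exactly a ray $v, f(v), f^2(v),\dots$. The initial vertex $v$ is reachable from every generator, and for any other $u = f^k(v)\in S$ with $k\ge 1$, reachability of $v$ from $u$ would give $f^{k+l}(v)=v$, a forbidden cycle. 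So $v$ is least in the required sense.

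It remains to see that $G'\restriction v$ is finite and determines $G'$. For each generator $x_{i_j}$ let $k_j$ be the unique exponent with $f^{k_j}(x_{i_j}) = v$. Then $G'\restriction v = \{f^k(x_{i_j}) : 1\le j \le m,\ 0 \le k \le k_j\}$: any $f^k(x_{i_j})$ with $k>k_j$ equals $f^{k-k_j}(v)$ and so lies strictly after $v$ on its acyclic forward ray, hence cannot reach $v$. This set is finite. The remainder of $G'$ is precisely the infinite ray $v,f(v),f^2(v),\dots$ attached at $v$, so any isomorphism between two truncations $G'_1\restriction v_1$ and $G'_2\restriction v_2$ sending $v_1\mapsto v_2$ extends canonically by $f^k(v_1)\mapsto f^k(v_2)$ to an isomorphism of the full components. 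The main obstacle I anticipate is a clean formalisation of the ``forward orbits meet'' lemma: the induction on undirected paths must handle both edge orientations, and the transitivity step (aligning two pairwise-meeting rays into a single common point) needs a careful choice of iteration counts on each side to make the equations close up.
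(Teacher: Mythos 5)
Your proof is correct and takes essentially the same route as the paper's: outdegree one from functionality of $f$, the generators' forward orbits being trapped in a cycle to force finiteness in (2), a common meeting point of the generator orbits giving $v$, acyclicity giving leastness and finiteness of $G^\prime \restriction v$, and the remainder of the component being a single ray along which an isomorphism of the truncations extends (your explicit ``orbits eventually meet'' lemma is just the fact the paper records for generators right before the observation). The only point the paper makes explicit that you leave implicit is that any isomorphism of the truncations automatically sends $v_1$ to $v_2$, since these are the unique vertices with no outgoing edge inside the truncations --- a one-line remark that completes your final extension step.
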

\begin{proof}
    \begin{enumerate}
        \item For $f$ to be well-defined, any vertex must have outdegree $1$.
        \item Assume that a connected component has a cycle. Then there is a generator $a\in G'$ s.t.\ $f^i(a) = f^j(a)$ for some $i < j$. Any other generator in the component must have a path into the cycle. Therefore, the component is finite.
        \item For any two generators $a,b\in G'$, there exist $i,j$ s.t.\ $f^i(a) = f^j(b)$. So there exists a vertex reachable from any generator. $u$ can be chosen to be the least: if it is reachable from some $u$ that is reachable from any generator, we may replace $v$ by $u$ --- and this process must stop, as any $f$-predecessor chain must be finite. 
        Now, observe that $G^\prime \restriction v$ must be finite. Furthermore, the edges and vertices from $G^\prime$ that are not in $G^\prime \restriction v$ form exactly one single ray, originating at $v$. So assume we have two such graphs $G^\prime, F^\prime$ associated to two infinite components, and let $v$ and $w$ be the respective least vertices reachable from any generator (in the component). Then an isomorphism from $G^\prime \restriction v$ to $F^\prime \restriction w$ must send $v$ to $w$ (they are the only vertices without outgoing edges) and can therefore be extended to an isomorphism from $G^\prime$ to $F^\prime$. Thus, the isomorphism type of infinite components is characterized by these finite subgraphs.
    \end{enumerate}
\end{proof}

As a corollary of (1), note that for any vertex of $G(\A)$, there is a unique ray starting from that vertex. Figure \ref{fig:uf_graph_example} shows an example of a finite and an infinite component and how the finite graph characterizing an infinite component looks like.

\begin{figure}
\begin{center}
    \begin{tikzcd} 
          & f(f(x_2)) \arrow[bend right=20,ld]& & & &{\color{red}\vdots} &  \\
          f(f(f(x_2)))) \arrow[bend right=20,rd]&  & f(x_2) \arrow[bend right=20,lu]& & &{\color{red}f(f(x_6))} \arrow[u, color=red]& & & \\
          & f(f(x_1)) \arrow[bend right=20,ru]& x_2 \arrow[u]& & &f(x_6) \arrow[u,color=red]& & &\\
          & f(x_1) \arrow[u]& & &f(x_4) \arrow[ru]&f(f(x_5)) \arrow[u] & x_6 \arrow[lu]\\
          f(x_0) \arrow[ru]& x_1 \arrow[u]& &f(x_3) \arrow[ru] & x_4 \arrow[u] &f(x_5) \arrow[u]\\
          x_0 \arrow[u]& & &x_3 \arrow[u] & &x_5 \arrow[u] & x_7 \arrow[lu]\\
    
\end{tikzcd}
\end{center}
\caption{An example of an algebra in $\UF{1}{\fg}$. There are two components; the one on the left is finite and the one on the right is infinite. In the infinite component, the part that is not colored red (i.e.\ everything below and including $f(x_6)$) is the finite graph that characterizes the component.}
\label{fig:uf_graph_example}
\end{figure}
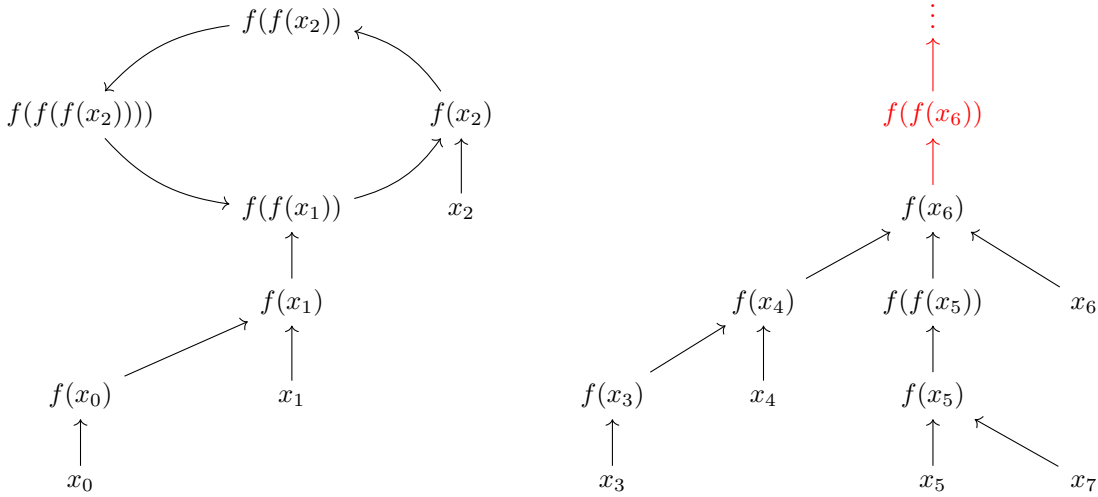

Based on these observations, we now give an invariant for algebras in $\UF{1}{n}$. First, we need a way of encoding finite directed graphs into natural numbers, s.t.\ if the number of vertices decreases, the code also decreases. For this, take an enumeration of all (isomorphism types of) finite directed graphs, starting with graphs of size $1$, then size $2$, and so on. 
Then, let $m(\A)$ be the number of infinite components in $G(\A)$ and let $I(\A)$ be a code for the finite graph that is $G(\A)$ but with each infinite component replaced by its characterizing finite subgraph, according to Observation \ref{uf_observations} (3). Notice that these finite graphs characterizing infinite components still cannot contain a cycle and thus will be different from the finite components in $G(\A)$. So, clearly $I(\A)$ would already characterize the isomorphism type, but we will take as invariant the pair of $m(\A)$ and $I(\A)$ because, as we will see, it interacts nicely with $\emin(\omega^2)$.

\begin{proposition}\label{uf_graph_computable}
Let $\A \in \UF{1}{n}$. Then, $m(\A)$ and $I(\A)$ can be computed from a finite presentation of $\A$. 
\end{proposition}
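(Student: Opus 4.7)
The plan is to give an effective procedure that, from a finite presentation $\langle X_n \mid R \rangle_{\UF{1}{}}$ of $\A$ with $R$ finite, reconstructs the structure of $G(\A)$ up to isomorphism on each component, and then reads off $m(\A)$ and $I(\A)$. The free algebra on $X_n$ in $\UF{1}{}$ is the disjoint union of $n$ rays with vertex set $\{(i,k):i<n,\ k\ge 0\}$ and edges $(i,k)\to(i,k+1)$; the congruence generated by $R$ glues these rays. For each ordered pair $(i,j)$, set $S_{ij} := \{(a,b) : f^a(x_i)\sim f^b(x_j)\}$. Closure under $f$ gives that $S_{ij}$ is closed under the diagonal shift $(a,b)\mapsto(a+1,b+1)$, so $S_{ij}$ is a union of diagonals, each uniquely determined by a slope $d=b-a\in\Z$ and a minimum starting coordinate $a_0\in\N$. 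Each relation in $R$ contributes one such diagonal, and I would then saturate the $S_{ij}$'s under symmetry and transitivity.

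The key structural dichotomy driving termination of the saturation is the following: if, for $i\ne j$, $S_{ij}$ ever contains two diagonals of distinct slopes, then aligning them by a common shift and composing with the symmetric image forces a pair $(a,b)$ with $a\ne b$ to appear in $S_{ii}$ (or $S_{jj}$), i.e.\ a self-identification on ray $i$ (or ray $j$); by Observation~\ref{uf_observations}(2), this confines the corresponding component to finite size. Consequently, in the absence of cycles each $S_{ij}$ is a single diagonal, describable by at most $n^2$ pieces of finite data. Whenever a self-identification does arise on some ray, the index and period on that ray are computed exactly as in the monogenic case (the index is the least depth at which a self-identification occurs, the period is the gcd of the separations), and the entire component containing that ray is flagged as finite. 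At most $n$ cycle detections can occur, with only finitely many transitivity deductions between them, so the saturation terminates in finite time.

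With the saturated data in hand, I would read off $G(\A)$ component by component: $x_i$ and $x_j$ lie in the same component iff $S_{ij}\ne\emptyset$; a component is finite iff one of its rays carries a self-identification, in which case the full graph is reconstructed from the indices, periods, and cross-ray offsets; otherwise it is infinite, and by Observation~\ref{uf_observations}(3) the unique least vertex $v$ reachable from every generator in the component is computed from the minimum starting coordinates of the relevant $S_{ij}$'s (fixing a base generator $x_i$, $v = f^a(x_i)$ where $a$ is the maximum over $j$ in the component of the minimum first coordinate in $S_{ij}$), after which $G(\A)\restriction v$ is enumerable as a finite graph. Then $m(\A)$ is the number of infinite components, and $I(\A)$ codes the finite graph obtained by keeping each finite component intact and replacing each infinite one with its $G(\A)\restriction v$.

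The main obstacle is establishing the saturation termination rigorously---specifically, verifying that the slope-dichotomy genuinely bounds the number of essentially new diagonals that transitivity can introduce before a cycle is detected. Once this is carefully checked, the remaining steps are elementary bookkeeping on the finite combinatorial data produced by the saturation.
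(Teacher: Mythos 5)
Your diagonal/slope framework is a genuinely different route from the paper's (the paper computes $m(\A)$ and $I(\A)$ by induction on the equations of the finite presentation, updating the characterizing finite graphs case-by-case as each identity is added, and for components known to be finite it first finds a cycle and then exhaustively computes all collapses on an explicit finite vertex set), but as written your argument has a genuine gap at exactly the point you flag yourself: the termination \emph{and completeness} of the saturation are asserted, not proved. The needed argument is available but absent: in the cycle-free regime each $S_{ij}$ can carry at most one slope (a second slope in $S_{ij}$, $i\neq j$, or any nonzero slope in $S_{ii}$, yields a self-identification, whence the component is finite by Observation~\ref{uf_observations}); on that unique diagonal the starting coordinate is a natural number that can only decrease under unions, and there are only $n^2$ ordered pairs, so between the at most $n$ cycle detections only finitely many state changes occur and a fixpoint is reached; one must then still verify that this fixpoint is the whole congruence, i.e.\ that every derivation (symmetry, transitivity, $f$-application) is simulated by the diagonal calculus, noting that transitive compositions of shift-closed sets remain shift-closed. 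Since you present this as ``the main obstacle'' and leave it unchecked, the central step of the proof is missing.

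A second concrete weak point is the finite-component case. At the moment a self-identification is first detected, the ``least depth'' and ``gcd of the separations'' you have seen so far need not be the true index and period: subsequent deductions, including ones routed through other generators of the same component, can lower the index, refine the period, and change where the rays merge; moreover, once a cycle is present $S_{ij}$ is no longer a single diagonal, so the $n^2$-pieces-of-finite-data bookkeeping that drives your termination argument breaks precisely on these components. To complete the proof you must explain how the \emph{exact} structure of a component flagged finite is computed and when its data have stabilized---for instance, as in the paper, by bounding its vertex set by an explicit finite set once a cycle is found and running a terminating congruence-closure (collapse computation) there, together with an argument that no identification between these vertices requires passing outside that set. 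Without these two pieces---saturation termination/completeness and exact reconstruction of finite components---the proposal is an outline rather than a proof, although the outline itself is salvageable.
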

\begin{proof}
    To compute $m(\A)$, just keep track of which generators are in the same connected component. This can be done computably.
    
    For $I(\A)$, we show computability inductively on the finite set of equations in the presentation. If there are no equations, each of the $n$ components in $G(\A)$ is a ray. Thus, $I(\A)$ is $n$ copies of the graph that has a single vertex and no edges. For the induction step, assume that we have an invariant consisting of the finite connected components $I_0, ..., I_k$, each of them corresponding to a (not necessarily finite) connected component $G_0,..., G_k$ of $G(\A)$. Assume a (non-trivial) equation is added to the presentation, collapsing a vertex $u \in G_i$ with a vertex $v \in G_j$. We distinguish between the following cases: 
    \begin{enumerate}  
        \item $i \neq j$, both $G_i$ and $G_j$ are infinite,
        \item $i = j$, $G_i=G_j$ is infinite,
        \item $i \neq j$, both $G_i$ and $G_j$ are finite,
        \item $i = j$, $G_i=G_j$ is finite,
        \item $i \neq j$, $G_i$ is infinite, $G_j$ is finite. 
        
    \end{enumerate}  
    Observe that in each of these cases, after the collapse, the two collapsed vertices will be in the same component $G^\prime$. In $G(\A)$, $G^\prime$ replaces $G_i$ and $G_j$, while all other components stay the same. So it is sufficient to show that we can compute the invariant $I^\prime$ of $G^\prime$ from $I_i$ and $I_j$.
    
    If we know that $G^\prime$ is finite, we can compute it (and thus $I^\prime$) as follows: First, we find a cycle, by enumerating equalities between vertices until we see a cycle, i.e.\ we see that $f^k(x) = f^l(x)$ for some generator $x$ and $k,l \geq 0, k\neq j$. So we can look at the finite set of vertices consisting of the vertices generated by $x$ up to the cycle and the vertices generated by a different generator in the component up to where it connects to something $x$-generated. Then, it is easy to see that we can just exhaustively compute all collapses (all equations in the presentation and their consequences) between this set of vertices. So, this takes care of cases 3-5.

    For case 1, it is easy to see that $G^\prime$ is obtained from $G_i$ and $G_j$ by merging the two (unique) rays $u = u_0,u_1,...$ and $v = v_0,v_1,...$ starting from $u$ and $v$. So $I^\prime$ is obtained from $I_i$ and $I_j$ as follows.
    \begin{itemize}
        \item If $u$ and $v$ are already in $I_i$ and $I_j$ respectively, we just merge the parts of the rays that are in $I_i$ and $I_j$.
        \item Otherwise, if $u \notin I_i$, we first add the initial segment of the ray starting at $I_i$ up until (including) $u$. We similarly modify $I_j$ if $v \notin I_j$. Then, it is sufficient to just merge $u$ and $v$ (since at least one of the two does not have a successor).
    \end{itemize} 
    
    For case 2, two things can happen. If $u$ and $v$ are the same distance from the vertex that joins the two (unique) rays starting from $u$ and $v$, we can again just merge them up to that vertex. If not, we know that there will be a cycle, and we can proceed as in cases 3-5. Thus, we can for all cases compute the invariant that results from adding the equation.
\end{proof}

In the following, $<_\mathrm{lex}$ denotes the \emph{lexicographic order on pairs}, defined as usual by $(x_1,y_1) <_\mathrm{lex} (x_2,y_2) \iff x_1 < x_2 \lor (x_1 = x_2 \land y_1 < y_2)$.

\begin{observation}\label{obs:uf_graph_decreasing}
    Let $\A, \B \in \UF{1}{n}$, with $\A$ finitely presented by $\langle X_n \mid R \rangle$ and $\B$ finitely presented by $\langle X_n \mid S \rangle$ and with $\approx_R \subsetneq \approx_S$, where $\approx_R,\approx_S$ are the congruence relations generated by $R,S$, respectively. Then, $(m(\B),I(\B)) <_{\mathrm{lex}} (m(\A),I(\A))$.
\end{observation}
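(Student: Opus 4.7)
The plan is to reduce the general statement to the case of adjoining a single equation and then invoke the case analysis already carried out inside the proof of Proposition \ref{uf_graph_computable}. The key sublemma to establish is: if $\A'$ is obtained from $\A$ by collapsing two distinct vertices $u, v \in G(\A)$ (that is, by adjoining an equation $s \approx t$ with $[s]_{\approx_R} = u \neq v = [t]_{\approx_R}$), then $(m(\A'), I(\A')) <_\mathrm{lex} (m(\A), I(\A))$. Once the sublemma is in hand, the proposition follows by chaining: since $R$ and $S$ are finite and $\approx_R \subsetneq \approx_S$, one lists $S \setminus R = \{(s_1, t_1), \dots, (s_k, t_k)\}$ and forms the sequence $R = R_0 \subseteq R_1 \subseteq \dots \subseteq R_k$ with $R_{j+1} = R_j \cup \{(s_{j+1}, t_{j+1})\}$ and $\approx_{R_k} = \approx_S$. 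At each step, either the new pair is already derivable (so $(m, I)$ is unchanged) or the sublemma applies and $(m, I)$ strictly drops; at least one step must be a genuine collapse because $\approx_R \subsetneq \approx_S$, yielding the strict inequality.

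To prove the sublemma, I would verify each of the five cases (1)--(5) of the induction step in Proposition \ref{uf_graph_computable}. In Case 1, two infinite components merge into a single infinite one, so $m$ drops. Case 2 (both vertices in one infinite component) bifurcates: if the forward rays of $u$ and $v$ reach their first common vertex at equal distances, the forced identifications $f^i(u) = f^i(v)$ are consistent and the characterizing finite subgraph strictly shrinks by at least one vertex (so $I$ decreases while $m$ is fixed); otherwise unequal distances yield $f^i(v) = f^j(v)$ for some $i \neq j$, creating a cycle and turning the component finite (so $m$ drops). Cases 3 and 4 concern finite components only, and collapsing two vertices together with all forced consequences produces a finite graph with strictly fewer vertices, so $I$ strictly decreases under the size-first encoding with $m$ fixed. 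In Case 5, the forced identifications $f^i(u) = f^i(v)$ push the infinite ray from $u$ onto the eventually periodic trajectory from $v$, making the merged component finite so that $m$ strictly drops.

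The principal obstacle is the case analysis inside Case 2 (and the analogous reasoning in Case 5), where one must carefully track the forced identifications $f^i(u) = f^i(v)$ along the unique outgoing rays and argue that the dichotomy between ``rays merge consistently'' and ``rays collide into a cycle'' is exhaustive. The fact that every vertex of $G(\A)$ has outdegree exactly $1$, recorded in Observation \ref{uf_observations}(1), is what keeps this dichotomy clean and the bookkeeping tractable; it also rules out any hidden subcase in which the two forward rays could diverge again after meeting.
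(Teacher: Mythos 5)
Your proposal is correct and follows essentially the same route as the paper's own proof: reduce to adjoining a single non-trivial equation (then chain/induct), and run the same five-case analysis on which components the two collapsed vertices lie in, concluding that $m$ drops in cases 1 and 5 and in the cyclic subcase of case 2, while $I$ drops with $m$ fixed in cases 3, 4 and the consistent-merge subcase of case 2. If anything, you supply slightly more justification than the paper (which defers case 2 to the proof of Proposition \ref{uf_graph_computable} and simply asserts cases 1 and 5), so there is nothing to fix.
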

\begin{proof}
    W.l.o.g.\ we assume that $S$ contains exactly one additional non-trivial (i.e.\ not already implied by $R$) equation. From that, the observation follows by induction. 
    
    Assume that $I(\A)$ is a code for the graph consisting of the finite connected components $I_0, ..., I_k$, each of them corresponding to a (not necessarily finite) connected component $G_0,..., G_k$ of $G(\A)$. Assume the additional equation in the presentation of $\B$ collapses a vertex from $G_i$ with a vertex from $G_j$. 
    Again, we distinguish between the following cases: 
    \begin{enumerate}  
        \item $i \neq j$, both $G_i$ and $G_j$ are infinite,
        \item $i = j$, $G_i=G_j$ is infinite,
        \item $i \neq j$, both $G_i$ and $G_j$ are finite,
        \item $i = j$, $G_i=G_j$ is finite,
        \item $i \neq j$, $G_i$ is infinite, $G_j$ is finite. 
        
    \end{enumerate}  
    Let $G^\prime$ be the component that replaces $G_i$ and $G_j$ in $G(\B)$. So in $I(\A)$ the invariant $I^\prime$ of $G^\prime$ replaces $I_i$ and $I_j$.
    In cases 1 and 5, $m(\B)<m(\A)$. Cases 3 and 4 only involve finite components, so since $G^\prime$ will have fewer vertices than $G_i, G_j$, also $I^\prime$ must have fewer vertices than $I_i, I_j$. Therefore, $m(\B) = m(\A)$ and $I(\B) < I(\A)$. For case 2, notice that (as argued in the proof of Proposition \ref{uf_graph_computable}) either $I^\prime$ is obtained by merging (at least) two vertices, or there will be a cycle. If the former is true, $m(\B) = m(\A)$ and $I(\B) < I(\A)$. If the latter is true, $m(\B) < m(\A)$.
\end{proof}

\begin{proposition}\label{uf_emin}
$\iUF{1}{n} \equiv_c \emin(\omega \cdot n)$.
\end{proposition}
\begin{proof}
    ``$\geq_c$": To define a reduction $f$ from $\emin(\omega \cdot n)$ to $\iUF{1}{n}$, at each stage we compute the rank $\omega \cdot k + j$, with $k < n$, of the current least element of the $\emin(\omega \cdot n)$-instance and and put the equations
    \begin{itemize}
        \item $f^{j+1}(x_k) = f^{j}(x_k)$, and
        \item $f(x_i) = x_i$ for $i > k$
    \end{itemize}
    into the $\iUF{1}{n}$-instance.
    First, observe that if a new least element appears, the corresponding equations always already imply the equations that coded the previous least element. So the isomorphism type of the $\iUF{1}{n}$-instance is, at any stage, determined by the equations corresponding to the current least element in the $\emin(\omega \cdot n)$-instance.
    
    Second, observe that each component of the associated graph will always contain exactly one generator, so there will always be $n$ components. For a given $k$ and $j$, the components corresponding to the generators $x_0,...,x_{k-1}$ will be a ray. The component corresponding to $x_k$ consists of a path of length $j$ and a cycle of size $1$ at the end. The other components consist of just a cycle of size $1$. 
    
    Any two $\emin(\omega \cdot n)$-instances will, at a finite stage, stabilize to their actual least element. It is now easy to see that if the least elements are different, and thus have different rank, the corresponding $\iUF{1}{n}$-instances will be non-isomorphic (and vice versa: isomorphic if the least elements are the same). Thus, $f$ is a reduction.
    
    ``$\leq_c$": Again, we construct the reduction in stages. For a given $\iUF{1}{n}$-instance $W_e$, we construct a $\emin(\omega^2)$-instance $W_{f(e)}$. At any finite stage, if $\A$ is the algebra presented at this stage of $W_e$, compute $m(\A)$ and $I(\A)$, which characterize the isomorphism type. Then, put the element of rank $\omega \cdot m(\A) + I(\A)$ in $W_{f(e)}$. By Observation \ref{obs:uf_graph_decreasing}, $\omega \cdot m(\A) + I(\A)$ will be the least element of $W_{f(e)}$ at that stage. 
    
    For any $\iUF{1}{n}$-instance $W_e$, the invariant $(m(\A), I(\A))$ at each stage must stabilize as it is a descending sequence contained in $\omega^2$ by Observation \ref{obs:uf_graph_decreasing}. When $(m(\A), I(\A))$ stabilizes, the isomorphism type and $W_{f(e)}$ also stabilize. Thus, for any two $\iUF{1}{n}$-instances $W_e$ and $W_i$, by considering a late enough stage where both isomorphism types stabilize, we see that the $\iUF{1}{n}$-instances are isomorphic if and only if $\min(W_{f(e)}) = \min(W_{f(i)})$.  Computability of $f$ follows from Proposition \ref{uf_graph_computable}. Thus, $f$ is a computable reduction.
    \end{proof}

\begin{corollary}
	$\iUF{1}{n} <_c \iUF{1}{n+1}$.
\end{corollary}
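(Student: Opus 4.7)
The plan is to deduce this corollary immediately from the two preceding results in the section. By Proposition \ref{uf_emin}, we have the computable bi-reductions $\iUF{1}{n} \equiv_c \emin(\omega \cdot n)$ and $\iUF{1}{n+1} \equiv_c \emin(\omega \cdot (n+1))$. Since $\omega \cdot n < \omega \cdot (n+1)$ as computable ordinals, Proposition \ref{E_min_strictly_ascending} yields $\emin(\omega \cdot n) <_c \emin(\omega \cdot (n+1))$.

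I would then chain these three facts. For the $\leq_c$ direction, compose the reduction witnessing $\iUF{1}{n} \leq_c \emin(\omega \cdot n)$ with the reduction witnessing $\emin(\omega \cdot n) \leq_c \emin(\omega \cdot (n+1))$ and with the reduction witnessing $\emin(\omega \cdot (n+1)) \leq_c \iUF{1}{n+1}$. For strictness, if we had $\iUF{1}{n+1} \leq_c \iUF{1}{n}$, then composing with the bi-reductions from Proposition \ref{uf_emin} would give $\emin(\omega \cdot (n+1)) \leq_c \emin(\omega \cdot n)$, contradicting Proposition \ref{E_min_strictly_ascending}.

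There is essentially no obstacle here, since all the work has already been done in establishing Propositions \ref{uf_emin} and \ref{E_min_strictly_ascending}. The only minor point to mention is that $\equiv_c$ and $<_c$ compose in the expected way, which is immediate from the definition of $\leq_c$ via composition of computable functions.
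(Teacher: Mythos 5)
Your argument is correct and is exactly the paper's proof: the corollary is obtained by combining Proposition \ref{uf_emin} with Proposition \ref{E_min_strictly_ascending}, with the composition of reductions being routine. Nothing further is needed.
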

\begin{proof}
    Follows from Proposition \ref{uf_emin} together with Proposition \ref{E_min_strictly_ascending}.
\end{proof}

We also show that $\iUF{1}{\fg}$ is bounded by $\emin(\omega^2)$. The reverse reducibility fails here, because already for the empty set as $\emin(\omega^2)$-instance we have to commit to a certain number of generators in the $\iUF{1}{\fg}$-instance. In contrast to the $n$-generated case, in this case, there is no ``top element" from which any other algebra can be obtained through quotienting.

\begin{proposition}\label{uf_fg_emin}
    $\iUF{1}{\fg} <_c \emin(\omega^2)$.
\end{proposition}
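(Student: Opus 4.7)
The plan is to prove the reducibility $\iUF{1}{\fg}\le_c\emin(\omega^2)$ and the non-reducibility $\emin(\omega^2)\not\le_c\iUF{1}{\fg}$ separately. For the upper bound I would directly lift the $\le_c$-construction from Proposition \ref{uf_emin}. Given a $\UF{1}{\fg}$-presentation coded by $\langle k,e\rangle$, at each stage $s$ compute the invariant $(m(\A_s),I(\A_s))$ of the algebra $\A_s$ presented by the first $s$ equations of $W_e$ over $X_k$, using Proposition \ref{uf_graph_computable}, and enumerate the element of $\A_{\omega^2}$-rank $\omega\cdot m(\A_s)+I(\A_s)$ into the output c.e.\ set. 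Since $m(\A)\le k<\omega$ and $I(\A)<\omega$, this rank always lies in $\omega^2$; by Observation \ref{obs:uf_graph_decreasing} the invariant decreases lexicographically, so the output minimum stabilizes to $\omega\cdot m(\A)+I(\A)$ for the limit algebra $\A$, and two presentations are $\iUF{1}{\fg}$-equivalent iff their outputs are $\emin(\omega^2)$-equivalent.

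For the non-reducibility, the key observation is the strict inequality
\[
\omega\cdot m(\A)+I(\A)<\omega\cdot(k+1)
\]
for any algebra $\A\in\UF{1}{\fg}$ presented with $k$ generators, since $m(\A)\le k$ (each infinite component must contain a generator) and $I(\A)<\omega$. Suppose for contradiction that $f:\emin(\omega^2)\to\iUF{1}{\fg}$ is a computable reduction; write $f(e)=\langle k(e),e'(e)\rangle$ with $k$ computable, and set $\A_e:=\UF{1}{\fg}[f(e)]$. Composing $f$ with the reduction $g:\iUF{1}{\fg}\to\emin(\omega^2)$ just constructed, then applying Observation \ref{emin_red_well_def}, I obtain a reduction $h:\emin(\omega^2)\to\emin(\omega^2)$ that is well-defined on c.e.\ sets and satisfies that $\min_{\A_{\omega^2}}(W_{h(e)})$ has rank exactly $\omega\cdot m(\A_e)+I(\A_e)$.

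The final step is a diagonalization via the recursion theorem. Since $k$ is computable, the map sending $m$ to an index for the singleton $\{a\}$, where $a$ is the element of $\A_{\omega^2}$-rank $\omega\cdot(k(m)+1)$, is computable. Let $e^*$ be a fixed point, so that $W_{e^*}$ has least element of rank $\omega\cdot(k(e^*)+1)$. By Lemma \ref{f_preserves_rank} applied to $h$, the rank of $\min_{\A_{\omega^2}}(W_{h(e^*)})$ must be at least $\omega\cdot(k(e^*)+1)$, whereas the displayed inequality forces it to be strictly less than $\omega\cdot(k(e^*)+1)$---a contradiction. I expect the main obstacle to be spotting the rank inequality that ties the $m$-component of the algebraic invariant to the generator count, and then routing the argument through the well-defined modification of Observation \ref{emin_red_well_def} so that Lemma \ref{f_preserves_rank} applies cleanly to the composed map.
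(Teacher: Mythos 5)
Your proposal is correct. The $\leq_c$ half is exactly the paper's (it simply reuses the $\leq_c$ direction of Proposition \ref{uf_emin}, noting $m(\A)\le k$ so the ranks stay below $\omega^2$), but your non-reducibility half takes a genuinely different route. The paper diagonalizes directly: via the recursion theorem it builds two $\emin(\omega^2)$-instances and alternates between making them equivalent and inequivalent, each time waiting to see the images under the alleged reduction become isomorphic or non-isomorphic (decidable by Proposition \ref{uf_graph_computable}), with termination forced by the strictly decreasing invariant of Observation \ref{obs:uf_graph_decreasing}. You instead formalize the heuristic the paper only states informally before the proposition (``we have to commit to a certain number of generators''): the reduction computably fixes a generator count $k(e)$, every $k$-generated algebra has invariant $\omega\cdot m(\A)+I(\A)<\omega\cdot(k+1)$, and composing with the upper-bound reduction and the well-definedness repair of Observation \ref{emin_red_well_def} yields a self-reduction of $\emin(\omega^2)$ whose output minimum rank is trapped below $\omega\cdot(k(e)+1)$; a single recursion-theorem fixed point feeding in the singleton of rank $\omega\cdot(k(e^*)+1)$ then contradicts Lemma \ref{f_preserves_rank}. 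This buys a shorter argument that reuses the rank-preservation machinery already developed for $\emin$ on well-orderings (and localizes the combinatorial work there), while the paper's back-and-forth construction is more self-contained and does not need the composition trick; both ultimately rest on the decreasing invariant and (implicitly or explicitly) the recursion theorem. All the steps you need check out: $m(\A)\le k$ since every component contains a generator, the composed map is a reduction, the Observation \ref{emin_red_well_def} modification preserves least elements, and $k(\cdot)$ is total computable since the reduction is.
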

\begin{proof}
    ``$\leq_c$": The proof is identical to the $\leq_c$ direction of Proposition \ref{uf_emin}.
    
    ``$\not\geq_c$": Assume that there exists a computable reduction $f$ from $\emin(\omega^2)$ to $\iUF{1}{\fg}$. In stages we construct two instances $W_e$ and $W_i$ of $\emin(\omega^2)$ that cannot be correctly mapped to $\iUF{1}{\fg}$-instances by $f$. By the recursion theorem, we fix the indices $e,i$. 
    \begin{itemize}
        \item At stage $0$, both $W_e$ and $W_i$ are empty. 
        \item Then, alternate between the following two cases, starting with the first: 
        \begin{itemize}
            \item At this point, $W_e$ and $W_i$ are in the same $\emin(\omega^2)$-class. We wait until we see that the algebras presented by the images under $f$ of $W_e$ and $W_i$ are isomorphic. We can decide this by computing (Proposition \ref{uf_graph_computable}) and comparing their invariants. Once that happens, i.e.\ they have the same invariant $m, I$, we put the element of rank $\omega \cdot m + I$ into $W_e$. If that never happens, $f$ is not a reduction. Observation \ref{obs:uf_graph_decreasing} (together with our construction of previous stages: the least element of $W_i$, if it exists, must have come from the isomorphism type of a pre-quotient) ensures that now $W_e$ and $W_i$ are in different $\emin(\omega^2)$-classes.
        \item At this point, $W_e$ and $W_i$ are in different $\emin(\omega^2)$-classes. We wait until we see that the presentations are non-isomorphic (again, this is decidable by Proposition \ref{uf_graph_computable}). Once that happens, put the current least element of $W_e$ into $W_i$. If that never happens, $f$ is not a reduction.  
        \end{itemize}
            \end{itemize}
            
        We claim that we cannot alternate between the two cases infinitely often. Indeed, every time we switch cases, the invariant $(m, I)$ strictly decreases; but $(m, I)$ must stabilize as it is a strictly decreasing sequence contained in $\omega^2$ by Observation \ref{obs:uf_graph_decreasing}. Thus, we must eventually stabilize to one of the two cases. In either case, we obtain diagonalization, so $f$ cannot be a computable reduction.
\end{proof}

\subsection{\texorpdfstring{$\AG{n}$}{AGn}}\label{ag_section}

We further show that the isomorphism relations of $n$-generated abelian groups also fall in the degrees of $\emin(\omega \cdot n)$ and that the isomorphism relation of finitely generated abelian groups are below $\emin(\omega^2)$. This follows from very well-known results. 

\begin{proposition}\label{ag_emin}
    $\iAG{n} \equiv_c \emin(\omega \cdot n)$.
\end{proposition}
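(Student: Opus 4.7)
The plan is to follow the template of Proposition \ref{uf_emin} but replace the graph-theoretic invariant with the one coming from the fundamental theorem of finitely generated abelian groups. By that theorem, every $n$-generated abelian group is uniquely isomorphic to $\Z^r \oplus T$ with $r$ the free rank and $T$ a finite abelian group satisfying $r + \mu(T) \le n$, where $\mu(T)$ is the minimum number of generators of $T$; both $r$ and the isomorphism type of $T$ are computable from any finite presentation via Smith normal form over $\Z$.

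First I would fix a computable enumeration $(T_j)_{j \in \omega}$ of the isomorphism types of finite abelian groups ordered so that $|T_j|$ is non-decreasing in $j$, and set $\operatorname{inv}(\Z^r \oplus T_j) := \omega \cdot r + j$. The key monotonicity statement (the abelian analog of Observation \ref{obs:uf_graph_decreasing}) is that any non-trivial homomorphic image of $\Z^r \oplus T$ either drops $r$ strictly or keeps $r$ fixed while strictly shrinking $|T|$, so the invariant strictly decreases in the well-order. Because any non-trivial relation in $\Z^n$ yields a group of rank $< n$, the invariant of any non-free presentation lies in $\omega \cdot n$, with the free group $\Z^n$ corresponding to the empty case of $\emin(\omega \cdot n)$.

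For $\iAG{n} \le_c \emin(\omega \cdot n)$, I would compute at each stage $s$ the group $G_s$ presented by the current finite approximation $W_{e,s}$ and enumerate an element of rank $\operatorname{inv}(G_s)$ into $W_{f(e)}$, enumerating nothing while $G_s$ is still free. Monotonicity guarantees that $\operatorname{inv}(G_s)$ is weakly decreasing and stabilizes to $\operatorname{inv}(G)$, so $\min(W_{f(e)}) = \operatorname{inv}(G)$ in the limit. For $\emin(\omega \cdot n) \le_c \iAG{n}$, I would use the simpler one-parameter family $\Z^r \oplus \Z/p^j\Z$ for a fixed prime $p$ (with $\Z/p^0\Z := 0$), witnessed by the relations $\{p^j x_r = 0\} \cup \{x_i = 0 : r < i < n\}$. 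At each stage, if the least element of $W_{e,s}$ has rank $\omega \cdot r + j$, I enumerate these relations into $W_{f(e)}$; if $W_e$ is still empty, nothing is enumerated, giving the free group $\Z^n$.

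The crucial check in the reverse direction is that $\Z^r \oplus \Z/p^j\Z \twoheadrightarrow \Z^{r'} \oplus \Z/p^{j'}\Z$ whenever $(r', j') <_{\mathrm{lex}} (r, j)$: when $r' = r$ and $j' < j$ this uses $p^{j'} \mid p^j$, and when $r' < r$ the new relation $x_{r'+1} = 0$ (together with $r' + 1 \le r$) kills the old torsion generator outright, so the freshly added relations consistently quotient the previously presented group to the new target. The main obstacle I expect is precisely this verification: one must check that the accumulating relations over stages present exactly the group associated to the true minimum of $W_e$ and not an unintended further quotient, which amounts to carefully handling the case-splits on whether the rank $r$ drops or only the torsion exponent $j$ does, and confirming that each earlier-added relation is derivable from the relations added at the true-minimum stage.
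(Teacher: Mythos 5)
Your proposal is correct and takes essentially the same approach as the paper: the $\leq_c$ direction computes the fundamental-theorem invariants via Smith normal form at each stage and enumerates a weakly decreasing sequence of codes, and the $\geq_c$ direction encodes rank $\omega\cdot r+j$ by $\Z^r$ plus a cyclic group from a one-parameter family closed under the needed quotients. The only differences are cosmetic: you index torsion by an order-sorted enumeration of finite abelian groups and use $\Z/p^j\Z$, where the paper uses the invariant-factor tuple $\langle k_1,\dots,k_m\rangle$ and $\Z/k!\,\Z$.
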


\begin{proof}
    ``$\leq_c$": By the fundamental theorem of finitely generated abelian groups, every algebra in $\AG{n}$ has a unique decomposition into $\mathbb{Z}^i \oplus \mathbb{Z}/k_1 \mathbb{Z} \oplus ... \oplus \mathbb{Z}/k_m \mathbb{Z}$, where $k_i$ divides $k_{i+1}$ (thus also the order is unique). The numbers $i,k_1,...,k_m$ can be computed from a finite presentation, e.g.\ by computing the Smith normal form (see e.g.\ \cite{newman1997smith}) from the equations in the presentation. 

    So we can construct our reduction by computing, at each finite stage, the numbers $i,k_1,...,k_m$ from the finite presentation and add the element $\omega \cdot i + \langle k_1,...,k_m\rangle$ to the $\emin(\omega \cdot n)$ instance. Note that as long as $i$ does not change, each of $k_1,...,k_m$ can only decrease, so by choosing a suitable tuple encoding we can make sure that the element that is last added is the least element of the $\emin(\omega \cdot n)$ instance. By uniqueness of the decomposition, it follows immediately that this reduction is correct (i.e.\ it preserves equivalence and non-equivalence).

    ``$\geq_c$": We can encode the element of rank $\omega \cdot i + k$, with $i < n$, by the abelian group $\mathbb{Z}^i \oplus \mathbb{Z}/k!$. It is easy to check that these are c.e.\ presentable. Also, if the rank decreases, we can obtain the abelian group that encodes it by quotienting. 
\end{proof}

Analogous to Proposition \ref{uf_fg_emin}, the ``$\leq_c$"-direction in the proof of Proposition \ref{ag_emin} also works for finitely many generators, and the other direction can be diagonalized against (in this case using the invariant from Proposition \ref{ag_emin}). So we obtain the following.
\begin{proposition}\label{ag_fg_emin}
   $\iAG{\fg} <_c \emin(\omega^2)$
\end{proposition}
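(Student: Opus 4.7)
The approach mirrors Proposition \ref{uf_fg_emin}: the upper bound is obtained by the same construction used in Proposition \ref{ag_emin}, which needs no reference to a fixed number of generators, and the strict non-reducibility is established by a stage-by-stage diagonalization that exploits the fact that the Smith-normal-form invariant of a finitely presented abelian group is computable.

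For the ``$\leq_c$'' direction, I would define the reduction $f$ in stages. Given a c.e.\ presentation $\AG{\fg}[\langle k,e\rangle] = \langle X_k \mid W_e\rangle_{\AG{}}$, at each stage $s$ compute the Smith normal form of the finitely presented group $\langle X_k \mid W_{e,s}\rangle_{\AG{}}$ to obtain a decomposition $\mathbb{Z}^{i_s} \oplus \mathbb{Z}/k^s_1\mathbb{Z} \oplus \cdots \oplus \mathbb{Z}/k^s_{m_s}\mathbb{Z}$ with $k^s_j \mid k^s_{j+1}$, and enumerate the element of $\A_{\omega^2}$-rank $\omega \cdot i_s + \langle k^s_1,\ldots,k^s_{m_s}\rangle$ into $W_{f(\langle k,e\rangle),s}$. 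Since adding equations corresponds to passing to a further quotient, the free rank $i_s$ is non-increasing, and once $i_s$ stabilizes the tuple $(k^s_1,\ldots,k^s_{m_s})$ evolves only in ways that strictly decrease a suitably chosen tuple encoding (this is just the statement that a proper quotient of a fixed-rank finitely generated abelian group has a strictly smaller torsion invariant under the lexicographic order on the elementary divisors, after a choice of encoding compatible with divisibility). Thus the invariant we enumerate is a descending sequence in $\omega^2$, must stabilize, and the final value is a complete isomorphism invariant. So $\langle k,e\rangle$ and $\langle k',e'\rangle$ determine isomorphic groups if and only if the corresponding $\emin(\omega^2)$-instances share a least element.

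For the ``$\not\geq_c$'' direction, I would argue by diagonalization exactly as in Proposition \ref{uf_fg_emin}. Assume for contradiction that $f$ is a computable reduction from $\emin(\omega^2)$ to $\iAG{\fg}$. Using the recursion theorem, fix indices $e,i$ and construct $W_e$ and $W_i$ in stages, starting from $W_e = W_i = \emptyset$ (so they agree in the $\emin(\omega^2)$-class). Then alternate between two phases: when $W_e$ and $W_i$ are currently in the same $\emin(\omega^2)$-class, wait to see the finite presentations $\AG{\fg}[f(e)_s]$ and $\AG{\fg}[f(i)_s]$ become isomorphic (decidable via Smith normal form on the equations enumerated so far), then insert the element coding the current invariant into $W_e$ to break equivalence; when they are in different classes, wait to see the two presentations become non-isomorphic, then copy the current $\emin(\omega^2)$-least element of $W_e$ into $W_i$ to restore equivalence. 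If either wait never terminates, $f$ fails to be a reduction.

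The main obstacle, and the reason the argument terminates in a contradiction, is precisely the monotonicity observation from the upper-bound direction: each time we switch phases, the enumerated invariant $\omega \cdot i + \langle k_1,\ldots,k_m\rangle$ of $\AG{\fg}[f(e)_s]$ (and of $\AG{\fg}[f(i)_s]$) strictly drops in $\A_{\omega^2}$, because at each switch at least one of $f(e)$ or $f(i)$ has received a new equation after its presentation had stabilized to some invariant, forcing a proper quotient on the algebra side. Since $\omega^2$ is well-founded, the phase-switching must halt, leaving $f$ stuck in one of the two cases and thereby diagonalized against. This gives the strict separation $\iAG{\fg} <_c \emin(\omega^2)$.
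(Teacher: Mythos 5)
Your proposal is correct and is essentially the paper's own argument: the paper likewise obtains the upper bound by noting that the $\leq_c$ construction of Proposition \ref{ag_emin} (the stagewise Smith-normal-form invariant $\omega\cdot i+\langle k_1,\ldots,k_m\rangle$) is independent of the number of generators, and obtains strictness by the diagonalization of Proposition \ref{uf_fg_emin} run with this invariant, whose strict decrease under proper quotients bounds the number of phase switches. No substantive differences to flag.
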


However, note that since for both $\UF{1}{\fg}$ and $\AG{\fg}$ we have these invariants corresponding to ordinals below $\omega^2$, we can at least show that they are equivalent.
\begin{proposition}\label{ag_equiv_uf}
    $\iAG{\fg} \equiv_c \iUF{1}{\fg}$
\end{proposition}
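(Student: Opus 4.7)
The plan is to build computable reductions in both directions by leveraging the complete isomorphism invariants introduced in the proofs of Propositions \ref{uf_fg_emin} and \ref{ag_fg_emin}. For an algebra in $\AG{\fg}$, the invariant is the tuple $(i, k_1,\ldots,k_m)$ arising from its decomposition $\mathbb{Z}^i \oplus \mathbb{Z}/k_1 \oplus \cdots \oplus \mathbb{Z}/k_m$ with $k_1 \mid \cdots \mid k_m$; for an algebra in $\UF{1}{\fg}$, the invariant is the pair $(m, I)$ where $m$ counts the infinite components and $I$ codes the finite graph formed by the finite components together with the characterizing finite subgraphs of the infinite ones. Both invariants are computable from any finite presentation (via Proposition \ref{uf_graph_computable} and the Smith normal form algorithm, respectively), both live in $\omega^2$, and both are strictly decreasing under the addition of non-trivial equations.

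For $\iAG{\fg} \leq_c \iUF{1}{\fg}$, the approach is to encode the AG invariant $(i, k_1,\ldots,k_m)$ as the isomorphism type of the $\UF{1}{\fg}$ algebra whose graph has $i$ disjoint rays and $m$ disjoint cycles of lengths $k_1,\ldots,k_m$. This encoding is injective on isomorphism classes, since $i$ and the multiset $\{k_1,\ldots,k_m\}$ are graph invariants, and the divisibility condition $k_1 \mid \cdots \mid k_m$ then recovers the ordered tuple from the multiset. The construction proceeds in stages on $k$ generators (where $k$ is the number of AG generators; this is sufficient because $i + m \leq k$ always): at each stage I compute the current AG invariant and add suitable $\UF{1}{\fg}$ equations of the form $f^n(x_j) = x_j$ (to build or shorten a cycle) and $x_j = x_{j'}$ (to merge two components) so that the isomorphism type of the presentation at the current stage matches the encoding of the current AG invariant. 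For the converse $\iUF{1}{\fg} \leq_c \iAG{\fg}$, I would symmetrically encode $(m, I)$ as an AG invariant, for example by mapping $(m, I)$ to $\mathbb{Z}^m \oplus \mathbb{Z}/p^{I+1}$ for a fixed prime $p$, and build the AG presentation by analogous stagewise equation additions (killing free generators to lower $m$, raising the torsion exponent by dividing to the appropriate power of $p$).

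The main obstacle lies in showing that every transition of the source invariant (forced by enumeration of new equations) can be matched on the target side by adding equations only, respecting the c.e.\ monotonicity constraint; equivalently, the encoding must send the quotient order on source invariants into the quotient order on target algebras. I expect to handle this by a case analysis on the ways the source invariant can change---for instance, on the AG side, the free rank $i$ can drop (via a generator becoming torsion or being killed), the torsion tuple can lose or gain a factor, or its entries can shift under divisibility---and exhibit for each case a sequence of equation additions in the target presentation that achieves the corresponding transition while keeping the equations already committed to intact. This case analysis is the technical heart of the proof; once it is in place, correctness of the reduction follows because the invariant strictly decreases in $\omega^2$ at every non-trivial step, so both source and target presentations eventually stabilize to isomorphism types corresponding under the chosen encoding.
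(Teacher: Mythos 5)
Your overall strategy (compose the two stagewise-computable, $\omega^2$-valued invariants) is in the same spirit as the paper, and your second direction is essentially sound: encoding $(m,I)$ by $\mathbb{Z}^m\oplus\mathbb{Z}/p^{I+1}$ is monotone under the lexicographic decrease of $(m,I)$, just as the paper's $\mathbb{Z}^i\oplus\mathbb{Z}/k!$ encoding is. But the first direction has a genuine gap, and it sits exactly at the step you defer to an unwritten case analysis. The issue is not merely that the matching of transitions is unproven: with your chosen encoding (disjoint rays plus cycles of lengths $k_1,\dots,k_m$) it is false. Take the source transition $\mathbb{Z}\oplus\mathbb{Z}/2 \twoheadrightarrow \mathbb{Z}/4$, which really occurs for a c.e.\ presentation on two generators $a,b$: first enumerate $2b=0$, later enumerate $b=2a$ and $4a=0$. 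Your encoding of the first invariant is a ray together with a $2$-cycle, and of the second a single $4$-cycle. Any homomorphism must send the $2$-cycle to a cycle of length dividing $2$, and a $4$-cycle contains no such cycle; hence the encoding of the new invariant is not a quotient of the encoding of the old one, so no amount of added equations can realize the transition, and the reduction as described breaks. (The same phenomenon blocks ``shortening'' a cycle to a non-divisor length.) The point is that when the free rank drops, the torsion of the quotient can grow in ways unrelated to the old torsion, so an encoding that represents the invariant factors by cycles of those exact lengths cannot follow the quotient order.

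The paper avoids this by not encoding the tuple structurally at all: it first collapses the $\AG{}$ invariant to an ordinal $\omega\cdot i + \langle k_1,\dots,k_m\rangle$ (with a pairing chosen so that the ordinal strictly decreases whenever the group is properly quotiented, as in Proposition \ref{ag_emin}), and then realizes that ordinal on the $\UF{1}{}$ side using the encoding from the ``$\geq_c$'' direction of Proposition \ref{uf_emin}: rays for the $\omega$-part and a single path of the appropriate length ending in a $1$-loop, with the other generators sent to $1$-loops. That encoding has the key property that the equations coding a smaller ordinal imply the equations coding any larger one, so every decrease of the source invariant is automatically a legal (and computable) extension of the target presentation. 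To repair your proof you would need to replace your cycle-based encoding by one with this ordinal-monotonicity property; once you do, you have essentially reconstructed the paper's argument.
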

\begin{proof}
    ``$\leq_c$": At each stage, compute the invariant numbers $i, k_1, ..., k_m$ of the $\iAG{\fg}$ instance as described in the ``$\leq_c$" direction of Proposition \ref{ag_emin}. Construct the $\iUF{1}{\fg}$ instance as done in the ``$\geq_c$" direction of Proposition \ref{uf_emin}.

    ``$\geq_c$": At each stage, compute the invariant numbers $m, I, F$ of the $\iUF{1}{\fg}$ instance as described in the ``$\leq_c$" direction of Proposition \ref{uf_emin}. Construct the $\iAG{\fg}$ instance as done in the ``$\geq_c$" direction of Proposition \ref{ag_emin}.
\end{proof}

\section{Below \texorpdfstring{$\eqce$}{=ce} and ACC}\label{sec:below_eqce}
In this section, we study further equivalence relations below $\eqce$. Here, we focus on the consequences of the ascending chain condition (ACC) in congruence lattices, a structural property of certain classes of algebras. This allows us to also derive general results, as a step towards a systematic picture of equivalence relations below $\eqce$. 

But before we explore the connection to ACC, we start this section by giving a lower bound to isomorphism relations on c.e.\ structures in a variety. 

\begin{proposition}\label{least_isomorphism_relation}
    Under computable reducibility, there is a least non-trivial (i.e.\ more than one class) isomorphism relation, namely, the isomorphism relation on $2$-generated sets without function symbols. 
\end{proposition}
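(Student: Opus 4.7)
My approach first analyzes the class of $2$-generated sets in detail, then constructs a reduction to any non-trivial isomorphism relation. In the variety of sets—no function symbols, no equational laws—the only terms over $X_2 = \{x_0, x_1\}$ are the generators themselves, so the congruence lattice has exactly two elements: the trivial congruence, yielding the free $2$-element set, and the one identifying $x_0$ with $x_1$, yielding the $1$-element set. Writing $c_0, c_1$ for the codes of the non-trivial pairs $(x_0, x_1)$ and $(x_1, x_0)$, the algebra $\V{2}[e]$ is the $1$-element set exactly when $W_e$ contains $c_0$ or $c_1$. Hence $\iV{2}$ has precisely two classes—the c.e.\ set $A = \{e \mid c_0 \in W_e \text{ or } c_1 \in W_e\}$ and its complement—and in particular is non-trivial.

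Next, I would take an arbitrary non-trivial isomorphism relation $F$ (one of $\iV{n}$, $\iV{}$, or $\iV{\fg}$ for some variety $\mathcal V$, possibly different from the variety of sets) and build a computable reduction $\iV{2} \leq_c F$. The key step is to locate c.e.\ presentations $\langle X, W_i\rangle_{\mathcal V}$ of an algebra $\mathcal A$ and $\langle X, W_j\rangle_{\mathcal V}$ of a non-isomorphic algebra $\mathcal B$, on a common generating set $X$, with $W_i \subseteq W_j$. For $\iV{n}$ and $\iV{}$, I would take $\mathcal A$ to be the free algebra (presented by $W_i = \emptyset$); since the free algebra surjects onto every algebra in the class, non-triviality of $F$ produces some $\mathcal B$ in a different $F$-class, and any c.e.\ presentation of $\mathcal B$ serves as $W_j$. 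For $\iV{\fg}$, I pick representatives $\mathcal C, \mathcal D$ of two distinct classes with generator counts $k_{\mathcal C}, k_{\mathcal D}$, and take $\mathcal A$ to be the free algebra on $k := \max(k_{\mathcal C}, k_{\mathcal D})$ generators; after padding the originally shorter presentation of $\mathcal C$ or $\mathcal D$ via trivial identifications such as $(x_m, x_0)$, the free algebra surjects onto both. Since $\mathcal A$ lies in only one $F$-class, at least one of $\mathcal C, \mathcal D$ can serve as $\mathcal B$, and the padded c.e.\ presentation of $\mathcal B$ serves as $W_j$, extending $W_i = \emptyset$.

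Finally, I would define the reduction $f$ as follows: on input $e$, the c.e.\ set $W_{f(e)}$ always enumerates $W_i$, and begins enumerating every element of $W_j$ the moment $c_0$ or $c_1$ appears in $W_e$. If neither $c_0$ nor $c_1$ ever enters $W_e$, then $W_{f(e)} = W_i$ and $f(e)$ presents $\mathcal A$; otherwise $W_{f(e)} = W_i \cup W_j = W_j$ (since $W_i \subseteq W_j$), so $f(e)$ presents $\mathcal B$. Because $\mathcal A \not\cong \mathcal B$, this gives a valid reduction $\iV{2} \leq_c F$. The main subtlety is the $\iV{\fg}$ case, where the two chosen representatives may come with different numbers of generators; this is handled by the padding step described above (or equivalently by Lemma \ref{effective_quotient_superset}), which provides the needed c.e.\ presentations on a common generator set.
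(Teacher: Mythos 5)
Your proposal is correct and follows essentially the same route as the paper: analyze the two-class structure of the $2$-generated sets, then reduce by watching for the collapse $x_0=x_1$ and switching from a presentation of one fixed algebra to a nested presentation of a non-isomorphic one. The paper streamlines the second half by always using the pair $\emptyset$ (free algebra) and $\nat$ (full congruence, i.e.\ the trivial algebra), noting that non-triviality of the target relation forces these to be non-isomorphic, which avoids your non-uniform choice of $\mathcal B$ and the padding discussion in the finitely generated case.
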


\begin{proof}
    In the absence of function symbols (including constants), an algebra is a set without structure, and the term algebra only contains the generators. Call this variety $\mathcal S$. If we let $x_0, x_1$ be the two generators, $\mathcal S_2$ contains exactly two algebras (up to isomorphism), $\{x_0\}$ and $\{x_0,x_1\}$.
    We argue that the corresponding isomorphism relation is computably reducible to any other non-trivial isomorphism relation, say $\C \in \{\V{n}, \V{\fg}\}$ where $V{}$ is a variety. 
    Given a $\mathcal S_2$-instance $e$, as long as $x_0 = x_1$ has not entered $W_e$, we define $W_{f(e)}$ to be the empty set (the trivial congruence relation). As soon as $x_0=x_1$ entered $W_e$, we define $W_{f(e)}$ to be $\nat$ (the full congruence relation), namely, start enumerating every $n\in \nat$ into $W_{f(e)}$. It is immediate that in a non-trivial isomorphism relation, $\emptyset$ and $\nat$ must be in different classes.
\end{proof}

Note that this equivalence relation is not c.e. Therefore, since isomorphism relations on finite presentations are at most c.e.\ (Observation \ref{fp_sigma_1}), they can never be computably equivalent to any isomorphism relation on c.e.\ algebras, except in the trivial degree.

\subsection{ACC and homomorphic openness}

Here, we give a formal definition of the ACC on congruence lattices. The \emph{congruence lattice} of an algebra is the lattice of all congruence relations of the algebra, ordered by inclusion.

\begin{definition}
	A partially ordered set $(P,\leq)$ (i.e. $\leq$ is a reflexive, antisymmetric, and transitive binary relation on $P$) satisfies the \emph{ascending chain condition (ACC)} if for every ascending sequence
	\[
	p_0 \leq p_1 \leq p_2 \leq ...
	\]
	of elements $p_i \in P$, there exists a $n \in \nat$ s.t.
	\[
	p_n = p_{n+1} = p_{n+2} = ...
	\]

    We say an algebra has \emph{ACC} if its congruence lattice has ACC. We say a class of algebras has \emph{ACC} if all algebras in the class have ACC. 
\end{definition}

Note that for a variety $\V{}$, the class $\V{n}$ has ACC iff $\V{}[\emptyset]$ (the free algebra on $n$ generators) has ACC, and $\V{\fg}$ has ACC iff for every $n\geq 1$, $\V{\fg}[n,\emptyset]$ has ACC. Furthermore, if $\V{\fg}$ has ACC, so does $\V{n}$ for every $n \geq 1$.

Also, we immediately get the following very simple but important lemma.
\begin{lemma}\label{acc_isom_type}
		Let $\V{}$ be a variety and let $\C \in \{\V{n}, \V{\fg}\}$ have ACC. Then, for each $e$ there exists a stage $s$ of the enumeration of the equations at which the isomorphism type stabilizes, i.e. 
        \[
        \C[W_{e,s}] \cong \C[W_{e,s+1}] \cong \C[W_{e,s+2}] \cong ...
        \]
	\end{lemma}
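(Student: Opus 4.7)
The plan is to translate the statement about isomorphism types stabilizing into a statement about congruence relations stabilizing, and then invoke ACC on an appropriate free algebra. Fix $e$, and write $X$ for $X_n$ (in the case $\C = \V{n}$) or for $X_k$ where $\langle k, e'\rangle$ is the coding of the presentation (in the case $\C = \V{\fg}$, with the obvious renaming of the index). For each stage $s$, let $\approx_s$ denote the congruence on $\T(X)$ generated by $W_{e,s}$ together with the equational laws of $\V{}$. Since $W_{e,s} \subseteq W_{e,s+1}$, we have an ascending chain of congruences
\[
\approx_0 \;\subseteq\; \approx_1 \;\subseteq\; \approx_2 \;\subseteq\; \cdots
\]
on $\T(X)$.

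The key step is to view this chain inside the congruence lattice of a single algebra that lies in $\C$. Let $F$ denote the free algebra of $\V{}$ on $X$, that is, $F = \T(X)/\approx_\emptyset$ where $\approx_\emptyset$ is generated solely by the equational laws of $\V{}$. Note that $F \in \C$: in the case $\C = \V{n}$ this is $\V{n}[e_\emptyset]$, and in the case $\C = \V{\fg}$ this is $\V{\fg}[\langle k, e_\emptyset\rangle]$. By hypothesis, $F$ has ACC on its congruence lattice. For each $s$, the third isomorphism theorem (Theorem \ref{third_isom_theorem}) produces a congruence
\[
D_s := \{ ([a]_{\approx_\emptyset},[b]_{\approx_\emptyset}) : a \approx_s b \}
\]
on $F$, and $D_s \subseteq D_{s+1}$ since $\approx_s \subseteq \approx_{s+1}$. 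Applying ACC to the chain $D_0 \subseteq D_1 \subseteq \cdots$ yields a stage $s$ with $D_s = D_{s+1} = D_{s+2} = \cdots$, which is equivalent to $\approx_s = \approx_{s+1} = \approx_{s+2} = \cdots$

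From this it follows that $\C[W_{e,s}] = \T(X)/\approx_s$ is literally the same algebra (not merely isomorphic) as $\C[W_{e,t}]$ for every $t \geq s$, so the isomorphism type certainly stabilizes. The main obstacle I anticipate is simply checking that the free algebra on the appropriate generating set really belongs to $\C$ and that ACC applies; for $\V{n}$ this is immediate, while for $\V{\fg}$ one must observe that the presentation fixes a value of $k$ at the outset, so $F$ is the free $k$-generated algebra in $\V{}$, which is finitely generated and therefore in $\V{\fg}$. No genuine diagonalization or combinatorial work is required: the content of the lemma is exactly that ACC on congruence lattices converts a computable enumeration of equations into a computable ``stabilization stage" statement at the level of isomorphism types.
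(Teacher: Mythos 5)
Your proof is correct and is essentially the paper's argument spelled out in detail: the paper's one-line proof (infinitely many isomorphism-type changes would force infinitely many congruence changes, contradicting ACC of the free algebra in $\C$) is exactly your observation that the ascending chain of congruences generated by $W_{e,s}$ must stabilize, after which the presented algebra is literally unchanged. One small caution: your closing remark should not suggest the stabilization stage is \emph{computable} --- ACC only yields its existence, which is all the lemma claims.
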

	\begin{proof}
		Infinitely many changes of the isomorphism type would require infinitely many changes of the congruence relation, contradicting ACC. 
	\end{proof}

\begin{definition}
    A class of algebras is \emph{homomorphically open} if it contains two algebras $\A,\B$, s.t.\ $\A \twoheadrightarrow \B$, $\B \twoheadrightarrow \A$, but $\A \not\cong \B$.
\end{definition}
So, being homomorphically open means containing two algebras that are homomorphic images (i.e.\ quotients) of each other, but are not isomorphic. Note that this property is related to non-Hopfianity --- we say an algebra $\A$ is \emph{Hopfian} if it is not isomorphic to any of its proper quotients. Below, we collect some important observations that connect the properties that we mentioned.

\begin{observation} [Folklore]\label{acc_hom_open_facts}
Let $\V{}$ be a variety and let $\C \in \{\V{n}, \V{\fg}\}$. Then,
        \begin{enumerate}
    		\item $\C$ has ACC $\iff$ every algebra in $\C$ has a finite presentation,
	   	\item $\C$ has ACC $\implies$ every algebra in $\C$ is Hopfian,
		  \item every algebra in $\C$ is Hopfian $\implies$ $\C$ is not homomorphically open.
        \end{enumerate}
\end{observation}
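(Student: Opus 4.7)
The plan is to treat the three implications separately, since each rests on a short, essentially classical argument.

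For (1)$\implies$, Lemma~\ref{acc_isom_type} does almost all the work: given a c.e.\ presentation $\langle X, W_e\rangle_{\V{}}$ of $\A \in \C$, the congruences $\approx_{W_{e,s}}$ form an ascending chain inside the congruence lattice of the free algebra on $X$ in $\V{}$ (which itself lies in $\C$ and therefore has ACC), so the chain must stabilize at some stage $s_0$, yielding the finite presentation $\langle X, W_{e,s_0}\rangle_{\V{}}$. For (1)$\impliedby$, I would argue contrapositively: if some $\A \in \C$ admits a strictly ascending chain $C_0 \subsetneq C_1 \subsetneq \cdots$ of congruences, I set $C := \bigcup_i C_i$ and consider $\B := \A/C$, which lies in $\C$ by closure under homomorphic images. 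Comparing a finite presentation of $\A$ (supplied by the hypothesis) with a finite presentation of $\B$ produces a finite set generating $C$ as a congruence on $\A$, so some $C_j$ already contains these generators and hence equals $C$, contradicting strictness of the chain.

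For (2), I would use the standard iterated-kernel argument. Given a surjective endomorphism $\varphi: \A \twoheadrightarrow \A$, the kernels $\ker(\varphi^k)$ form an ascending chain of congruences on $\A$, and by ACC they stabilize at some $k_0$ with $\ker(\varphi^{k_0}) = \ker(\varphi^{k_0+1})$. If $\varphi(a) = \varphi(b)$, surjectivity of $\varphi^{k_0}$ produces $c,d$ with $\varphi^{k_0}(c) = a$ and $\varphi^{k_0}(d) = b$; then $\varphi^{k_0+1}(c) = \varphi^{k_0+1}(d)$, so $(c,d) \in \ker(\varphi^{k_0})$, i.e.\ $a = b$. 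Thus $\varphi$ is injective, hence an automorphism, and $\A$ is Hopfian. For (3), I would argue contrapositively: if $\C$ is homomorphically open, pick $\A, \B \in \C$ with surjections $\varphi: \A \twoheadrightarrow \B$ and $\psi: \B \twoheadrightarrow \A$ but $\A \not\cong \B$. Then $\psi \circ \varphi: \A \twoheadrightarrow \A$ is a surjective endomorphism, which by Hopfianity of $\A$ must be an automorphism; in particular $\varphi$ is injective and hence an isomorphism, contradicting $\A \not\cong \B$.

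The main obstacle is (1)$\impliedby$: translating ``$\B$ admits a finite presentation'' into ``the congruence $C$ is finitely generated on $\A$'' requires leveraging both that $\A$ itself is finitely presented and the standard fact that two finite presentations of the same finitely generated algebra can be matched up via finitely many equations. Parts (2) and (3), by contrast, are routine once one has the iterated-kernel trick and the first isomorphism theorem.
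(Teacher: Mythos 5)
Your proof is correct and essentially coincides with the paper's: part (1) is the same extract-finitely-many-relations / union-of-a-chain use of ACC for the free algebra on the given generators (the paper pulls out one not-yet-implied equation at a time instead of using enumeration stages), and parts (2) and (3) are exactly the paper's iterated-surjection and composition-of-surjections arguments, with your kernel-stabilization computation being the explicit form of what the paper obtains by citing Lemma \ref{quotient_superset} and non-Hopfianity. The one soft spot in the converse of (1) --- why $\A/C$, for $C$ the union of the chain, is again a member of $\C$ (i.e.\ has a c.e.\ presentation) so that the finite-presentability hypothesis applies to it --- is glossed over by the paper in precisely the same way, so it is not a gap of your write-up relative to the paper's own proof.
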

\begin{proof}
    \begin{enumerate}
        \item For the $\implies$-direction, assume ACC and take any algebra $\A \in \C$ and fix a presentation of $\A$. Let $R$ be the set of equations in this presentation. Define
        \begin{itemize}
            \item $R_0 := \emptyset$
            \item If there is an equation from $R$ that is not yet implied by $R_i$, define 
            $R_{i+1} := R_i \cup \{s \approx t\}$. Otherwise, end the construction.
        \end{itemize}
        If the construction never stops, the congruences generated by the $R_i, i \geq 0$ would strictly increase and contradict ACC. Thus, there is some finite set $R_i$ of equations that already implies all equations in $R$. This proves the existence of a finite presentation for $\A$.
        
        For the $\impliedby$-direction, take any infinite chain of congruences and observe that its union must have a finite presentation. But then there is a congruence in this chain that contains this finite presentation, so the chain stabilizes.
        \item Assume that there exists a non-Hopfian algebra $\A$, i.e.\ there is a non-injective homomorphic mapping witnessing $\A \twoheadrightarrow \A$. Iterating this homomorphic mapping, we obtain a sequence
        $$\A \twoheadrightarrow \A \twoheadrightarrow \A \twoheadrightarrow \A \twoheadrightarrow \dots$$
        where every surjection is strict. By Lemma \ref{quotient_superset}, this gives an infinitely ascending chain of congruences, a contradiction to ACC.
        \item Let $\A,\B \in \C$ witness $\C$ being homomorphically open. Then $\A$ is a non-trivial homomorphic image of itself via $\A \twoheadrightarrow \B \twoheadrightarrow \A$, therefore non-Hopfian.
    \end{enumerate}
\end{proof}

In particular, if $\A$ is an algebra in a variety $\C$ that has ACC, then any strict quotient of $\A$ is non-isomorphic to $\A$. We will use this fact without explicitly mentioning it in the later sections.

\subsection{Examples of classes with ACC}\label{examples_acc}
We now look at concrete classes that have ACC. As first examples, we consider the varieties $\mathcal{CS}$ and $\mathcal{CM}$ (commutative semigroups and commutative monoids). In these varieties, every finitely generated algebra has a finite presentation \cite{redei2014theory} and Observation \ref{acc_hom_open_facts} shows that this is equivalent to having ACC. From the proofs of Proposition \ref{uf_emin} and Proposition \ref{ag_emin}, it is easy to extract that the classes of finitely generated algebras of $\UF{1}{}$ and $\AG{}$ also have ACC.

So we have $\V{n},\V{\fg}$, for $\V{} \in \{\UF{1}{},\AG{},\CS{}, \CM{}\}$, as examples of classes with ACC.
Let us look at how these compare to each other. We will show that already $\iCS{2}$ (and thus also $\iCM{2}$) is above $\emin(\omega^2)$ and thus above any $\iUF{1}{n}$ and $\iAG{n}$.

\begin{proposition}\label{emin_below_cs2}
	$\emin(\omega^2) \leq_c \iCS{2}$.
\end{proposition}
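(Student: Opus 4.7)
The plan is to build a computable reduction $f : \nat \to \nat$ by assigning each c.e.\ set $W_e$ a $2$-generated commutative semigroup whose isomorphism type encodes the $\omega^2$-rank of $\min(W_e)$. Concretely, I work with the family
\[
S_{k,j} := \langle a, b \mid a^{k+2} = a^{k+1},\ a^{k+1}b = a^{k+1},\ a^k b^{j+2} = a^k b^{j+1}\rangle_{\mathcal{CS}}, \qquad (k, j \in \omega).
\]
The first two relations turn $a^{k+1}$ into an absorbing element that collapses everything in rows $\geq k+1$, and the third caps the ``row $k$'' $\{a^k b^\ell : \ell \geq 0\}$ at exactly $j+2$ elements. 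Rows $i$ with $0 \leq i < k$ remain infinite chains under right-multiplication by $b$, so $S_{k,j}$ is infinite whenever $k \geq 1$ and is finite of size $j+2$ when $k = 0$.

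The reduction $f$ proceeds stagewise. At stage $s$, if $W_{e,s}$ is nonempty, let $m_s := \min W_{e,s}$, decode it under the canonical coding of $\omega^2$ as $m_s \leftrightarrow \omega \cdot k_s + j_s$, and enumerate the three defining relations of $S_{k_s, j_s}$ into $W_{f(e)}$. If $W_e$ is empty at every stage, $W_{f(e)}$ stays empty and codes the free $2$-generated commutative semigroup, which is non-isomorphic to every $S_{k,j}$ since it has no absorbing element.

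The correctness of $f$ depends on two structural properties. First, $S_{k, j} \twoheadrightarrow S_{k', j'}$ whenever $\omega \cdot k' + j' \leq \omega \cdot k + j$: if $k' < k$ then in $S_{k', j'}$ the element $a^{k+1}$ is already absorbing (since $k+1 > k'+1$), and all three defining relations of $S_{k, j}$ follow trivially from that absorbing behavior; if $k' = k$ and $j' \leq j$ then the third relation holds because capping at $j'+1$ is finer than capping at $j+1$, while the first two are unchanged. Consequently, as $\alpha_s := \omega \cdot k_s + j_s$ is a nonincreasing sequence in $\omega^2$ it eventually stabilizes, and by Lemma~\ref{quotient_superset} the union of enumerated relations generates precisely the defining congruence of $S_{k_\infty, j_\infty}$. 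Second, distinct pairs $(k, j)$ yield non-isomorphic $S_{k, j}$: the absorbing element $z = a^{k+1}$ is unique and intrinsic; $k$ is recovered as the length of the longest chain $x \to x \cdot g \to x \cdot g^2 \to \cdots$ (for some fixed $g$) of pairwise distinct elements terminating at $z$; and $j$ is then recovered from the cardinality of the set of non-absorbing elements whose $\cdot g$-orbit, for some $g$, is finite and terminates at a $\cdot g$-fixed point distinct from $z$ (this set is precisely row $k$, and its size determines $j$).

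The main obstacle is verifying these non-isomorphism invariants cleanly, without reference to the generators $a$ and $b$. The absorbing element and the longest-chain invariant into it are immediately intrinsic, so recovering $k$ is straightforward. The delicate point is the intrinsic characterization of row $k$ as opposed to rows $i < k$, since both contain $\cdot b$-orbits; the key fact is that orbits in row $k$ are finite (terminating at the $\cdot b$-fixed point $a^k b^{j+1}$) while those in rows $i < k$ are infinite. Handling the low-dimensional edge cases $k = 0$ and $k = 1$ separately may streamline this analysis.
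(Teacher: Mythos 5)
Your proposal is correct, and its skeleton is the same as the paper's: a stagewise reduction that, whenever the current $\A_{\omega^2}$-least element of $W_e$ has rank $\omega\cdot k+j$, enumerates a block of relations coding $(k,j)$, arranged so that the relations coding a smaller rank imply all relations enumerated earlier; hence the limit presentation is determined by the true least element, and the empty set is sent to the free semigroup. (The paper uses the single coding equation $x_0^jx_1^k=x_0^{j+1}x_1^k$ together with infinitely many ``deletion'' equations $x_0x_1^\ell=x_0^2x_1^\ell$ for $\ell>k$; your three relations per stage achieve the same monotonicity, and your justification of it is sound, though for the case $k'<k$ the precise point is that $a^{k}$ itself is already the zero of $S_{k',j'}$.) Where you genuinely diverge is the verification that distinct final ranks give non-isomorphic semigroups. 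The paper gets this for free from structure theory: since finitely generated commutative semigroups are finitely presented, $\CS{2}$ has ACC, so by Observation~\ref{acc_hom_open_facts} a strict quotient is never isomorphic to the original, and the two limit presentations are by construction strict quotients of one another's congruences. You instead compute explicit isomorphism invariants of the concrete semigroups $S_{k,j}$: the absorbing element $z$, the maximal length of a chain $x,xg,xg^2,\dots$ of distinct elements terminating at $z$ (which recovers $k$, since only multipliers with positive $a$-exponent reach $z$ and they raise the row by at least one each step), and the cardinality of the set of non-absorbing elements having a finite translation orbit ending at a fixed point other than $z$ (which is exactly row $k$ and recovers $j$; its size is $j+2$ for $k\geq 1$ and $j+1$ for $k=0$, which is fine). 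These claims do check out against the normal-form description of $S_{k,j}$, so your sketch closes. The trade-off: your route is self-contained and elementary—it does not rely on R\'edei's theorem or the ACC machinery—but it pays for this with the invariant analysis you yourself flag as delicate, whereas the paper's appeal to ACC makes the non-isomorphism step one line and is the template it reuses for the generalization to $\emin(\omega^n)\leq_c\iCS{n}$.
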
 
\begin{proof}
	We build a computable reduction $f$ by constructing a $\iCS{2}$-instance $W_{f(e)}$ (with generators $x_0,x_1$) from a $\emin(\omega^2)$-instance $W_e$ in stages as follows:
    \begin{itemize}
        \item As long as $W_e$ stays empty, leave $W_{f(e)}$ empty as well.
		\item If an element enters $W_e$, s.t.\ now the least element has rank $\omega \cdot i + j$, we do the following:
		\begin{itemize}
			\item We put $x_0^j x_1^i = x_0^{j+1} x_1^{i}$ in $W_{f(e)}$ to encode the current situation.
			\item We want to ensure that no information from previous stages remains. So we also add $x_0 x_1^k = x_0^2 x_1^k$, for $k > i$.
		\end{itemize}
	\end{itemize}

    It remains to be verified that this is a reduction, i.e. 
    \[
	e ~\emin(\omega^2)~ i \iff \CS{2}[f(e)] \cong \CS{2}[f(i)].
	\]
    ``$\implies$": Observe that for any $\emin(\omega^2)$-instance there is a stage at which it stabilizes to the actual least element of rank $\omega \cdot i + j$. Two equivalent $\emin(\omega^2)$-instances have the same least element, i.e. the same $i,j$, so after both have stabilized, the ``coding equation" $x_0^{j+1} x_1^i = x_0^{j+2} x_1^{i}$ and the ``deletion equations" $x_0 x_1^k = x_0^2 x_1^k$ for $k > i$ are added to both images under $f$ (the $\iCS{2}$-instances). Now, it is easy to check that by the compatibility of congruence relations with function application, these two equations always imply every equation added at a previous stage. Therefore, both $\iCS{2}$-instances have the same congruence relations and are thus isomorphic.

    ``$\impliedby$": Assume that two $\emin(\omega^2)$-instances $S_1, S_2$ are not equivalent. So after stabilizing, the least element of $S_1$ with rank $m_1 := \omega \cdot i_1 + j_1$ and the least element of $S_2$ with rank $m_2 := \omega \cdot i_2 + j_2$ are different. W.l.o.g.\ assume $m_1 < m_2$. Let $\A_1, \A_2$ be the presentations that are the images of $S_1$ and $S_2$, respectively, under $f$. It is straightforward to check that, by our construction, the last two equations (the ``coding equation" and the ``deletion equation") that are added to $\A_1$ imply any equation added to $\A_2$. So $\A_1$ is a quotient of $\A_2$. Furthermore, it is a strict quotient, since the last coding equation entering $\A_1$ cannot be a consequence in $\A_2$. Due to ACC, strict quotienting must change the isomorphism type, so $\A_1$ and $\A_2$ are, in both cases, not isomorphic.
\end{proof}

The same idea works for a generalized version.
\begin{theorem}
        $\emin(\omega^n) \leq_c \iCS{n}$, for $n \geq 2$.
\end{theorem}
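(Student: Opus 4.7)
The plan is to mimic the $n = 2$ construction, encoding each rank $\alpha < \omega^n$ by a monomial in the $n$ generators. Writing $\alpha$ in Cantor normal form as $\omega^{n-1} c_{n-1} + \cdots + \omega c_1 + c_0$, I associate the encoding monomial $M_\alpha = x_0^{c_0} x_1^{c_1} \cdots x_{n-1}^{c_{n-1}}$ and, for each $k \in \{1, \ldots, n-1\}$, the deletion monomial $N_k^{(\alpha)} = x_k^{c_k+1} x_{k+1}^{c_{k+1}} \cdots x_{n-1}^{c_{n-1}}$. At each stage $s$, once the minimum element of $W_e$ has rank $\alpha_s$, I enumerate into $W_{f(e)}$ the coding equation $x_0 M_{\alpha_s} = x_0^2 M_{\alpha_s}$ together with the $n-1$ deletion equations $x_0 N_k^{(\alpha_s)} = x_0^2 N_k^{(\alpha_s)}$. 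Specializing to $n = 2$ yields the coding $x_0^{j+1} x_1^i = x_0^{j+2} x_1^i$ and the single deletion $x_0 x_1^{i+1} = x_0^2 x_1^{i+1}$, which combined with multiplications by $x_1^k$ recovers the full schema of Proposition \ref{emin_below_cs2}.

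For the ``$\Longrightarrow$'' direction, I need to show that two inputs eventually stabilizing at the same minimum rank $\alpha$ yield isomorphic algebras. Since equations only accumulate, it suffices to check that the coding and deletion equations for $\alpha$ imply every equation enumerated at any earlier stage, corresponding to some rank $\beta > \alpha$ with coefficients $(d_0, \ldots, d_{n-1})$. Let $k^*$ be the largest index where $d_{k^*} > c_{k^*}$; necessarily $d_j = c_j$ for all $j > k^*$. If $k^* = 0$, then $M_\beta = x_0^{d_0 - c_0} M_\alpha$, and multiplying the coding equation $x_0 M_\alpha = x_0^2 M_\alpha$ by $x_0^{d_0 - c_0}$ derives $x_0 M_\beta = x_0^2 M_\beta$. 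If $k^* \geq 1$, then $M_\beta = P \cdot N_{k^*}^{(\alpha)}$ for a monomial $P$ (an explicit factorization based on the $d_j$), and multiplying the deletion equation at $k^*$ by $P$ derives $x_0 M_\beta = x_0^2 M_\beta$. The same factorization strategy handles each old deletion equation $x_0 N_k^{(\beta)} = x_0^2 N_k^{(\beta)}$; the case $k > k^*$ is immediate since there $N_k^{(\beta)} = N_k^{(\alpha)}$.

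For the ``$\Longleftarrow$'' direction, I recover the tuple $(c_0, \ldots, c_{n-1})$ as an isomorphism invariant of the algebra. Partition the semigroup into rows $R(a_1, \ldots, a_{n-1}) = \{x_0^{a_0} x_1^{a_1} \cdots x_{n-1}^{a_{n-1}} : a_0 \geq 0\}$. A direct analysis of the congruence shows: a row collapses to exactly two elements whenever some deletion applies (i.e., for some $k \geq 1$, $a_k \geq c_k + 1$ and $a_j \geq c_j$ for all $j > k$); the distinguished ``coding row'' $(c_1, \ldots, c_{n-1})$ has exactly $c_0 + 2$ elements; and all other rows are infinite. Recovering the coefficients top-down, $c_{n-1}$ is the smallest $a_{n-1}$ admitting a finite row, $c_{n-2}$ the smallest $a_{n-2}$ admitting a finite row with $a_{n-1} = c_{n-1}$, and so on down to $c_1$; then $c_0$ is the size of the distinguished row minus two. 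This uniquely determines $\alpha$, so different ranks yield non-isomorphic algebras.

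The main obstacle will be the combinatorial bookkeeping. For the forward implication, I must verify carefully that a single witness $N_{k^*}^{(\alpha)}$ simultaneously derives the previous coding equation and each previous deletion equation $x_0 N_k^{(\beta)} = x_0^2 N_k^{(\beta)}$ for $k \leq k^*$; this requires the explicit factorization $N_k^{(\beta)} = P' \cdot N_{k^*}^{(\alpha)}$, with $P'$ built from the $d_j$ for $k \leq j \leq k^*-1$ together with the excess $d_{k^*} - c_{k^*} - 1$ of $x_{k^*}$. For the converse, I must check that the inductive row analysis genuinely separates the coding row from the deletion rows; this reduces to showing that no row with $a_j < c_j$ for some $j \geq 1$ can be finite, so that each $c_j$ is forced to be the component-wise minimum of the $j$-th coordinate over finite rows compatible with the already-recovered higher coordinates. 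Once these two combinatorial points are settled, the reduction and its correctness follow as in the $n = 2$ case.
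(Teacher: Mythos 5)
Your construction and your ``$\implies$'' verification are essentially the paper's: the paper enumerates, at each stage, the coding equation together with the infinite schema of deletion equations $x_0x_1^{k_1}\cdots x_{n-1}^{k_{n-1}} = x_0^2 x_1^{k_1}\cdots x_{n-1}^{k_{n-1}}$ over all tuples of strictly larger $\omega$-part, whereas you enumerate only $n-1$ deletion equations per stage; your factorization argument correctly shows that these finitely many equations imply every equation from earlier stages, so this is a harmless (slightly tidier) variant of the same idea.

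The genuine gap is in the ``$\impliedby$'' direction. The tuple you ``recover'' is not, as written, an isomorphism invariant: the rows $R(a_1,\ldots,a_{n-1})$ are fibers of the map reading off the exponents of $x_1,\ldots,x_{n-1}$, i.e.\ they are defined relative to the chosen generating tuple, and your top-down recovery of $c_{n-1},\ldots,c_1$ additionally uses the ordering of those generators. An isomorphism between the algebras coded by two different ranks need not send generators to generators, let alone respect this coordinate structure (Example \ref{ex:non_trivial_isomorphism} is precisely a warning about this), so ``the two presentations yield different tuples'' does not yet give ``the two algebras are non-isomorphic.'' To close this you would have to make the row data intrinsic (e.g.\ first characterizing $x_0$ as the only generator that some element absorbs, then dealing with possible permutations of $x_1,\ldots,x_{n-1}$ --- this is where the real work sits), whereas the verification you flag (separating the coding row from deletion rows) is not the hard point. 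The paper sidesteps all of this: for $\beta>\alpha$ the rank-$\alpha$ equations imply the rank-$\beta$ equations but the new coding equation is not derivable from the rank-$\beta$ ones, so the algebra for the smaller rank is a \emph{proper} quotient of the algebra for the larger rank; since finitely generated commutative semigroups satisfy ACC on congruences and are therefore Hopfian (Observation \ref{acc_hom_open_facts}, via R\'edei's theorem), a proper quotient is never isomorphic to the original. Your own forward-direction computation already gives the quotient half, so this is the quickest repair. (Minor additional slip: when $c_1=\cdots=c_{n-1}=0$ the row with all $a_j=0$ has no $a_0=0$ element, since a semigroup has no empty word, and the coding row then has $c_0+1$ rather than $c_0+2$ elements; this does not create collisions but your recovery formula needs the case distinction.)
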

\begin{proof}
        We generalize the previous proof to encode $n$ coefficients. We use the equation $x_0^{c_0}x_1^{c_1}...x_{n-1}^{c_{n-1}} = x_0^{c_0 + 1}x_1^{c_1}...x_{n-1}^{c_{n-1}}$ to encode that $\omega^{n-1}c_{n-1} + ... + \omega c_1 + c_0$ is the rank of the least element. For erasing information from previous stages we add the equations $x_0x_1^{k_1}...x_{n-1}^{k_{n-1}} = x_0^{2}x_1^{k_1}...x_{n-1}^{k_{n-1}}$ for all $k_1,...,k_{n-1}$ for which $\omega^{n-1}k_{n-1} + ... + \omega k_1 > \omega^{n-1}c_{n-1} + ... + \omega c_1$. Verification of this being a reduction works analogously.
\end{proof}

Next, we show that $\emin(\omega^n)$ is optimal, i.e.\ $\emin(\omega^n+1)$ does not reduce to $\iCM{n}$ and thus also not to $\iCS{n}$. This requires a bit of setup. 

In the following, let $\mathbb{N}_\infty := \mathbb{N} \cup \{\infty\}$. The natural order on $\mathbb{N}_\infty$ induces a partial order $\leq$ on $n$-tuples of elements from $\mathbb{N}_\infty$. For a $v \in \mathbb{N}_\infty^n$ and a set $S \subseteq \mathbb{N}^n$, we say \emph{$v$ beats $S$} if $v \not\geq w$ for all $w \in S$.

Let $F_n = \{ x_0^{\epsilon_0} ... x_{n-1}^{\epsilon_{n-1}}\}$ be the free commutative monoid, where we consider $x_0^{\epsilon_0} ... x_{n-1}^{\epsilon_{n-1}}$ as the canonical representative of each element, and let $\le_{\lex}$ be the natural lexicographic (well-)order on $F_n$. For $i < n$, we define $\pi_i: F_n \to \nat$ by $\pi_i(x_0^{\epsilon_0} ... x_{n-1}^{\epsilon_{n-1}}) = \epsilon_i$.

    We first give a (partial) invariant for every presentation $\A = \langle X_n \mid W\rangle$ in $\CM{n}$ that is nontrivial (i.e.~$\A\not\cong F_n$). 
    Let $\varphi: F_n \to \A$ be the natural projection map and define $$S(\A) := \{ (\epsilon_0,...,\epsilon_{n-1})\in \mathbb{N}^n \mid (x_0^{\epsilon_0} ... x_{n-1}^{\epsilon_{n-1}}) >_{\lex} \min(\varphi^{-1}(x_0^{\epsilon_0} ... x_{n-1}^{\epsilon_{n-1}}))\}.$$ Note that since $\A$ is nontrivial, $S(\A)$ is nonempty. Intuitively, we represent each equivalence class by its least member, so $S(\A)$ is the collection of (exponents of) words that no longer represent any elements of $\A$.

    It is easy to see that the words that $S(\A)$ represents are closed under multiplying by any generator $x_i$, $i<n$. Namely, if $({\epsilon_0},... ,{\epsilon_{n-1}}) \in S(\A)$, then for any $i<n$, $({\epsilon_0},..., \epsilon_i+1,..., {\epsilon_{n-1}})$ is also in $S(\A)$.
    We can equivalently say that $S(\A)$ is closed upwards under the natural partial order.

Now, for every set $M \subseteq \mathbb{N}^n$ we can define an invariant $\gamma(M) := \omega^{n-1} a_{n-1} + ... + \omega a_1 + a_0$ by
\begin{align*}
&A_i := \{x \in \mathbb{N}_\infty^n \mid \text{$x = (x_0,...,x_{n-1})$ has $i$-many $\infty$, $x$ beats $M$, }\\ &\text{and no $x^\prime := (x^\prime_0,...,x^\prime_{n-1}) \neq x$ with $x^\prime_j = x_j$ or $x^\prime_j = \infty$ for $j<n$ beats $M$}\}\\
&a_i := |A_i|.
\end{align*}

\begin{figure}
\begin{center}
    \begin{tikzpicture}[scale=1]

  \draw[step=0.5cm, gray!40, very thin] (0,0) grid (6,6);

  \draw[->, very thick] (-0.2,-0.2) -- (6.5,-0.2) node[right] {$x_0$};
  \draw[->, very thick] (-0.2,-0.2) -- (-0.2,6.5) node[above] {$x_1$};

  \coordinate (A) at (1.5,5);
  \coordinate (B) at (4,2);
  \coordinate (C) at (2,4);

\fill[gray!40, opacity=0.6]
    (A) -- (2,5) -- (C) -- (4,4) --
    (B) -- (7.5,2) -- (7.5,7.5) -- (1.5,7.5) --
    cycle;

      \node at (4.5,5) {$S(\A)$};

    \fill (1.5,4.5) circle (1pt);
    \fill (1.5,4) circle (1pt);
    \fill (1.5,3.5) circle (1pt);
    \fill (1.5,3) circle (1pt);
    \fill (1.5,3) circle (1pt);
    \fill (1.5,2.5) circle (1pt);
    \fill (1.5,2) circle (1pt);

    \fill (2,3.5) circle (1pt);
    \fill (2,3) circle (1pt);
    \fill (2,2.5) circle (1pt);
    \fill (2,2) circle (1pt);

    \fill (2.5,3.5) circle (1pt);
    \fill (2.5,3) circle (1pt);
    \fill (2.5,2.5) circle (1pt);
    \fill (2.5,2) circle (1pt);

    \fill (3,3.5) circle (1pt);
    \fill (3,3) circle (1pt);
    \fill (3,2.5) circle (1pt);
    \fill (3,2) circle (1pt);

    \fill (3.5,3.5) circle (1pt);
    \fill (3.5,3) circle (1pt);
    \fill (3.5,2.5) circle (1pt);
    \fill (3.5,2) circle (1pt);

\draw (0,0) -- (0,7.5);
\draw (0.5,0) -- (0.5,7.5);
\draw (1,0) -- (1,7.5);

\draw (0,0) -- (7.5,0);
\draw (0,0.5) -- (7.5,0.5);
\draw (0,1) -- (7.5,1);
\draw (0,1.5) -- (7.5,1.5);

\end{tikzpicture}
\end{center}
\caption{A visualization of how the invariant is calculated for a particular $S(\A)$ of a $2$-generated presentation $\A$. The dots represent elements in $A_0$ and the lines represent elements in $A_1$. From their cardinalities, we get $\gamma(A) = \omega\cdot 7 + 22$.}
\label{fig:invariant_example}
\end{figure}
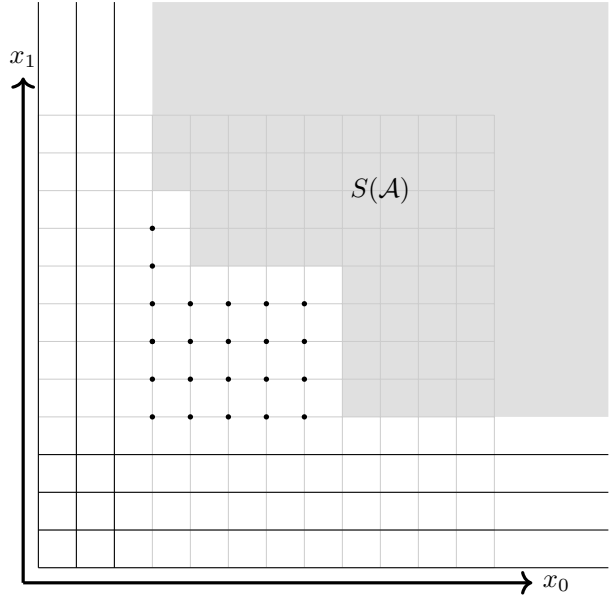

From this we can define an invariant $\gamma(\A)$ for a non-trivial presentation $\A$ by $\gamma(\A) := \gamma(S(\A))$. Figure \ref{fig:invariant_example} visualizes this invariant for an example of a $2$-generated presentation. 

For this definition to be useful to us, we must ensure that the coefficients are natural numbers. So we need the corresponding sets to be finite.

\begin{observation}\label{CMn_invariant finite}
    In the above definition of $\gamma(M)$, for all $i<n$, $A_i$ is finite.
\end{observation}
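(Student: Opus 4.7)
The plan is to combine Dickson's lemma on $\mathbb{N}^n$ with the $\infty$-maximality clause in the definition of $A_i$ to uniformly bound every finite coordinate of every element of $A_i$, and then count.

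First, observe that whether $x \in \mathbb{N}_\infty^n$ beats $M$ depends only on the upward closure of $M$. By Dickson's lemma, $(\mathbb{N}^n,\leq)$ is a well-quasi-order, so the antichain of minimal elements of $M$ is finite; call them $\tilde{w}^{(1)},\dots,\tilde{w}^{(k)}$. Then $x$ beats $M$ iff $x \not\geq \tilde{w}^{(r)}$ for every $r \leq k$. Set
\[
B := 1 + \max_{r \leq k,\, j < n} \tilde{w}^{(r)}_j,
\]
a finite constant depending only on $M$ (with the convention $B := 1$ if $M = \emptyset$).

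Next, the key claim is: for any $x \in A_i$ with $\infty$-coordinate set $I \subseteq \{0,\dots,n-1\}$ (so $|I|=i$) and any $j \notin I$, one has $x_j < B$. To see this, let $x'$ be obtained from $x$ by replacing $x_j$ with $\infty$; then $x' \neq x$ and $x'_k \in \{x_k,\infty\}$ for all $k$, so by the maximality clause in the definition of $A_i$, $x'$ fails to beat $M$. Hence $x' \geq w$ for some $w \in M$, and therefore $x' \geq \tilde{w}^{(r)}$ for some minimal element $\tilde{w}^{(r)} \leq w$. Since $x_k = x'_k$ whenever $k \neq j$, we get $x_k \geq \tilde{w}^{(r)}_k$ for all $k \neq j$. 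But $x$ itself beats $M$, so $x \not\geq \tilde{w}^{(r)}$: the only coordinate where $x$ can fail to dominate $\tilde{w}^{(r)}$ is $j$, forcing $x_j < \tilde{w}^{(r)}_j < B$.

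Finally, there are $\binom{n}{i}$ possible choices of $I$, and for each fixed $I$ the finite coordinates of $x$ form a tuple in $\{0,\dots,B-1\}^{n-i}$, so $|A_i| \leq \binom{n}{i} B^{n-i} < \infty$. The only non-routine ingredient is Dickson's lemma, used to ensure $M$ has finitely many minimal elements; once that is available, the $\infty$-maximality condition yields the uniform coordinate bound essentially for free, so I do not expect a substantive obstacle.
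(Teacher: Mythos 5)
Your proof is correct, but it takes a genuinely different route from the paper. The paper argues by contradiction: if $A_i$ were infinite, the chain--antichain principle would give an infinite antichain, which is ruled out by observing that Dickson's Lemma also holds for $\mathbb{N}_\infty^n$, or an infinite chain, in which case a suitably chosen element $x$ of the chain and the supremum $x'$ of the chain satisfy $x'\neq x$, $x'_j\in\{x_j,\infty\}$ for all $j$, and $x'$ still beats $M$, contradicting the maximality clause in the definition of $A_i$. You instead apply Dickson's Lemma only on $\mathbb{N}^n$, to extract the finitely many minimal elements of $M$, and use the maximality clause directly to bound every finite coordinate of an element of $A_i$ by the constant $B$, giving the explicit estimate $|A_i|\le\binom{n}{i}B^{n-i}$; your coordinate-bounding step (replace $x_j$ by $\infty$, note the result fails to beat $M$, and locate the failing coordinate of $x$ against a minimal element) is sound. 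Your approach is more elementary and quantitative --- it avoids the chain--antichain principle, the extension of Dickson's Lemma to $\mathbb{N}_\infty^n$, and the supremum-of-a-chain argument --- and it is close in spirit to the coordinate-bounding argument the paper uses later in Observation~\ref{CMn_invariant_approximate} to actually compute the sets $A_i$; what the paper's argument buys is brevity, needing neither the minimal elements of $M$ nor the constant $B$. A minor remark: for $M=\emptyset$ the convention $B=1$ is unnecessary, since then every $x'$ beats $M$, the maximality clause fails for every $x$, and $A_i=\emptyset$ outright.
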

\begin{proof}

    Towards a contradiction, assume $A_i$ is infinite. By the chain-antichain principle, $A_i$ must contain either an infinite antichain or an infinite chain. The former we can exclude due to Dickson's Lemma (as it is easy to see that Dickson's Lemma also holds for $\mathbb{N}_\infty^n$). In the latter case, take an element $x = (x_0,...,x_{n-1})$ in the chain s.t.\ for all $i<n$ either $x_i$ is maximal for coordinate $i$ in the chain or no maximal element exists at coordinate $i$ in the chain. Now consider the supremum $x^\prime = (x^\prime_0,...,x^\prime_{n-1})$ of the infinite chain. Notice that $x^\prime \neq x$, and that $x^\prime_j = x_j$ or $x^\prime_j = \infty$ for all $i<n$. Furthermore, it is easy to verify that also $x^\prime$ beats $M$, a contradiction to $x$ being in $A_i$.
\end{proof}

\begin{observation}\label{CMn_invariant_strictly_decreases}
    If $M \supseteq N$ then $\gamma(M) \leq \gamma(N)$. Furthermore, if $M,N$ are closed upwards (under the natural partial order) then $M \supset N$ implies $\gamma(M) < \gamma(N)$.
\end{observation}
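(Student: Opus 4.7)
Since $\gamma$ depends only on $\uparrow M$, I may assume both $M$ and $N$ are upward-closed in $\mathbb{N}^n$. Inclusion $N \subseteq M$ makes ``beating $M$'' at least as restrictive as ``beating $N$'', so every beater of $M$ is automatically a beater of $N$; the real work is matching the ``tight'' beaters (those in $B^M := \bigsqcup_i A_i^M$) counted by $\gamma$.

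The plan is to build a greedy extension map $f \colon B^M \to B^N$: for $x \in B^M$, process the coordinates $j = 0, 1, \dots, n-1$ in a fixed order, and whenever the current $j$-th coordinate is finite and replacing it by $\infty$ in the running value still yields a beater of $N$, commit to that replacement; let $f(x)$ be the final value. Because ``beating $N$'' is downward closed in $\mathbb{N}_\infty^n$, no finite coordinate of $f(x)$ can be further replaced by $\infty$ while preserving a beater of $N$, so $f(x) \in B^N$ and the number of infinities weakly increases: $i(f(x)) \ge i(x)$.

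The heart of the proof is the following preimage dichotomy: for each $y \in B^N$, either (a) $y \in B^M$, in which case $f^{-1}(y) = \{y\}$, or (b) $y \notin B^M$, in which case every $x \in f^{-1}(y)$ satisfies $i(x) < i(y)$. Case (b) is essentially by construction: if $i(x) = i(y)$, greedy added no $\infty$, so $x = f(x) = y$, contradicting $x \in B^M$ but $y \notin B^M$. For (a), if $x \in f^{-1}(y)$ with $x \ne y$, then since greedy only turns finite coordinates into $\infty$, we have $y \in L_x \setminus \{x\}$, and $y$ beats $M$, contradicting the uniqueness-in-$L_x$ clause in the definition of $A_{i(x)}^M$.

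From the dichotomy I will extract the lexicographic/CNF comparison level by level, from the top index down. At $j = n-1$, the dichotomy forces $f$ to act as the identity on $A_{n-1}^M$ into $A_{n-1}^N$, yielding $A_{n-1}^M \subseteq A_{n-1}^N$ and hence $a_{n-1}(M) \le a_{n-1}(N)$. If the inequality is strict, CNF gives $\gamma(M) < \gamma(N)$; otherwise $A_{n-1}^M = A_{n-1}^N \subseteq B^M$, and the dichotomy now prevents any $x \in A_{n-2}^M$ from being sent into $A_{n-1}^N$ (the image would lie in $B^M$, forcing $x = f(x)$ contrary to $i(x) = n-2 < n-1$), so $f$ fixes $A_{n-2}^M$ into $A_{n-2}^N$; iterate. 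Either some level strictly drops or all $a_j$ agree, giving $\gamma(M) \le \gamma(N)$. For the strict part under upward-closed $M \supsetneq N$: equality at every level forces $B^M = B^N$; but every beater of $M$ greedy-extends to an element of $B^M$, so the downward closure of $B^M$ in $\mathbb{N}_\infty^n$ is the full set of beaters of $M$, and likewise for $N$. Hence $M$ and $N$ have the same beaters in $\mathbb{N}^n$, i.e., $M = N$, contradicting $M \supsetneq N$. The main obstacle is rigging the greedy construction so that the uniqueness-in-$L_x$ clause defining $B^M$ propagates to the uniqueness-in-fiber property in case (a); once that holds, the rest is routine level-by-level bookkeeping on finite coefficients.
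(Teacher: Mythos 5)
Your proof is correct, and its engine is the same as the paper's: starting from a tight beater of $M$ (which automatically beats $N$ since $N\subseteq M$), raise finite coordinates to $\infty$ as long as the result still beats $N$, landing on a tight beater of $N$ with at least as many infinities. The paper packages this as a purely local exchange claim --- for every $z\in A_i\setminus B_i$ there is an element of $B_j\setminus A_j$ with $j>i$ (taking a raise of $z$ with maximally many infinities that beats $N$) --- and then looks at the largest index where the two level sets differ; you instead turn the raising into a globally defined greedy map $f\colon B^M\to B^N$, prove the fiber dichotomy, and run a top-down induction showing $A_j^M\subseteq A_j^N$ level by level while the coefficients agree. The bookkeeping is different but both yield the same conclusion, and your dichotomy/induction steps check out (the single-pass greedy does give full tightness, since beating is downward closed, and set equality from equal cardinalities uses the finiteness of the $A_i$ established in Observation~\ref{CMn_invariant finite}). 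The genuine added value of your route is the second half: the paper dismisses the existence of a differing index (for upward-closed $M\supsetneq N$) as ``easy to check'', whereas you actually prove it, via $\downarrow B^M$ being exactly the set of beaters of $M$, which for upward-closed $M$ is the complement of $M$ in $\mathbb{N}^n$ --- so $B^M=B^N$ would force $M=N$. One shared caveat (not a defect relative to the paper): both your greedy map and the paper's choice of a maximal raise implicitly assume the all-$\infty$ tuple does not beat $N$, i.e.\ $N\neq\emptyset$; this is harmless in the intended application since $S(\A)$ is nonempty for nontrivial $\A$, but the observation as literally stated fails for $N=\emptyset$ (e.g.\ $n=1$, $M=\{5,6,\dots\}$, $N=\emptyset$ gives $\gamma(M)=5>0=\gamma(N)$), so you might add the nonemptiness hypothesis explicitly.
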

\begin{proof}
Assume that $M \supseteq N$. For $i<n$, let $A_i$ be the sets from which $\gamma(M) = \omega^{n-1}a_{n-1} + ... + a_0$ is calculated and let $B_i$ be the sets from which $\gamma(N) = \omega^{n-1}b_{n-1} + ... + b_0$ is calculated. We claim that for each $i<n$ and for each element in $A_i \setminus B_i$ there is an element in $B_j \setminus A_j$ for a $j>i$. Then, in particular, we have $A_{n-1} \subseteq B_{n-1}$. This implies that for the largest $k$ with $A_k \neq B_k$ it must be that $A_{k} \subsetneq B_{k}$. Furthermore, it is easy to check that under the assumption that $M,N$ are closed upwards and $M \supset N$, such a $k$ with $A_k \neq B_k$ must exist.

To prove the claim, let $i<n$ and let $z \in A_i \setminus B_i$. Observe that since $z$ beats $M$, it must also beat $N$. So for $z$ not to be in $B_i$ there must be a $z^\prime = (z^\prime_0,...,z^\prime_{n-1}) \neq z$ with $z^\prime_k = z_k$ or $z^\prime_k = \infty$ for $k<n$ that beats $N$. Pick such a $z^\prime$ that has a maximal number $j$ of coordinates with value $\infty$. It is easy to check that this $z^\prime$ is in $B_j$. However, $z^\prime$ does not beat $M$ (otherwise $z$ would not be in $A_i$) and therefore $z^\prime \in B_j \setminus A_j$.
\end{proof}

In summary, $\gamma$ is a strictly decreasing $\nat^n$ invariant for nontrivial presentations in $\CM{n}$. However, note that $\gamma$ may be different for two presentations of the same isomorphic structure (like e.g.\ for the presentations in Example \ref{ex:non_trivial_isomorphism}). Also, it is easy to see that two different presentations may have the same $\gamma$ and so even for presentations it is not a complete invariant.

    In the following, we observe that we can computably approximate $\gamma$ ``from above".

\begin{observation}\label{CMn_invariant_approximate}
    There is a computable function that, given a non-trivial presentation $\A = \langle X_n \mid W_e\rangle$ and $s \in \omega$, outputs a $\gamma_s(\A)$, s.t.\
    \begin{enumerate}
        \item $\gamma(\A) \leq \gamma_s(\A)$ for all $s \in \omega$,
        \item $\gamma_t(\A) \leq \gamma_s(\A)$ for all $t\geq s$, 
        \item there is a stage $s$ s.t.\ $\gamma_t(\A) = \gamma(\A)$ for all $t \geq s$.
    \end{enumerate}
\end{observation}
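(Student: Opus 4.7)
The plan is to define, at each stage $s$, an upward-closed approximation $M_s \subseteq S(\A)$ and to set $\gamma_s(\A) := \gamma(M_s)$. Concretely, at stage $s$ we enumerate derivations of length at most $s$ from the equations in $W_{e,s}$ together with the equational axioms of $\CM{n}$; whenever we obtain an equation $x^v = x^{v'}$ with $v' <_{\lex} v$, we \emph{record} $v$, and take $M_s$ to be the upward closure (under the coordinatewise order on $\mathbb{N}^n$) of the set of tuples recorded so far. Although $M_s$ is infinite, its antichain of minimal generators is finite by Dickson's lemma and is effectively computable from the finite list of recorded tuples. By construction $M_s \subseteq S(\A)$, and enlarging $s$ can only add recorded tuples, so $M_s \subseteq M_{s+1}$.

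Properties (1) and (2) then follow immediately from Observation~\ref{CMn_invariant_strictly_decreases} applied to the upward-closed inclusions $M_s \subseteq S(\A)$ and $M_s \subseteq M_t$ for $t \geq s$, giving $\gamma(\A) = \gamma(S(\A)) \leq \gamma(M_s) = \gamma_s(\A)$ and $\gamma_t(\A) \leq \gamma_s(\A)$.

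For property (3), the key observation is that $S(\A)$ is itself upward-closed in $\mathbb{N}^n$, so by Dickson's lemma it has a \emph{finite} antichain of minimal elements $m_1, \dots, m_k$. Each $m_j$ lies in $S(\A)$, so there exists $v'_j <_{\lex} m_j$ with $x^{m_j} \sim x^{v'_j}$ in $\A$, witnessed by a finite derivation from $W_e$ and the axioms of $\CM{n}$. At some finite stage $s_0$, each such derivation has been enumerated, so that $\{m_1,\dots,m_k\}$ is contained in the set of recorded tuples for $M_{s_0}$. From that point on, the upward closure of the recorded tuples contains the upward closure of $\{m_1,\dots,m_k\}$, which is exactly $S(\A)$; combined with $M_s \subseteq S(\A)$, this gives $M_s = S(\A)$ and hence $\gamma_s(\A) = \gamma(\A)$ for every $s \geq s_0$.

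The main obstacle is the effective computation of $\gamma(M_s)$ from the finite antichain of minimal generators at stage $s$. This reduces to locating the finite sets $A_i^{M_s}$; by Observation~\ref{CMn_invariant finite} these are finite, and their supports are controlled by the largest coordinate appearing among the minimal generators of $M_s$, so a bounded search (on tuples in $\mathbb{N}_\infty^n$ with coordinates below this bound or equal to $\infty$) suffices to enumerate them and compute $\gamma(M_s)$.
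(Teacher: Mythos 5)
Your proposal is, in substance, the paper's own proof: you approximate $S(\A)$ by the finite portion derivable by stage $s$, get (1) and (2) from Observation~\ref{CMn_invariant_strictly_decreases}, get (3) from Dickson's lemma applied to the finitely many minimal elements of the upward-closed set $S(\A)$ (each of which is witnessed by a finite derivation), and compute the approximation by a bounded search for the sets $A_i$, bounding the finite coordinates by the largest coordinate occurring in the current generating tuples. The only cosmetic difference is that you pass to the upward closure $M_s$ at each stage, whereas the paper works with the raw finite set $S(\A)_s$ and notes that a set of tuples and its upward closure have the same invariant; these are interchangeable. (When asserting that the finite coordinates of members of $A_i$ are bounded, the reason is the paper's replacement argument: if some finite coordinate exceeded the coordinatewise maximum of the generators, setting it to $\infty$ would produce a distinct tuple still beating $M_s$, contradicting membership in $A_i$; it is worth saying this explicitly.)

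One small point needs patching: at stages before any tuple has been recorded you have $M_s=\emptyset$, and then $\gamma(M_s)=0$, since the invariant has no $\omega^{n}$-coefficient and every tuple with a finite coordinate fails the maximality clause (replacing that coordinate by $\infty$ gives another tuple that vacuously beats $\emptyset$). So condition (1) fails at such stages, and Observation~\ref{CMn_invariant_strictly_decreases} cannot be invoked with the empty set (e.g.\ for $n=1$, $M=\{k,k+1,\dots\}\supseteq\emptyset$ but $\gamma(M)=k>0=\gamma(\emptyset)$). The fix is exactly the paper's ``w.l.o.g.\ $S(\A)_s\neq\emptyset$'': since $\A$ is nontrivial, $S(\A)$ is nonempty, so on input $s$ search forward to the first stage $s'\geq s$ at which some tuple has been recorded and output $\gamma(M_{s'})$; this keeps the function total on nontrivial presentations and restores (1) and (2).
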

\begin{proof}
    Notice that, given a presentation $\A$, $S(\A)$ can be computably enumerated from an enumeration of the generated congruence relation. By $S(\A)_s$ we denote $S(\A)$ enumerated up to stage $s$. Since we only consider non-trivial presentations, we can assume w.l.o.g.\ that $S(\A)_s$ is non-empty. 
    
    Given a presentation $\A$ and $s \in \omega$, we define $\gamma_s(\A) := \gamma(S(\A)_s)$. We claim that we can compute $\gamma_s$. Let $z = (z_0,...,z_{n-1})$ be the coordinate-wise maximum of all elements in the finite set $S(\A)_s$. Let $A_i$, $i<n$, be as in the definition of the invariant, i.e.\ the sets that determine the invariant coefficients. By Observation \ref{CMn_invariant finite}, these are finite sets. Their cardinalities are the coefficients of $\gamma_s$ so it is sufficient to show that we can compute $A_i$, for all $i<n$. We do this inductively by assuming we already computed $A_{j}$ with $j>i$.
    To compute $A_{i}$, go through all $n$-tuples $x = (x_0,...,x_{n-1})$ with $i$-many infinities and $x_k \leq z_k$ for all non-infinity coordinates $k$. Pick an $x$ if it fulfills the requirements of the definition of $A_i$ (this requires having computed all $A_j$, for $j>i$). First, it is easy to see that we have a finite search space, so the computation terminates. Second, any tuple outside the search space cannot be in $A_i$: if an $x = (x_0,...,x_{n-1}) \in A_i$ has $x_k > z_k$, then $(x_0,...,x_{k-1},\infty, x_{k+1},...,x_{n-1})$ would still beat $S(\A)_s$, a contradiction.

    Conditions 1.\ and 2.\ then follow from Observation \ref{CMn_invariant_strictly_decreases}. To verify condition 3., notice that since $S(\A)$ is closed upwards under the natural partial order, it is ``generated by" its minimal elements. By Dickson's Lemma there can only be finitely many minimal elements and thus there is a stage $s$ at which all of them have been enumerated. Since it is easy to observe that any set of tuples has the same invariant as its upwards closure, $\gamma_s(\A) = \gamma(\A)$.
\end{proof}

\begin{theorem}\label{iCM-non-reduction}
    $\emin(\omega^n+1) \not\leq_c \iCM{n}$.
\end{theorem}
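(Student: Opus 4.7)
The strategy is to construct, via the recursion theorem, two instances $W_e, W_i$ of $\emin(\omega^n+1)$ that together defeat any putative computable reduction $f \colon \emin(\omega^n+1) \to \iCM{n}$. The template mirrors Proposition~\ref{uf_fg_emin}: an alternation between an ``equivalent'' mode and a ``non-equivalent'' mode, with each cycle forcing a strict drop of the invariant $\gamma$ from Observation~\ref{CMn_invariant_approximate} in the well-order $\omega^n$. The extra rank $\omega^n$ present in $\emin(\omega^n+1)$ (and absent from $\emin(\omega^n)$) is used to get past the case in which the reduction lands in the free commutative monoid, which is exactly where the tightness comes from.

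Fix $e,i$ by the recursion theorem and start with $W_e = W_i = \emptyset$ in Mode A. In Mode A, wait until $\CM{n}[W_{f(e),s}] \cong \CM{n}[W_{f(i),s}]$; this is decidable by Theorem~\ref{isomorphism_problem_decidable}. When it fires, branch: (i) if both stage-presentations are still free (no non-trivial identities enumerated), put the sentinel element $a_{\omega^n}$ into $W_e$; (ii) otherwise, compute $\gamma_s := \gamma(\CM{n}[W_{f(e),s}])$ via Observation~\ref{CMn_invariant_approximate} and put $a_{\gamma_s}$ into $W_e$. In either branch the newly added rank is strictly below the current common minimum rank of $W_e$ and $W_i$ (in (i) because the current minimum is $\infty$; in (ii) because $\gamma_s < \omega^n$ is below the previously installed minimum), so $W_e$ moves to a different $\emin$-class from $W_i$, and we switch to Mode B. In Mode B, wait for $\CM{n}[W_{f(e),s}] \not\cong \CM{n}[W_{f(i),s}]$ (again decidable), then add the current minimum of $W_e$ to $W_i$ to restore equivalence and switch back to Mode A.

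I claim that, once case (ii) has been entered, the values $\gamma_s$ recorded at successive Mode A observations form a strictly descending chain in $\omega^n$, hence are finite in number. Between two consecutive Mode A observations at stages $s_1 < s_3$, there is a Mode B observation at some intermediate $s_2$. Because the enumerations $W_{f(e),s}$ and $W_{f(i),s}$ only grow, the iso classes of $\CM{n}[W_{f(e),s}]$ and $\CM{n}[W_{f(i),s}]$ can only be further quotiented. A Hopficity argument (from Observation~\ref{acc_hom_open_facts}, using ACC in $\CM{n}$) shows that the common iso class at $s_3$ is a \emph{strict} quotient of the common iso class at $s_1$: otherwise both (quotient) iso classes at $s_2$ would have to coincide with the $s_1$-class, contradicting the observed non-isomorphism at $s_2$. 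Consequently $\CM{n}[W_{f(e),s_3}]$ is a strict quotient of $\CM{n}[W_{f(e),s_1}]$, and Observation~\ref{CMn_invariant_strictly_decreases} gives $\gamma_{s_3} < \gamma_{s_1}$. Together with at most one case-(i) step, the whole alternation has only finitely many cycles. But after the final switch the construction would be stuck waiting in some mode forever, whereas by Lemma~\ref{acc_isom_type} the iso class of each stage-presentation stabilizes at a finite stage, and $f$ being a reduction forces that stabilized iso status to match the $\emin$-class relation between $W_e$ and $W_i$; so the wait condition must eventually trigger, initiating yet another cycle. This contradicts finiteness of the alternation, so no such $f$ can exist.

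The main obstacle I anticipate is the rigorous handling of the strict-quotient step: $\gamma$ is not a complete isomorphism invariant for $\CM{n}$-presentations (unlike the invariant used in Proposition~\ref{uf_fg_emin} for $\iUF{1}{\fg}$), so the strict decrease has to be extracted at the level of congruences through Observation~\ref{CMn_invariant_strictly_decreases}, relying crucially on Hopficity to rule out the coincidence of the common iso classes at $s_1$ and $s_3$.
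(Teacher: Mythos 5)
Your architecture is exactly the paper's: a sentinel of rank $\omega^n$ to handle the free case, an alternation between an ``equivalent'' and a ``non-equivalent'' mode decided via Theorem \ref{isomorphism_problem_decidable}, Hopfianity (from ACC, Observation \ref{acc_hom_open_facts}) to force that each full equivalent--non-equivalent--equivalent cycle produces a proper quotient on \emph{both} sides, and Observation \ref{CMn_invariant_strictly_decreases} to convert that into a strict descent in $\omega^n$, hence finitely many cycles; the final appeal to Lemma \ref{acc_isom_type} to show the wait conditions must keep firing is also correct.

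The one genuine gap is in the step ``compute $\gamma_s := \gamma(\CM{n}[W_{f(e),s}])$ via Observation \ref{CMn_invariant_approximate} and put $a_{\gamma_s}$ into $W_e$,'' together with the assertion that this value lies strictly below the previously installed minimum. Observation \ref{CMn_invariant_approximate} does not compute the true invariant $\gamma$ of the current finite presentation; it only produces over-approximations $\gamma_t \geq \gamma$ that converge to $\gamma$ from above, and exact computability of $\gamma$ is not established anywhere (deciding whether a word is lex-least in its congruence class requires comparing it against infinitely many lex-smaller words). Consequently, a single-stage approximation taken for the \emph{later} presentation need not lie below the approximation you recorded at the previous Mode A observation, even though the \emph{true} values do strictly decrease by Observation \ref{CMn_invariant_strictly_decreases}; if it does not, the minimum of $W_e$ does not move, $W_e$ and $W_i$ remain $\emin(\omega^n+1)$-equivalent, and the mode switch your induction relies on never occurs. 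The repair is exactly the paper's device: at each Mode A firing, enumerate the approximations $\gamma_t$ of the current presentation for increasing $t$ and wait until the minimum of the set actually drops below the previously recorded value. This search terminates because $\gamma_t$ converges to the true $\gamma$ of the later presentation, which is strictly below the true $\gamma$ of the earlier presentation, which in turn is bounded above by the previously recorded approximation. With that modification your argument coincides with the paper's proof.
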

\begin{proof}
    Now suppose $f$ is a computable reduction from $\emin(\omega^n+1)$ to $\iCM{n}$. By the recursion theorem, fix two indices $e$ and $i$ that we control. We consider the following algorithm enumerating the instances $W_e$ and $W_i$ of $\emin(\omega^n+1)$:
    \begin{itemize}
        \item Stage 0. We start with $W_e$ empty and $W_i = \{\omega^n\}$. We wait until at least one of $W_{f(e)}$ or $W_{f(i)}$ is nonempty. Let $s_0$ be the stage in the enumeration of $W_{f(e)}$ and $W_{f(i)}$ at which this happens. We then make $W_e = W_i = \{\omega^n\}$ and proceed to the next stage.

        \item Stage $2\ell-1$. We wait until we see that (after stage $s_{2\ell -2}$) $W_{f(e)}$ and $W_{f(i)}$ are $\iCM{n}$-equivalent. This we can decide by Theorem \ref{isomorphism_problem_decidable}. Let $s_{2\ell -1}$ be the stage in the enumeration of $W_{f(e)}$ and $W_{f(i)}$ at which this happens. 
        Going through $t\geq 0$, we compute the approximation $\gamma_t(\langle X_n \mid W_{f(i), s_{2\ell-1}}\rangle)$ according to Observation \ref{CMn_invariant_approximate} and put it into $W_i$. Once the least element of $W_i$ changes, we stop and proceed to the next stage.

        \item Stage $2\ell$. We wait until we see that (after stage $s_{2\ell -1}$) $W_{f(e)}$ and $W_{f(i)}$ are $\iCM{n}$-non-equivalent. Again, this is decidable. Let $s_{2\ell}$ be the stage in the enumeration of $W_{f(e)}$ and $W_{f(i)}$ at which this happens. 
        We then copy everything in $W_i$ into $W_e$ and proceed to the next stage.
        
    \end{itemize}

    Our construction ensures that at the beginning of the even stages, $W_e\subsetneq W_i$ and they are not $\emin(\omega^n+1)$-equivalent, but $W_{f(e)}$ and $W_{f(i)}$ are $\iCM{n}$-equivalent; and at the beginning of the odd stages, $W_e = W_i$ and, in particular, are $\emin(\omega^n+1)$-equivalent, but $W_{f(e)}$ and $W_{f(i)}$ are not $\iCM{n}$-equivalent. 
    We further claim that at odd stags, if we see $W_{f(e)}$ and $W_{f(i)}$ becoming $\iCM{n}$-equivalent then the construction will always eventually lower the least element of $W_i$ and thus proceed to the next stage.

    Indeed, at every stage $s_{2\ell+1}$ of the enumeration of $W_{f(e)}$ and $W_{f(i)}$, for $\ell > 0$, we consider how $W_{f(e)}$ and $W_{f(i)}$ change since stage $s_{2\ell-1}$. They were $\iCM{n}$-equivalent at stage $s_{2\ell-1}$ but non-equivalent at stage $s_{2\ell}$. Thus, there must be some \emph{new} (i.e.~not implied by the previous identities) identity entering one of the two sets. However, at stage $s_{2\ell+1}$, $W_{f(i)}$ and $W_{f(e)}$ are equivalent again; but by ACC and non-Hopfianity, we cannot have one of them not changing $\iCM{n}$-class. Thus, both of them must have enumerated some new identity. In particular, this means $\langle X_n \mid W_{f(i),s_{2\ell+1}}\rangle$ must be a proper quotient of $\langle X_n \mid W_{f(i),s_{2\ell-1}}\rangle$. Thus, by Observation \ref{CMn_invariant_strictly_decreases},
    $$\gamma(\langle X_n \mid W_{f(i),s_{2\ell+1}}\rangle) < \gamma(\langle X_n \mid W_{f(i),s_{2\ell-1}}\rangle).$$
    The least element of $W_i$ at the beginning of an odd stage $2\ell+1$ was added in stage $2\ell-1$. It is an approximation $\gamma_u(\langle X_n \mid W_{f(i),s_{2\ell-1}}\rangle)$, for some $u>0$, and thus, by Observation \ref{CMn_invariant_approximate}, is above or equal to $\gamma(\langle X_n \mid W_{f(i),s_{2\ell-1}}\rangle)$. So, again by Observation \ref{CMn_invariant_approximate}, there must be a $t$, s.t.\ $\gamma_t(\langle X_n \mid W_{f(i),s_{2\ell+1}}\rangle)$ is strictly below that least element. This proves the claim.
    
    But now, by ACC, $W_{f(i)}$ and $W_{f(e)}$ must eventually stop changing $\iCM{n}$-classes; thus, we will stop at a finite stage. In either odd or even stages, we achieve diagonalization, so $f$ cannot be a reduction. 
\end{proof}

\begin{corollary}
    $\emin(\omega^\omega) \not\leq_c \iCM{\fg}$.
\end{corollary}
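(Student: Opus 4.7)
The plan is to adapt the staged diagonalization of Theorem \ref{iCM-non-reduction} to the finitely generated setting. The critical observation is that once the recursion theorem is applied to fix the diagonalizing indices, the reduction's outputs become fixed natural numbers, so the generator counts of the target presentations are also fixed constants, and the invariant $\gamma$ from the theorem applies verbatim.

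Concretely, suppose for contradiction that $f$ is a computable reduction from $\emin(\omega^\omega)$ to $\iCM{\fg}$. First, I would apply the recursion theorem to fix indices $e, i$ and compute the (now fixed) pairs $f(e) = \langle k_e, e'\rangle$ and $f(i) = \langle k_i, i'\rangle$; set $n := k_i$. The presentation $\langle X_n \mid W_{i'}\rangle$ lies in $\CM{n} \subseteq \CM{\fg}$, which has ACC by Observation \ref{acc_hom_open_facts} and admits the invariant $\gamma$ together with its computable approximation $\gamma_t$ via Observations \ref{CMn_invariant_strictly_decreases} and \ref{CMn_invariant_approximate}, taking values strictly below $\omega^n$.

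Next, I would run essentially the same staged construction as in Theorem \ref{iCM-non-reduction}, with only cosmetic changes. At stage $0$, initialize $W_e = \emptyset$ and $W_i = \{\omega^n\}$; this element is valid in $\emin(\omega^\omega)$ since $\omega^n < \omega^\omega$, and all invariant values enumerated later into $W_i$ will lie strictly below $\omega^n$. At every odd stage $2\ell - 1$, wait for $f(e)$ and $f(i)$ to become $\iCM{\fg}$-equivalent---decidable by Theorem \ref{isomorphism_problem_decidable}, which applies uniformly across the possibly differing generator counts $k_e, k_i$---then lower $\min(W_i)$ via successive approximations $\gamma_t(\langle X_n \mid W_{i', s_{2\ell-1}}\rangle)$. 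At every even stage, wait for non-equivalence and copy $W_i$ into $W_e$.

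The termination-and-contradiction argument is the same as in Theorem \ref{iCM-non-reduction}: between consecutive odd stages, ACC and non-Hopfianity force a proper quotient on $\langle X_n \mid W_{i'}\rangle$, so $\gamma$ strictly decreases; since $\gamma < \omega^n$, the construction halts at a finite stage, producing an equivalence/non-equivalence mismatch between $W_e, W_i$ and $f(e), f(i)$ that contradicts $f$ being a reduction. The main---indeed essentially only---point to verify is that the $\gamma$-decrease argument still applies, which it does because, once the recursion theorem has fixed $e, i$, the $W_{f(i)}$-side presentation lives in the fixed class $\CM{n}$ for which $\gamma$ was defined.
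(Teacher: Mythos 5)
Your proposal is correct and follows essentially the same route as the paper: apply the recursion theorem to fix the diagonalizing indices, observe that the generator count $k$ of the fixed image $f(i)=\langle k, j\rangle$ is then a constant, seed $W_i$ with $\omega^{k}$ (valid since $\omega^{k}<\omega^\omega$), and rerun the staged construction of Theorem \ref{iCM-non-reduction} using the invariant $\gamma$ on $\CM{k}$. The extra remarks you make (decidability of the isomorphism problem across differing generator counts, and that $\gamma$ only ever needs to be applied to the $W_{f(i)}$-side presentation) are exactly the points implicitly used in the paper's proof.
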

\begin{proof}
    We diagonalize against a computable reduction $f$ from $\emin(\omega^\omega)$ to $\iCM{\fg}$. Fix again two indices $e$ and $i$. Let $\langle k,j\rangle$ (which is code for $\langle X_k \mid W_j\rangle_{\CM{}}$) be the image of $i$ under $f$. Do the same construction, only at stage $0$ put $\omega^k$ into $W_i$ (and later into $W_e$).
\end{proof}

Thus, we obtain a picture similar to the results below $\omega^2$ that we had in Section \ref{uf_section} and Section \ref{ag_section}. Only we could not establish reducibility of the isomorphism relations to the $\emin$ relations. The difficulty is that for commutative semigroups, we do not know of any useful (complete) invariant for isomorphism types. In particular, the known proof that their isomorphism problem (i.e.\ for finite presentations) is decidable (see Theorem \ref{isomorphism_problem_decidable}) is not uniform, i.e.\ the instance (a pair of presentations) of the problem is reduced as a whole to another (decidable) problem, and there is no known reduction that uniformly maps presentations. So even a positive answer to the following, rather innocent, question might require a better understanding of the isomorphism problem for commutative semigroups. 

\begin{question}
    Is there a computable ordinal $\alpha$, s.t.\ $\emin(\alpha)$ is above $\iCS{n}$, for $n \geq 2$ (or above $\iCS{\fg}$)?
\end{question}

\subsection{Implications of ACC}
In the remaining part of Section \ref{sec:below_eqce}, we show some general results concerning isomorphism relations below $\eqce$. It turns out that ACC has a lot of implications, most importantly being below $\eqce$ (see Theorem \ref{acc_upper_bound}). These general results also make the picture of our example varieties $\AG{}, \UF{1}{}, \CS{}, \CM{}$ more complete (see Figure \ref{fig:below=ce}).

First, we generalize a result that we had for $\UF{1}{}$ and $\AG{}$, namely the $\not\geq_c$-direction of Proposition \ref{uf_fg_emin} and Proposition \ref{ag_fg_emin}. Recall that $\IP(\V{})$ denotes the isomorphism problem for finitely presented algebras in $\V{}$.
\begin{theorem}
    Let $\V{}$ be a variety s.t.\ $\V{\fg}$ has ACC and $\IP(\V{})$ is decidable. Let $(\alpha_n)_{n \geq 1}$ be a sequence of ordinals s.t.\ for all $n \geq 1$, $\iV{n} \leq_c \emin(\alpha_n)$. Then $\emin(\lambda) \not\leq_c \iV{\fg}$, where $\lambda$ is the supremum of $(\alpha_n)_{n \geq 1}$.
\end{theorem}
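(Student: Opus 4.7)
The plan is to mimic the diagonalization of Theorem~\ref{iCM-non-reduction}, replacing the commutative-monoid-specific invariant $\gamma$ by an abstract $\emin(\alpha_K)$-invariant pulled back through a reduction witnessing $\iV{K}\leq_c\emin(\alpha_K)$. Suppose toward a contradiction that $f$ is a computable reduction $\emin(\lambda) \leq_c \iV{\fg}$, and apply the recursion theorem to fix indices $e,i$ whose c.e.\ sets we build by the algorithm below. Once $e,i$ are fixed, compute $f(e) = \langle k_e,j_e\rangle$, $f(i) = \langle k_i,j_i\rangle$, set $K := \max(k_e,k_i)$, and fix a computable reduction $g$ witnessing $\iV{K} \leq_c \emin(\alpha_K)$---crucially, chosen \emph{after} $K$ is known.

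Next, pass from $\V{\fg}$ to $\V{K}$: from each $\langle k_n,j_n\rangle$ with $n\in\{e,i\}$, effectively construct an index $n^*$ for a $K$-generated presentation of the same algebra, obtained by enumerating $W_{j_n}$ together with the finitely many padding identities $x_{k_n}\approx x_0,\ldots,x_{K-1}\approx x_0$ that collapse the extra generators. Setting $u := g(e^*)$ and $v := g(i^*)$, at every stage $\V{\fg}[f(e)]\cong\V{\fg}[f(i)]$ iff $W_u \emin(\alpha_K) W_v$; in particular, the $\A_{\alpha_K}$-rank of $\min(W_v)$ is a well-founded invariant that strictly drops whenever the $\iV{K}$-class of $\V{\fg}[f(i)]$ properly descends.

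Since $\alpha_K < \lambda$, pick some $\mu_0 \in \omega$ of $\A_\lambda$-rank $\alpha_K$ and construct $W_e,W_i$ by the same alternation used in Theorem~\ref{iCM-non-reduction}. At odd stages (where $W_e=W_i$ and hence they are $\emin(\lambda)$-equivalent), use decidability of $\IP(\V{})$ on finite approximations of $W_{f(e)},W_{f(i)}$ (together with Lemma~\ref{acc_isom_type} guaranteeing eventual stabilization under ACC) to wait until the two current finite presentations are isomorphic; then enumerate into $W_i$ a fresh element of $\A_\lambda$-rank $\beta_\ell$ equal to the current $\A_{\alpha_K}$-rank of $\min(W_v)$, breaking the equivalence. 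At even stages, wait for $\iV{\fg}$-non-isomorphism and copy the newest element of $W_i$ into $W_e$. If any waiting step never terminates, $f$ fails to reflect the current $\emin(\lambda)$-relation between $W_e,W_i$, yielding the desired contradiction directly.

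Otherwise the construction completes infinitely many cycles, and the termination argument from Theorem~\ref{iCM-non-reduction} applies verbatim: between consecutive odd stages, if only one of $W_{f(e)},W_{f(i)}$ had changed $\iV{K}$-class, then restoring isomorphism would force the other to be isomorphic to its own proper quotient, violating Hopfianity---which is inherited by $\V{K}$ from ACC on $\V{\fg}$ via Observation~\ref{acc_hom_open_facts}. Hence both images become strict proper quotients each cycle, so (as $g$ is a reduction) the $\A_{\alpha_K}$-rank of $\min(W_v)$ strictly decreases; the resulting infinite strictly descending chain contradicts $\A_{\alpha_K}$ being a well-order. The main obstacle I anticipate is this ``uniform lift through $g$'': locating a single finite $K$ via the recursion theorem, and then honestly computing the padded indices $e^*,i^*$ from $f(e),f(i)$ so that the hypothetical bound $\iV{K}\leq_c\emin(\alpha_K)$ can actually be wielded; once this is in place, the rest of the argument is essentially Theorem~\ref{iCM-non-reduction} with $\gamma$ replaced by the abstract invariant $\min_{\A_{\alpha_K}}(W_v)$.
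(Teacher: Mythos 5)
Your overall strategy is the same as the paper's: diagonalize via the recursion theorem, use decidability of $\IP(\V{})$ to detect isomorphism/non-isomorphism of finite approximations, use ACC/Hopfianity both to force strict descent of the isomorphism type across cycles and to guarantee termination, and exploit the hypothesized bound $\iV{K}\leq_c\emin(\alpha_K)$ for a single $K$ fixed non-uniformly after the indices are known, composed with a computable padding map. (The paper does exactly this, composing $g$ with a map $h$ that pads to $k>k_i$ generators.)

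There is, however, a genuine gap in how you deploy $g$. You set $v:=g(i^*)$ for the single fixed index $i^*$ of the \emph{growing} presentation, and at each odd stage you enumerate an element whose rank is ``the current $\A_{\alpha_K}$-rank of $\min(W_v)$''. But $W_v$ is one c.e.\ set whose eventual minimum encodes the $\emin(\alpha_K)$-class of the \emph{final} algebra $\V{K}[i^*]$; the stage-$s$ value $\min(W_{v,s})$ is just an over-approximation of that single final value and is not tied to the isomorphism type of the stage-$s$ snapshot. Nothing prevents $g$ from enumerating the true final minimum into $W_v$ very early; then $\beta_1=\beta_2=\cdots$, so at the second odd stage the element you add to $W_i$ is already the least element of $W_e$, the equivalence is \emph{not} broken, and the following even-stage wait never terminates --- with no contradiction, since $W_e$ and $W_i$ genuinely are $\emin(\lambda)$-equivalent at that point. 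So the step ``breaking the equivalence'' is unjustified and the diagonalization can stall. The repair is what the paper does: at each odd stage compute a \emph{fresh} index for the finite snapshot of the relation set and enumerate $W_{g\circ h(\cdot)}$ of that snapshot into $W_i$; the minimum of that image is an honest invariant of the current isomorphism type, it strictly decreases across cycles by monotonicity (Lemma~\ref{monotonicity_lemma}, Lemma~\ref{f_preserves_order}) together with Hopfianity (Observation~\ref{acc_hom_open_facts}), and the non-equivalence claim then goes through. Two minor further points: $\alpha_K<\lambda$ need not hold (the supremum may be attained), though you only need $\alpha_K\le\lambda$ to transfer ranks; and the paper chooses the number of generators strictly larger than $k_i$ so that the padding identities are non-trivial, which its stage-$0$ argument uses (your single-element enumeration happens to sidestep this).
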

\begin{proof}
    Assume that there exists a computable reduction $f$ from $\emin(\lambda)$ to $\iV{\fg}$. In stages, we construct two instances $W_i$ and $W_j$ of $\emin(\lambda)$ that cannot be correctly mapped to $\iV{\fg}$-instances by $f$. By the recursion theorem, we fix the indices $i,j$. They are mapped by $f$ to presentations $\langle X_{k_i} \mid W_{e_i}\rangle$ and $\langle X_{k_j} \mid W_{e_j}\rangle$ respectively. Non-uniformly fix $k > k_i$ and let $g$ be the reduction witnessing $\iV{k} \leq_c \emin(\lambda)$. By Observation \ref{emin_red_well_def}, we can assume w.l.o.g.\ that $g$ is well-defined on c.e.\ sets. Furthermore, let $h$ be a function that turns a $k_i$-generated finite presentation into a $k$-generated presentation, i.e.\ for any index $e$,
    \[
    \langle X_{k_i} \mid W_e \rangle \cong \langle X_k \mid W_{h(e)} \rangle.
    \]
    Observe that $h$ is computable: in the $k$-generated presentation, the extra generators $x_{k-1},...,x_{k_i}$ can be collapsed with $x_0$ to get rid of them.
    
    Now, the construction of $W_i, W_j$:
    \begin{itemize}
        \item We start with both $W_i$ and $W_j$ empty. 
        \item Stage $2\ell$. At this stage, $W_i$ and $W_j$ are in the same $\emin(\lambda)$-class. We wait for some stage $s_{2\ell}$ after $s_{2\ell -1}$ (if $\ell >0$) s.t.\ $\langle X_{k_i} \mid W_{e_i, s_{2\ell}}\rangle$ and $\langle X_{k_j} \mid W_{e_j,s_{2\ell}} \rangle$ are isomorphic. We can decide this by our assumption that $\IP(\V{})$ is decidable. And this has to eventually happen, because we assume that $f$ is a reduction and due to ACC the isomorphism types must stabilize at a finite stage (Lemma \ref{acc_isom_type}). So once we found this stage, start enumerating into $W_i$ all elements in $W_{g \circ h (e)}$, where $e$ is an index of $W_{e_i,s_{2\ell}}$. 
        We claim that this makes $W_i$ and $W_j$ non-$\emin(\lambda)$-equivalent.

        \item Stage $2\ell + 1$. At this stage, $W_i$ and $W_j$ are in different $\emin(\lambda)$-classes. We wait until we see that the presentations are non-isomorphic (again, this is decidable by our assumption that $\IP(\V{})$ is decidable). We call this stage $s_{2\ell +1}$. Once that happens (if that never happens, $f$ is not a reduction), start enumerating into $W_j$ all elements in $W_{g \circ h (e)}$, where $e$ is an index of $W_{e_i,s_{2\ell}}$. We claim that this makes $W_i$ and $W_j$ $\emin(\lambda)$-equivalent.
            \end{itemize}

        But due to Lemma \ref{acc_isom_type}, the isomorphism types of the two presentations must eventually stabilize, either at an even or an odd stage. In both cases we obtain diagonalization, as $f$ does not correctly map $W_i$ and $W_j$. 

        The only thing left to prove is the claim that after even stages, $W_i$ and $W_j$ are not $\emin(\lambda)$-equivalent, and after odd stages they are $\emin(\lambda)$-equivalent. We do this inductively.
        \begin{itemize}
            \item For stage $0$, i.e.\ $W_j$ is still empty, note that we chose $k$ to be strictly above $k_i$. Thus, the presentation $\langle k \mid h(W_i)\rangle$ must be a non-isomorphic quotient of $\langle k \mid \emptyset \rangle$. Thus, by monotonicity (Lemma \ref{monotonicity_lemma}) of $g$, $W_i$ will not be enumerated towards the empty set anymore.
            \item For any even stage $2\ell$, $\ell > 0$, first notice that since $h$ preserves the isomorphism type and the reduction $g$ is well-defined on c.e.\ sets and thus monotone (Lemma \ref{monotonicity_lemma}), we know that the images under $g \circ h$ of $\langle X_{k_i} \mid W_{e_i,s}\rangle$, for increasing stages $s$, will have decreasing least elements - strictly decreasing if the isomorphism type changes. By induction hypothesis, our claim is true for all stages before $2\ell$. Thus, our construction is valid up to stage $2\ell$ and ensures that between stages $s_{2(\ell -1)}$ and $s_{2\ell}$, the presentations (the images of $W_i, W_j$ under $f$) become non-isomorphic and then isomorphic again. Since ACC implies non-Hopfianity (Observation \ref{acc_hom_open_facts}), therefore, $\langle X_{k_i} \mid W_{e_i, s_{2(\ell-1)}}\rangle \not \cong \langle X_{k_i} \mid W_{e_i,s_{2\ell}}\rangle$. Now, after stage $2\ell$, $W_i$ is enumerated towards the image under $g \circ h$ of the latter, while $W_j$ is still enumerated towards the image under $g \circ h$ of the former. Thus, they are not $\emin(\lambda)$-equivalent.
            \item For any odd stage $2\ell + 1$, just observe that both are enumerated towards the image under $g \circ h$ of $W_{e_i, s_{2\ell}}$.
        \end{itemize}
        In the proof of the claim, to improve readability, we neglected the fact that $W_i, W_j$ may also contain elements enumerated at previous stages. But since, as observed in the proof of the claim, the images under $g \circ h$ of $\langle X_{k_i} \mid W_{e_i,s}\rangle$ for increasing stages $s$, will have decreasing least elements. So, the elements from previous stages are not relevant. 

        This completes the proof of the theorem. 
\end{proof}

We already know from Proposition \ref{uf_emin} and Proposition \ref{ag_emin} that, for $n \geq 1$, the $\iUF{1}{n}$ and $\iAG{n}$ both form a strictly ascending sequence under $c$-reducibility. We now show that this holds for any variety with ACC in its finitely generated classes and a decidable isomorphism problem (in particular also $\CS{}$ and $\CM{}$). 
\begin{theorem}\label{acc_implies_strictly_increasing}
Let $\V{}$ be variety s.t.\ $\V{\fg}$ has ACC and $\IP(\V{})$ is decidable. Then $\iV{n} <_c \iV{n+1}$. 
\end{theorem}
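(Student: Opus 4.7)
For the direction $\iV{n} \leq_c \iV{n+1}$, I would define the computable reduction $h: \omega \to \omega$ by setting $W_{h(e)} := W_e \cup \{(x_n, x_0)\}$, where the single extra equation is viewed over the enlarged generator set $X_{n+1}$. The equation $x_n \approx x_0$ collapses the extra generator, so $\V{n+1}[h(e)]$ is isomorphic as an algebra to $\V{n}[e]$. Since isomorphism does not depend on the chosen presentation, $e_1 \iV{n} e_2 \iff h(e_1) \iV{n+1} h(e_2)$, giving the reduction.

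For the direction $\iV{n+1} \not\leq_c \iV{n}$, I would assume toward a contradiction that $f$ is such a reduction and run a recursion-theorem diagonalization in the spirit of Theorem \ref{iCM-non-reduction} and Proposition \ref{uf_fg_emin}. Using the recursion theorem, fix indices $a, b$ for $\iV{n+1}$-instances that we control, and build $W_a, W_b$ in stages beginning with $W_a = W_b = \emptyset$, so both present the free $(n+1)$-generated algebra in $\V{}$. Alternate between two phases: in an ``equivalent'' phase, $\V{n+1}[a] \cong \V{n+1}[b]$ and we wait --- using decidability of $\IP(\V{})$ together with Lemma \ref{acc_isom_type} --- for $f$ to certify $f(a) \iV{n} f(b)$; then transition to a ``non-equivalent'' phase by enumerating into $W_a$ a new equation producing a strict quotient of $\V{n+1}[a]$, non-isomorphic to $\V{n+1}[b]$ by Hopfianity (Observation \ref{acc_hom_open_facts}), and wait for $f$ to certify $f(a) \not\iV{n} f(b)$; finally restore equivalence by mirroring the same equation into $W_b$, and repeat.

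The contradiction would come from the well-foundedness provided jointly by ACC on $\V{n+1}$ (bounding the chain of strict quotients on the source side) and ACC on $\V{n}$ (bounding the number of $\iV{n}$-class changes $f$ can exhibit on the target side). Each full cycle strictly decreases the ordinal rank of $\V{n+1}[a]$ in the well-founded quotient order, so the construction must terminate in finitely many cycles. At termination, either $f$ fails to respond to a required toggle within finite time --- giving immediate diagonalization --- or the alternation would have forced $f(a), f(b)$ to change $\iV{n}$-class more times than ACC on $\V{n}$ permits.

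The main obstacle is making the ``budget mismatch'' between the two ACCs precise: we need to ensure that the chain of quotients in $\V{n+1}$ is deep enough to outlast $f$'s target-side budget on $\V{n}$. Resolving this requires choosing each new equation $e_k$ adaptively, exploiting that the congruence lattice of the free $(n+1)$-generated algebra admits finite chains of unbounded length whenever $\V{}$ is non-trivial (which we may assume, as otherwise both $\iV{n}$ and $\iV{n+1}$ are the trivial relation). The extra generator $x_n$ genuinely enlarges the quotient structure beyond anything an $n$-generated algebra can match under the monotone (by Lemma \ref{monotonicity_lemma}) map $f$, and this asymmetry is what the diagonalization must extract.
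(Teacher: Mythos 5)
The easy direction $\iV{n}\le_c\iV{n+1}$ (collapse the extra generator) is fine and is exactly what the paper takes for granted. The problem is in your non-reducibility argument, and it is a genuine gap rather than a presentational one. Your contradiction is supposed to come from a ``budget mismatch'' between ACC on the source side and ACC on the target side, with the source chain of strict quotients chosen adaptively so as to ``outlast'' the target. But ACC gives no uniform ordinal bound on either side: it only says that each \emph{fixed} c.e.\ enumeration of identities changes isomorphism type finitely often, so there is no ``number of class changes that ACC on $\V{n}$ permits'' which the alternation could exceed. Moreover, by the recursion theorem you must commit to the indices $a,b$ before you see how often $f$ responds, and your supply of toggles on the source side can simply run out: your claim that the congruence lattice of the free $(n+1)$-generated algebra admits finite chains of unbounded length in any non-trivial variety is false (for pure sets, or for semilattices, the free algebra on $n+1$ generators is finite, yet the hypotheses ACC and decidable $\IP$ hold). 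If the construction halts because $\V{n+1}[a]$ no longer has a proper quotient (or only the trivial one) while $f$ is still willing to respond, you have diagonalized against nothing: an adversarial $f$ that mirrors your moves and answers exactly as many times as your source can toggle survives your construction, so no contradiction is reached.

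The paper's proof closes precisely this hole by never searching for fresh equations at all. After first enumerating $x_{n-1}=x_n$ into $W_e$ (so that the source becomes effectively $n$-generated), at each later even stage it copies the \emph{current} finite content of the target $W_{f(e)}$ into $W_e$, and at odd stages copies $W_e$ into $W_i$. That the copy produces a genuinely new source isomorphism type is deduced from the target's own behaviour: for the targets to go from isomorphic to non-isomorphic and back, $\V{n}[f(e)]$ must have strictly changed type (Hopfianity via ACC, Observation \ref{acc_hom_open_facts}), so $W_e$ (carrying the newer target equations) and $W_i$ (carrying the older ones) present non-isomorphic algebras. Thus the fuel for the next toggle is supplied exactly when the target responds, and no existence or depth analysis is needed. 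The contradiction is then not any ordinal counting but simply Lemma \ref{acc_isom_type}: the isomorphism types of the fixed c.e.\ presentations $W_{f(e)},W_{f(i)}$ stabilize, so the construction freezes at some (even or odd) stage in a configuration where source-equivalence and target-equivalence disagree. If you want to salvage your version, you should replace the adaptive choice of equations and the budget argument by this feedback mechanism; as written, the argument does not go through. (Your explicit side remark excluding the trivial variety is fine, and is in fact a point the paper leaves implicit.)
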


\begin{proof}
Assume that there exists a computable reduction $f$ from $\iV{n+1}$ to $\iV{n}$. In stages, we construct two instances $W_e$ and $W_i$ of $\iV{n+1}$ that cannot be correctly mapped to $\iV{n}$-instances by $f$. By the recursion theorem, we fix the indices $e,i$. 
    \begin{itemize}
        \item Stage $0$. We start with both $W_e$ and $W_i$ empty. Thus, at this stage, $\V{n+1}[e]$ and $\V{n+1}[i]$ look isomorphic. We look at the stages of enumerations of $W_{f(e)}$ and $W_{f(i)}$ and wait until $\V{n}[f(e)]$ and $\V{n}[f(i)]$ look isomorphic at some stage. This must eventually happen; otherwise, we could stop the construction and $f$ would not be a reduction (remember that due to ACC, by Lemma \ref{acc_isom_type} the isomorphism type stabilizes at a finite stage). Once that happens, define this stage (of the enumerations of $W_{f(e)}$ and $W_{f(i)}$) as stage $t_0$. Then put $\{x_{n-1} = x_{n}\}$ into $W_e$.
        \item Stage $1$. At this stage, $\V{n+1}[e]$ and $\V{n+1}[i]$ look non-isomorphic. We look at the stages of enumerations of $W_{f(e)}$ and $W_{f(i)}$ and wait until $\V{n}[f(e)]$ and $\V{n}[f(i)]$ look non-isomorphic at some stage after $t_0$. Again, this must eventually happen; otherwise $f$ would not be a reduction. Once that happens, define this stage (of the enumerations of $W_{f(e)}$ and $W_{f(i)}$) as stage $t_1$. Then put $\{x_{n-1} = x_{n}\}$ into $W_i$.
        \item Stage $2k$, $k \geq 1$. At this stage, $\V{n+1}[e]$ and $\V{n+1}[i]$ look isomorphic. We look at the stages of enumerations of $W_{f(e)}$ and $W_{f(i)}$ and wait until $\V{n}[f(e)]$ and $\V{n}[f(i)]$ look isomorphic at some stage after $t_{2k-1}$. Again, this must eventually happen; otherwise $f$ would not be a reduction. Once that happens, define this stage (of the enumerations of $W_{f(e)}$ and $W_{f(i)}$) as stage $t_{2k}$. Then put the set of equations that are currently (i.e.\ at stage $t_{2k}$) in $W_{f(e)}$ into $W_e$. Observe that this makes $\V{n+1}[e]$ and $\V{n+1}[i]$ look non-isomorphic: currently (stage $2k$) $\V{n+1}[e]$ looks isomorphic to $\V{n}[f(e)]$ at stage $t_{2k}$, and $\V{n+1}[i]$ looks isomorphic to $\V{n+1}[e]$ after stage $2(k-1)$ which looks isomorphic to $\V{n}[f(e)]$ at stage $t_{2(k-1)}$. And between stages $t_{2(k-1)}$ and $t_{2k}$, $\V{n}[f(e)]$ and $\V{n}[f(i)]$ became non-isomorphic and then isomorphic again. Due to ACC and Observation \ref{acc_hom_open_facts}, we know that quotienting always results in a new isomorphism type and thus $\V{n}[f(e)]$ has at stage $t_{2k}$ a different isomorphism type than at stage $t_{2(k-1)}$.
        \item Stage $2k + 1$, $k \geq 1$. At this stage, $\V{n+1}[e]$ and $\V{n+1}[i]$ look non-isomorphic. We look at the stages of enumerations of $W_{f(e)}$ and $W_{f(i)}$ and wait until $\V{n}[f(e)]$ and $\V{n}[f(i)]$ look non-isomorphic at some stage after $t_{2k}$. Again, this must eventually happen; otherwise, $f$ would not be a reduction. Once that happens, define this stage (of the enumerations of $W_{f(e)}$ and $W_{f(i)}$) as stage $t_{2k+1}$. Then copy all of $W_e$ into $W_i$. Since, by our construction, $W_i$ is always a subset of $W_e$, they will now be equivalent as sets. 
        \end{itemize}
But due to Lemma \ref{acc_isom_type}, the isomorphism types of the two $\V{n}$-presentations must eventually stabilize, either at an even or an odd stage. In both cases we obtain diagonalization, as $f$ does not correctly map $W_i$ and $W_j$. 
\end{proof}

\begin{proposition}\label{prop:cs_cm_chain}
        $\iCS{2} \leq_c \iCM{2} \leq_c \iCS{3} \leq_c \iCM{3} \leq_c \dots$
\end{proposition}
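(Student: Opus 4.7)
The plan is to establish the chain by proving two families of reductions: $\iCS{n} \leq_c \iCM{n}$ and $\iCM{n} \leq_c \iCS{n+1}$, each for every $n \geq 2$. Both reductions exploit the natural passage between commutative semigroups and commutative monoids via an identity element.

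For $\iCS{n} \leq_c \iCM{n}$, I would send an index $e$ to an index $f(e)$ such that $\CM{n}[f(e)] := \langle X_n \mid W_e\rangle_{\CM{}}$, i.e.\ take the very same relations but now interpreted in the richer signature of $\CM{}$. The conversion between the two term encodings is computable. The structural claim to verify is that $\CM{n}[f(e)]$ is canonically $\CS{n}[e]$ with a fresh identity $1$ adjoined. The reason is that the equivalence relation on the free commutative monoid that uses the $W_e$-congruence on non-empty products and keeps $\{1\}$ as a singleton class is already a congruence containing $W_e$; by minimality, the congruence generated by $W_e$ cannot identify $1$ with any non-empty product. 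Correctness is then immediate: any monoid isomorphism preserves the unique identity and restricts to a semigroup isomorphism of the non-identity parts, while any semigroup isomorphism extends by $1 \mapsto 1$.

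For $\iCM{n} \leq_c \iCS{n+1}$, I would promote the identity to an extra generator $x_n$. Given a monoid presentation $\langle X_n \mid W_e\rangle_{\CM{}}$, the reduction outputs the semigroup presentation on $X_{n+1}$ whose relations are the finite set $\{x_n x_i = x_i : 0 \leq i \leq n\}$, which in particular forces $x_n$ to be idempotent and to act as identity on every generator, together with each equation of $W_e$ in which the constant $1$ is replaced by $x_n$. This translation is computable, so the output is c.e.\ uniformly in $e$. An argument analogous to the one above shows that the resulting semigroup is exactly $\CM{n}[e]$ viewed as a semigroup, with $x_n$ as its identity. Correctness follows because any semigroup isomorphism between semigroups that each possess a two-sided identity must preserve that identity, and hence is the same as a monoid isomorphism of the underlying monoids.

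The only delicate steps are the two structural identifications: that the congruence generated by $W_e$ in $\CM{n}[f(e)]$ does not collapse $1$ with any non-trivial product, and, dually, that the added identity-forcing relations in the semigroup reduction really do make $x_n$ the unique identity of the quotient (with the rest forming the underlying semigroup of $\CM{n}[e]$). Both are handled by exhibiting an explicit congruence partition compatible with the relations and invoking minimality; no further machinery is needed, and the verification of the iff-conditions is routine once the image structures are described.
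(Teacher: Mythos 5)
Your proposal is correct and follows essentially the same route as the paper: the same two families of reductions, $\iCS{n} \leq_c \iCM{n}$ by reinterpreting the relations in the monoid signature (so the monoid is the semigroup with a fresh identity adjoined), and $\iCM{n} \leq_c \iCS{n+1}$ by turning the extra generator into the identity element, with correctness in both cases resting on the fact that (semigroup) isomorphisms preserve identities. The only difference is cosmetic: you verify the key identification by exhibiting an explicit congruence with $\{1\}$ as a singleton class and invoking minimality, whereas the paper argues that monoid derivations between non-identity elements can avoid the term $1$; both are sound.
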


\begin{proof}
    Clearly, $\iCM{n}$ is reducible to $\iCS{n+1}$, since for any $n$-generated commutative monoid there is an isomorphic $(n+1)$-generated commutative semigroup by making the additional generator into an identity element.
    
    Showing $\iCS{n} \leq_c \iCM{n}$ is also straightforward but requires a bit more work to verify.
    To define a computable reduction $f$, for a given $W_e$, encoding a set of equations $R$, let $W_{f(e)}$ also encode the same set $R$. Remember that we cannot just define $f$ as the identity map, because the encoding of equations on terms is different in $\CS{n}$ and $\CM{n}$.
    It remains to be confirmed that isomorphism and non-isomorphism are preserved. Let $\T_\CS{}(X_n)$ be the term algebra over $X_n$ in the signature of commutative semigroups and $\T_\CM{}(X_n)$ the term algebra over $X_n$ in the signature of commutative monoids. Consider the semigroup $\mathcal{S} := \T_\CS{}(X_n)/R_\CS{}$, where $R_\CS{}$ is the congruence relation on $\T_\CS{}(X_n)$ generated by $R$. Thus, $\mathcal{S}$ is presented by $\CS{n}[e]$. Similarly, the analogously defined monoid $\mathcal{M} := \T_\CM{}(X_n)/R_\CM{}$ is presented by $\CM{n}[f(e)]$. We claim that $\varphi: \mathcal{S} \to \mathcal{M}, [t]_{R_\CS{}} \mapsto [t]_{R_\CM{}}$ is a semigroup embedding that is surjective onto $\mathcal{M} \setminus \{[1]_{M}\}$:
    \begin{itemize}
        \item Well-defined: $t_1 R_\CS{} t_2$ implies $t_1 R_\CM{} t_2$, since the derivation of equivalence of $t_1$ and $t_2$ in $R_\CS{}$ is also valid in $R_\CM{}$.
        \item Injective: for $t_1,t_2 \in \T_\CS{}(X_n)$, $t_1 R_\CM{} t_2 \implies t_1 R_\CS{} t_2$. To see this, notice that for a derivation of equivalence of $t_1$ and $t_2$ in $R_\CM{}$, there also exists one that does not use the identity element $1$. This is because, in every equivalence class except the class of $1$, there is a term that does not contain $1$ as a subterm. So we can also derive the equivalence of $t_1$ and $t_2$ in $R_\CS{}$.
        \item Homomorphism and surjective onto $\mathcal{M} \setminus \{[e]_{M}\}$: follows from the definition.
    \end{itemize}
    It is now easy to check that for any two semigroups $\mathcal{S}_1, \mathcal{S}_2$ and their respective images $\mathcal{M}_1, \mathcal{M}_2$ under $f$, an isomorphism between $\mathcal{S}_1,\mathcal{S}_2$ can be composed with $\varphi$ to get an isomorphism between $\mathcal{M}_1,\mathcal{M}_2$, and also vice versa.
\end{proof}

\begin{corollary}
$\iCS{\fg} \equiv_c \iCM{\fg}$.
\end{corollary}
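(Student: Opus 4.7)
The plan is to verify that both reductions exhibited in Proposition \ref{prop:cs_cm_chain} are already uniform in the number of generators, so they lift directly from the $n$-generated setting to the finitely generated setting. Recall that an index for $\V{\fg}$ has the form $\langle k,e\rangle$, coding the presentation $\langle X_k\mid W_e\rangle_{\V{}}$, so a reduction between $\iCS{\fg}$ and $\iCM{\fg}$ amounts to a computable function on such pairs.

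For $\iCS{\fg}\leq_c\iCM{\fg}$, I would define $f(\langle k,e\rangle)=\langle k,e'\rangle$, where $e'$ is obtained from $e$ by the same computable re-encoding of identities used in the proof of $\iCS{n}\leq_c\iCM{n}$ (the encodings of pairs of terms differ between the two signatures, but the translation is computable and independent of $k$). The verification that $\langle X_k\mid W_e\rangle_{\CS{}}\cong\langle X_k\mid W_{e'}\rangle_{\CM{}}\setminus\{1\}$ as a semigroup embedding, and hence that isomorphism is preserved in both directions, is literally the argument already given in Proposition \ref{prop:cs_cm_chain}; since the argument makes no essential use of $n$, it carries over without change.

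For $\iCM{\fg}\leq_c\iCS{\fg}$, I would define $g(\langle k,e\rangle)=\langle k+1,e''\rangle$, where the new presentation has an extra generator $x_k$ together with the identities declaring that $x_k$ behaves as the monoid identity of $\langle X_k\mid W_e\rangle_{\CM{}}$ (namely $x_k x_j=x_j$ for each $j<k$, together with the translation of $W_e$ into the semigroup signature). Again, this is exactly the construction sketched in Proposition \ref{prop:cs_cm_chain} for $\iCM{n}\leq_c\iCS{n+1}$, and its correctness is uniform in $k$. Combining $f$ and $g$ gives computable reductions in both directions, proving $\iCS{\fg}\equiv_c\iCM{\fg}$. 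No genuine obstacle arises, since the main content was already absorbed into Proposition \ref{prop:cs_cm_chain}; the only thing to check is that the reductions behave uniformly in the generator count, which they manifestly do.
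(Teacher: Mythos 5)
Your proposal is correct and is essentially the paper's own argument: the paper proves the corollary simply by observing that the reductions of Proposition \ref{prop:cs_cm_chain} are uniform in the number of generators, which is exactly the uniformity you spell out explicitly with the maps $\langle k,e\rangle\mapsto\langle k,e'\rangle$ and $\langle k,e\rangle\mapsto\langle k+1,e''\rangle$.
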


\begin{proof}
    Observe that we have uniformity in the previous Proposition \ref{prop:cs_cm_chain}. 
\end{proof}

The next obvious question is whether the reducibilities in Proposition \ref{prop:cs_cm_chain} are strict. To this, we can only give a partial answer, and it is left open whether or not $\iCS{n} \leq_c \iCM{n}$ is strict.

\begin{proposition}
    $\iCM{n} <_c \iCS{n+1}$, for $n \geq 1$.
\end{proposition}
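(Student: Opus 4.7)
Since Proposition \ref{prop:cs_cm_chain} already yields $\iCM{n} \leq_c \iCS{n+1}$, the task reduces to showing $\iCS{n+1} \not\leq_c \iCM{n}$. The plan is to transcribe the diagonalization from Theorem \ref{acc_implies_strictly_increasing} into this cross-variety setting, with a single structural adjustment.

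The adjustment is to use the extra generator $x_n$ of $\CS{n+1}$ as a virtual identity. Set $I_n := \{x_n^2 = x_n\} \cup \{x_n x_j = x_j : 0 \leq j < n\}$, and let $\tau$ be the syntactic translation on terms that replaces each occurrence of the monoid identity $1$ by $x_n$. For any set $W$ of $\CM{}$-equations over $\{x_0,\ldots,x_{n-1}\}$, the $\CS{}$-presentation $\langle X_{n+1} \mid I_n \cup \tau(W)\rangle_{\CS{}}$ should then be isomorphic, as a commutative semigroup, to $\langle X_n \mid W\rangle_{\CM{}}$ viewed as a commutative semigroup; moreover, two commutative monoids are $\CS{}$-isomorphic iff they are $\CM{}$-isomorphic, since any semigroup isomorphism between monoids must send the unique identity to the unique identity.

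With this encoding in hand, the diagonalization mirrors that of Theorem \ref{acc_implies_strictly_increasing}. Suppose $f$ is a computable reduction from $\iCS{n+1}$ to $\iCM{n}$. By the recursion theorem fix indices $e,i$ and start with $W_e = W_i = \emptyset$. I alternate between ``looks iso'' and ``looks non-iso'' stages for the images $\CM{n}[f(e)]$ and $\CM{n}[f(i)]$, both conditions being decidable by Theorem \ref{isomorphism_problem_decidable}. At Stage $0$, after the images look iso, add $I_n$ to $W_e$: this makes $\CS{n+1}[e]$ a $\CS{}$-copy of the free commutative monoid on $n$ generators, which cannot be $\CS{}$-isomorphic to $\CS{n+1}[i]$ (the free commutative semigroup on $n+1$ generators) since only the former possesses an identity element. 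At later even stages $2k \geq 2$, after the images look iso again, add $\tau(W_{f(e)})$ (at its current finite approximation) to $W_e$, so that $\CS{n+1}[e]$ becomes $\CS{}$-iso to the current $\CM{n}[f(e)]$; the ACC/Hopfian argument from the original proof forces $\CM{n}[f(e)]$ to have strictly changed iso type between the two recorded even stages, so $\CS{n+1}[e]$ and $\CS{n+1}[i]$ end up non-iso. At odd stages, copy $W_e$ into $W_i$. Since $\CS{n+1}$ has ACC, the construction must stabilize at a finite stage, delivering the required diagonalization.

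The main obstacle is the structural adjustment: verifying that $I_n$ together with $\tau$ gives a faithful $\CS{}$-encoding of $\CM{}$-presentations that both preserves and reflects isomorphism. Once that bookkeeping is in place, every remaining step---stage-by-stage construction, use of ACC and Hopfianity to enforce strict iso-type changes, decidability of the $\CM{n}$-isomorphism problem---transfers essentially verbatim from Theorem \ref{acc_implies_strictly_increasing}.
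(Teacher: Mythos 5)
Your proof is correct and follows essentially the same route as the paper: the paper likewise reduces the statement to $\iCS{n+1}\not\leq_c\iCM{n}$ and runs the diagonalization of Theorem \ref{acc_implies_strictly_increasing}, with the modification that the extra generator is turned into an identity element at the initial stages. Your explicit bookkeeping (the set $I_n$, the translation $\tau$, and the observation that semigroup and monoid isomorphism coincide for monoids) just spells out what the paper leaves implicit.
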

\begin{proof}
    Note that from $\langle X_{n+1}\mid \emptyset \rangle_\CS{}$ one can obtain, through quotienting, an algebra isomorphic to $\langle X_{n}\mid \emptyset \rangle_\CM{}$ (by turning the extra generator into the identity element). Therefore, one can show non-reducibility by the same idea that is used for showing non-reducibility in Theorem \ref{acc_implies_strictly_increasing}. The only modification in the construction is that at stages $0$ and $1$, instead of adding a collapse of the extra generator, equations that make the extra generator the identity element are added.
\end{proof}

\begin{proposition}
    Let $\V{}$ be any variety. Then $\iV{n} \le_c \iV{\fg}$ for any $n$. If $\V{\fg}$ satisfies ACC and $\IP(\V{})$ is decidable, then the inequality is strict.
\end{proposition}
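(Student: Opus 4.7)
The plan for the reduction $\iV{n} \leq_c \iV{\fg}$ is to send an index $e$ to the pair $\langle n, e\rangle$. By the definitions in Section \ref{sec:ce_pres}, $\V{\fg}[\langle n,e\rangle] = \langle X_n \mid W_e\rangle_{\V{}} = \V{n}[e]$, so this map preserves isomorphism exactly in both directions.

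For the strict inequality, under ACC of $\V{\fg}$ and decidability of $\IP(\V{})$, I would adapt the diagonalization of Theorem \ref{acc_implies_strictly_increasing} essentially verbatim, exploiting that $\iV{\fg}$ allows one to work with $(n+1)$-generated presentations even when the target is $\iV{n}$. Suppose toward contradiction that $g$ is a computable reduction from $\iV{\fg}$ to $\iV{n}$, and use the recursion theorem to fix two indices of the form $\langle n+1, a\rangle, \langle n+1, b\rangle$ whose c.e.\ relation sets $W_a, W_b$ on $X_{n+1}$ we will construct, starting empty. Alternate: at an \emph{even} stage, where the two $\V{\fg}$-presentations currently look isomorphic, wait (by decidability of $\IP(\V{})$) until $\V{n}[g(\langle n+1, a\rangle)]$ and $\V{n}[g(\langle n+1, b\rangle)]$ look isomorphic, then enumerate $x_n = x_{n-1}$ into $W_a$ (at stage $0$) or copy the current identities from $W_{g(\langle n+1, a\rangle)}$, viewed as identities on $X_{n+1}$, into $W_a$ (at later even stages); at an \emph{odd} stage, where the $\V{\fg}$-presentations look non-isomorphic, wait until $g$'s images look non-isomorphic and then copy $W_a$ into $W_b$ to re-synchronize.

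Correctness rests on two points. First, ACC for $\V{\fg}$ restricts to $\V{n}$, so by Lemma \ref{acc_isom_type} the isomorphism types of the $\V{n}$-presentations produced by $g$ must stabilize at some finite stage, at which the construction hangs inside either an even or an odd stage and produces a pair that $g$ classifies incorrectly. Second---the step I expect to require the most care---one must verify that at each even stage $2k$ the copied identities really do move the $\V{\fg}$-side of $a$ to a new isomorphism class, so that it can be distinguished from the side of $b$ (which still agrees with where $a$'s side sat after stage $2(k-1)$). This follows as in Theorem \ref{acc_implies_strictly_increasing}: between $t_{2(k-1)}$ and $t_{2k}$, $\V{n}[g(\langle n+1, a\rangle)]$ passes through a non-isomorphic intermediate (forced by the odd-stage wait) before re-aligning with $\V{n}[g(\langle n+1, b\rangle)]$, so by ACC and the Hopfian property from Observation \ref{acc_hom_open_facts} it sits in a genuinely new isomorphism class, and this change is inherited by $\V{\fg}$'s $a$-side once the new identities are copied over.
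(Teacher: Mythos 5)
Your proof is correct, and the first half coincides with the paper's: the reduction is exactly $e \mapsto \langle n,e\rangle$. For strictness, however, the paper does not re-run any construction: it simply notes that $\iV{n+1} \le_c \iV{\fg}$ (by the same coding map), so a reduction $g$ witnessing $\iV{\fg} \le_c \iV{n}$ would compose to give $\iV{n+1} \le_c \iV{n}$, contradicting the strictly increasing chain of Theorem \ref{acc_implies_strictly_increasing}. Your direct diagonalization is precisely this transitivity argument unfolded: since your two inputs are always of the form $\langle n+1, a\rangle$, $\langle n+1, b\rangle$, the map $e \mapsto g(\langle n+1, e\rangle)$ is itself a computable reduction from $\iV{n+1}$ to $\iV{n}$, and your even/odd-stage construction against $g$ is verbatim the Theorem \ref{acc_implies_strictly_increasing} diagonalization run against this composite (the points you flag---ACC for $\V{\fg}$ passing down to $\V{n}$ and $\V{n+1}$, recoding identities from $X_n$ to $X_{n+1}$, and the ACC/Hopfian argument that the image of the $a$-side occupies a genuinely new isomorphism class between consecutive even stages---are handled there exactly as you describe, so they need not be re-verified). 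What your route buys is self-containedness; what it costs is redundancy: given that Theorem \ref{acc_implies_strictly_increasing} is already established, the paper's two-line reducibility argument obtains the same conclusion, and your replay also quietly inherits the paper's implicit assumptions in that proof (e.g.\ nontriviality of the variety, so that collapsing two generators of the free algebra changes the isomorphism type).
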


\begin{proof}
    The $\leq_c$-part is taken care of by $f: e \mapsto \langle n,e \rangle$. If $\V{\fg}$ has ACC $\IP(\V{})$ is decidable, then by Theorem \ref{acc_implies_strictly_increasing} the $\iV{n}$, $n\geq 1$, form a strictly increasing chain. Therefore, since $\iV{\fg}$ is above each of them, it has to be strictly above.
\end{proof}

In the following Theorem, we derive an upper bound of the complexity of $\iV{\fg}$ from ACC alone.

\begin{theorem}\label{acc_upper_bound}
Let $\V{}$ be a variety where $\V{\fg}$ has ACC. Then $\iV{\fg}$ is reducible to $\eqce$.
\end{theorem}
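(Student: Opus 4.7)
The plan is to define the reduction $f$ by sending each index $\langle k, e\rangle$ to a c.e.\ index for the set
\[
W_{f(\langle k,e\rangle)} := \{\langle j, R\rangle : R \text{ codes a finite set of equations and } \langle X_j \mid R\rangle_{\V{}} \twoheadrightarrow \V{\fg}[\langle k,e\rangle]\},
\]
i.e., the collection of finite presentations that surject onto the target c.e.\ algebra. The conceptual leverage is an asymmetry in complexity: surjections \emph{from} a finite presentation admit a $\Sigma^0_1$ witness, because only finitely many defining equations need to be checked inside the c.e.\ congruence $\approx_{W_e}$, whereas the reverse direction would require a $\Pi^0_2$ verification over all of $W_e$.

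First, I would verify that $f$ is well-defined and uniformly computable. A surjection $\langle X_j \mid R\rangle_{\V{}} \twoheadrightarrow \V{\fg}[\langle k,e\rangle]$ is witnessed by a finite mapping $\psi\colon X_j \to \T(X_k)$ together with terms $t_0, \ldots, t_{k-1} \in \T(X_j)$ preimaging the generators of the target. Since the conditions $\psi^{\prime}(r_1) \approx_{W_e} \psi^{\prime}(r_2)$ for $(r_1,r_2) \in R$ and $\psi^{\prime}(t_i) \approx_{W_e} x_i$ for $i < k$ are finite conjunctions of c.e.\ statements, the set is c.e.\ uniformly in $\langle k,e\rangle$.

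Next, I would establish both directions of the reduction. The easy direction: if $\V{\fg}[\langle k,e\rangle] \cong \V{\fg}[\langle j,i\rangle]$, post-composing with the isomorphism shows that a finite presentation surjects onto one algebra iff it surjects onto the other, hence $W_{f(\langle k,e\rangle)} = W_{f(\langle j,i\rangle)}$. For the substantive direction, assume these c.e.\ sets coincide and set $A := \V{\fg}[\langle k,e\rangle]$, $B := \V{\fg}[\langle j,i\rangle]$. By Observation \ref{acc_hom_open_facts}, ACC supplies finite presentations $\langle X_{k'} \mid S\rangle_{\V{}} \cong A$ and $\langle X_{j'} \mid R\rangle_{\V{}} \cong B$. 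Because an isomorphism is in particular a surjection, $\langle k', S\rangle \in W_{f(\langle k,e\rangle)} = W_{f(\langle j,i\rangle)}$, yielding $\langle X_{k'} \mid S\rangle_{\V{}} \twoheadrightarrow B$ and hence $A \twoheadrightarrow B$; symmetrically $B \twoheadrightarrow A$. Composing gives a surjective self-homomorphism $A \twoheadrightarrow A$, which is an isomorphism by Hopfianity (again via Observation \ref{acc_hom_open_facts}); this forces the component surjection $A \twoheadrightarrow B$ to be injective, hence an isomorphism.

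The main obstacle---and the reason ACC is indispensable---is precisely that asymmetry in the complexity of ``surjects onto.'' Any attempt to encode isomorphism directly, or to encode mutual surjection by testing both directions, would produce a $\Pi^0_2$ predicate and fail to yield a c.e.\ set. The argument exploits ACC twice: first to ensure that every algebra in $\V{\fg}$ admits a finite presentation, so that $W_{f(\cdot)}$ nontrivially witnesses the isomorphism type via the algebra itself; and second to invoke Hopfianity, so that surjections in each direction between the two algebras suffice to conclude that they are isomorphic.
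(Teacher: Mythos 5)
Your proposal is correct, but it takes a genuinely different route from the paper's proof. The paper builds the image c.e.\ sets \emph{dynamically}: besides a worker that puts a private column into $W_{f(p)}$, a worker for each triple $\langle q,r,s\rangle$ waits until the stage-$s$ finite approximations of the two presentations are isomorphic (a $\Sigma^0_1$ test, as in Observation \ref{fp_sigma_1}) and then copies the current content of $W_{f(q)}$ into $W_{f(r)}$; correctness is argued via stabilization of isomorphism types under ACC (Lemma \ref{acc_isom_type}) and the fact that ACC rules out homomorphic openness (Observation \ref{acc_hom_open_facts}), which blocks any chain of copyings between non-isomorphic algebras. You instead exhibit a \emph{static} complete invariant: the set of codes of finite presentations that surject onto the given algebra. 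Your key observation that this set is uniformly c.e.\ is sound --- a surjection from $\langle X_j\mid R\rangle_{\V{}}$ onto $\T(X_k)/\approx_{W_e}$ is equivalent to the existence of a finite generator assignment $\psi$ and terms $t_0,\dots,t_{k-1}$ satisfying finitely many $\Sigma^0_1$ conditions, where checking only the relations in $R$ suffices by the same well-definedness argument as in Observation \ref{fp_sigma_1}, and hitting the $k$ generator classes suffices for surjectivity since the image is a subalgebra. Completeness of the invariant uses ACC twice through Observation \ref{acc_hom_open_facts}: finite presentability puts each algebra's own finite presentation into its set, yielding mutual surjections from set equality, and Hopfianity upgrades mutual surjection to isomorphism (this last step is exactly the paper's ``ACC implies not homomorphically open''). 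What your approach buys is a cleaner, machinery-free proof together with an explicit isomorphism-invariant description of the reduction's image; what the paper's worker construction buys is an argument carried out directly on the enumerations, in the same style as the stage-based diagonalization proofs used elsewhere in that section.
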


\begin{proof}
    In order to work with stages of enumeration, we introduce the notation $\cong_s$ for the isomorphism relation on presentations enumerated up to stage $s$. So $\cong_s$ is on finite presentations and thus $\Sigma^0_1$ (Observation \ref{fp_sigma_1}). Also (via some suitable encoding), we view the c.e.\ sets on which $\eqce$ is defined as consisting of $\nat$-many columns of $\nat$ and we denote the element $m$ in the $i$-th column by $m^{[i]}$.
    
    Using infinitely many workers, we build a c.e.\ set $W_{f(p)}$ from a given code $p = \langle n,e\rangle$ for a presentation $\langle X_n\mid W_e\rangle_\V{}$. 
	\begin{itemize}
		\item Worker 0: copies $\{m^{[p]}\mid m \in \nat\}$ into $W_{f(p)}$.
		\item Worker $\langle q, r, s \rangle$: waits until $\V{\fg}[q] \cong_s \V{\fg}[r]$. If that happens, the worker copies $W_{f(q),s}$ into $W_{f(r)}$, making sure the copying happens after stage $s$.
	\end{itemize}
	
	``$\implies$": Assume $\V{\fg}[q] \cong \V{\fg}[r]$. Let $x \in W_{f(q),s}$. Let $t$ be the stage at which both isomorphism types stabilize. Worker $\langle q, r, \max(s,t)\rangle$ will eventually see $\V{\fg}[q] \cong_{\max(s,t)} \V{\fg}[r]$ and put $x$ in $W_{f(r)}$.
	
	``$\impliedby$": Assume $\V{\fg}[q] \not\cong \V{\fg}[r]$. By Observation \ref{acc_hom_open_facts}, $\V{\fg}$ having ACC implies that $\V{\fg}$ is not homomorphically open. So w.l.o.g.\ assume $\V{\fg}[q] \not \twoheadrightarrow \V{\fg}[r]$. Let $s$ be the stage at which both isomorphism types have stabilized. Take some $n^{[q]}$ that enters $W_{f(q)}$ only after stage $s$. Note that $n^{[q]}$ cannot be directly copied into $W_{f(r)}$. Assume $n^{[q]}$ goes through intermediate sets $W_{f(h_0)},..., W_{f(h_m)}$. This can only happen if there exist stages $s_0 < s_1 <...<s_{m+1}$, all greater than $s$, s.t.\ \[\V{\fg}[q] \cong_{s_0} \V{\fg}[h_0], \V{\fg}[h_0] \cong_{s_1} \V{\fg}[h_1],..., \V{\fg}[h_m] \cong_{s_{m+1}} \V{\fg}[r].\] But also note that obviously for all $i \in \{0,...,m\}$, $\V{\fg}[h_i]$ at stage $s_{i+1}$ is a homomorphic image of $\V{\fg}[h_i]$ at stage $s_i$. Therefore, $\V{\fg}[r]$ at stage $s_{m+1}$ is a homomorphic image of $\V{\fg}[q]$ at stage $s_0$. Also, $s < s_0,s_{m+1}$, so both have stabilized to the actual isomorphism type. This is a contradiction to our assumption of $\V{\fg}[q] \not \twoheadrightarrow \V{\fg}[r]$. So we know that $n^{[q]}$ also cannot be copied indirectly into $W_{f(r)}$, which means $W_{f(q)}$ and $W_{f(r)}$ are different.
\end{proof}

Note that $\eqce$ is a $\Pi^0_2$ equivalence relation, thus $\iV{\fg}$ must be $\Pi^0_2$. In the next theorem, we prove that $\iV{\fg}$ must be $\Delta^0_2$ for classes that have either a decidable word or isomorphism problem. However, being $\Delta^0_2$ does not already imply being below $\eqce$ (see Theorem \ref{no_universal_pi2}). We denote by $\WP(\V{})$ the word problem for finite presentations in a variety $\V{}$.

\begin{theorem}\label{acc_wp_ip_delta2}
Let $\V{}$ be a variety where $\V{\fg}$ has ACC. Then
\begin{enumerate}
    \item $\WP(\V{})$ is decidable $\implies$ $\iV{\fg}$ is $\Delta^0_2$,
	\item $\IP(\V{})$ is decidable $\implies$ $\iV{\fg}$ is $\Delta^0_2$.
\end{enumerate}
\end{theorem}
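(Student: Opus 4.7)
The plan is to show in each case that $\iV{\fg}$ is $\Sigma^0_2$; combined with the $\Pi^0_2$ upper bound from Theorem \ref{acc_upper_bound}, this yields $\Delta^0_2$.

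The key ingredient is a stabilization characterization of isomorphism. By Lemma \ref{acc_isom_type}, for each c.e.\ presentation there is a stage at which the isomorphism type has stabilized, and by Lemma \ref{adding_equations} there is a natural surjection $\langle X_k \mid W_{e,s}\rangle \twoheadrightarrow \langle X_k \mid W_{e,u}\rangle$ for every $u \geq s$. A routine argument---using ACC together with the Hopfian property it provides via Observation \ref{acc_hom_open_facts}---shows that the isomorphism type of $\langle X_k \mid W_e\rangle$ is already attained at stage $s$ if and only if $\langle X_k \mid W_{e,s}\rangle \cong \langle X_k \mid W_{e,u}\rangle$ for every $u \geq s$. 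Consequently, $\V{\fg}[\langle k,e\rangle] \cong \V{\fg}[\langle j,i\rangle]$ is equivalent to the existence of stages $s, t$ satisfying these two stabilization conditions together with the matching condition $\langle X_k \mid W_{e,s}\rangle \cong \langle X_j \mid W_{i,t}\rangle$.

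For part (2), if $\IP(\V{})$ is decidable, then every isomorphism check between finite presentations is uniformly decidable, so the two stabilization clauses become $\Pi^0_1$ and the third clause is $\Delta^0_0$; the whole statement is then $\Sigma^0_2$. For part (1), if $\WP(\V{})$ is decidable, I would instead use that the natural surjection $\langle X_k \mid W_{e,s}\rangle \twoheadrightarrow \langle X_k \mid W_{e,u}\rangle$ is an isomorphism if and only if every equation enumerated into $W_e$ after stage $s$ is already a consequence of $W_{e,s}$, which is decidable using WP. The stabilization clauses remain $\Pi^0_1$, while the matching clause is now only $\Sigma^0_1$ by Observation \ref{fp_sigma_1}; nevertheless, the resulting existential statement $\exists s, t \, (\Pi^0_1 \wedge \Pi^0_1 \wedge \Sigma^0_1)$ is still $\Sigma^0_2$.

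The main point to check carefully is the stabilization equivalence: the nontrivial direction needs the Hopfian conclusion extracted from ACC to rule out ``false stabilization'' of the isomorphism type without stabilization of the congruence. Once that is in place, both parts reduce to standard quantifier bookkeeping.
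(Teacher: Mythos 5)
Your proof is correct, and it takes a noticeably different route from the paper's, especially for part (2). The paper handles (2) directly via the Limit Lemma: the stage-$s$ isomorphism check between the finite presentations $\langle X_k \mid W_{e,s}\rangle$ and $\langle X_j \mid W_{i,s}\rangle$ is computable by decidability of $\IP(\V{})$, and ACC guarantees this computable function converges to the characteristic function of $\iV{\fg}$, giving $\Delta^0_2$ without invoking Theorem \ref{acc_upper_bound} at all; your version instead gets $\Sigma^0_2$ from the stabilization-stage characterization and then needs Theorem \ref{acc_upper_bound} for the $\Pi^0_2$ half, which is a legitimate but less self-contained argument for this case. For part (1) the skeleton is the same ($\Sigma^0_2$ definition plus $\Pi^0_2$ from Theorem \ref{acc_upper_bound}), but the $\Sigma^0_2$ definitions differ: the paper quantifies over a candidate generator-mapping $\varphi$ and expresses well-definedness, injectivity, and surjectivity of its extension through the decidable word problem at late stages, whereas you quantify over stabilization stages $s,t$, certify stabilization by the $\Pi^0_1$ condition that all later enumerated identities are consequences of $W_{e,s}$ (decidable by $\WP(\V{})$), and then use the $\Sigma^0_1$ isomorphism check of Observation \ref{fp_sigma_1} as the matching clause; your quantifier count $\exists s,t\,(\Pi^0_1\wedge\Pi^0_1\wedge\Sigma^0_1)\in\Sigma^0_2$ is right. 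Two small remarks: your appeal to Hopfianity is only needed for one direction of your stated biconditional and is actually dispensable for the theorem itself---in both directions of the overall characterization one can simply choose $s,t$ to be stages where the congruence chains (not merely the isomorphism types) have stabilized, which ACC provides directly, so there is no ``false stabilization'' issue when stabilization is phrased via consequences as in your part (1); and note that Lemma \ref{acc_isom_type} literally speaks of the finite-stage types stabilizing, so the fact that the stabilized type equals that of the full c.e.\ presentation should be (as you implicitly do) drawn from ACC applied to the ascending chain of generated congruences. A pleasant feature of your approach is that it treats (1) and (2) uniformly through one stabilization lemma; the paper's approach buys a cleaner, reduction-free proof of (2) and avoids the abstract-isomorphism-versus-natural-surjection discussion.
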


\begin{proof}

\begin{enumerate}
    \item 
    We only need to give a $\Sigma^0_2$-definition, as Theorem \ref{acc_upper_bound} implies membership in $\Pi^0_2$.
    
    Let $\A, \B \in \V{\fg}$ be given by presentations $\langle X_k \mid W_e\rangle$ and $\langle X_j \mid W_i\rangle$ respectively. Let $\approx_{W_e},\approx_{W_i}$ be the congruence relations generated by $W_e,W_i$ respectively.
    Existence of an isomorphism from $\A$ to $\B$ can be stated as follows: $\A \cong \B \iff \exists \varphi: X_k \to \T(X_j)$, s.t.\ its extension $\varphi^\prime: \T(X_k) \to \T(X_j)$ to all terms (as in the proof of Observation \ref{fp_sigma_1}) is an isomorphism from $\T(X_k)/\approx_{W_e}$ to $\T(X_j)/\approx_{W_i}$.
    
    Asking for the existence of $\varphi$ adds one existential quantifier, so it is sufficient to show that checking whether $\varphi^\prime$ is an isomorphism is a $\Sigma^0_2$ statement. We have to check well-definedness, injectivity, and surjectivity --- it then follows from the definition of $\varphi^\prime$ that it is a homomorphism. We use ACC to state these conditions in terms of the word problem for a finite presentation --- which is decidable by assumption. By $=_{\A,t}$ (and analogously $=_{\B,t}$) we denote the equivalence relation associated to the word problem for $\langle X_k \mid W_{e,t}\rangle$.
    \begin{itemize}
        \item Well-definedness on equivalence classes: $\exists$ stage $t_0$, s.t. $\forall t > t_0$ and $\forall$ terms $s_1,s_2$, if $s_1 =_{\A,t} s_2$ then $\varphi(s_1) =_{\B,t} \varphi(s_2)$.
        \item Injectivity: $\exists$ stage $t_0$, s.t. $\forall t > t_0$ and $\forall$ terms $s_1,s_2$, if $\varphi(s_1) =_{\B,t} \varphi(s_2)$ then $s_1 =_{\A,t} s_2$.
        \item Surjectivity: for every generator $x \in \{x_0,...,x_{j-1}\}$, $\exists$ term $v$ and $\exists$ stage $t$, s.t.\ $v$ is generated by $\varphi(x_0),...,\varphi(x_{k-1})$ and $v =_{\B,t} x$.
    \end{itemize}
    
    \item 
	By the Limit Lemma, a set is $\Delta^0_2$ if and only if its characteristic function is the limit of a computable function. Consider the function \[
	f(\langle k,e\rangle,\langle l,i \rangle, s) = \begin{cases}
		1, &\text{if }\langle X_k \mid  W_{e,s}\rangle_{\V{n}} \cong \langle X_l \mid  W_{i,s}\rangle_{\V{n}}\\
		0, &\text{otherwise}.
	\end{cases}
	\]
	By decidability of $\IP(\V{})$, $f$ is computable. By ACC, the isomorphism type of c.e.\ presentations in $\V{\fg}$ eventually stabilizes. Therefore, the characteristic function
	\[ \chi_{\iV{\fg}}(\langle k,e\rangle,\langle l,i \rangle) = \lim_{s \to \infty} f(\langle k,e\rangle,\langle l,i \rangle, s) \]
    of $\iV{n}$ is the limit of the computable function $f$.
		\end{enumerate}
\end{proof}

\begin{corollary}
    If $\V{\fg}$ not only has ACC but also either $\WP(\V{})$ or $\IP(\V{})$ is decidable, then $\V{\fg} <_c \eqce$. 
\end{corollary}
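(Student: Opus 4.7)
The plan is to combine the two theorems immediately preceding the corollary. Theorem \ref{acc_upper_bound} already gives $\iV{\fg} \leq_c \eqce$ from the ACC hypothesis alone, so no new argument is needed for the reduction itself. The remaining task is strictness: I must rule out $\eqce \leq_c \iV{\fg}$, and this is where the extra hypothesis (decidability of $\WP(\V{})$ or $\IP(\V{})$) is used, via Theorem \ref{acc_wp_ip_delta2}.

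For the strictness, I would argue by a level-counting in the arithmetical hierarchy. By Theorem \ref{acc_wp_ip_delta2}, either decidability assumption forces $\iV{\fg}$, viewed as a subset of $\nat \times \nat$, to be $\Delta^0_2$. Suppose for contradiction that some computable $h$ witnesses $\eqce \leq_c \iV{\fg}$. Then the map $(e,i) \mapsto (h(e),h(i))$ is a computable (hence $m$-)reduction of the set $\eqce \subseteq \nat \times \nat$ to the $\Delta^0_2$ set $\iV{\fg} \subseteq \nat\times\nat$. Since $\Delta^0_2$ is closed under $m$-reducibility, this would make $\eqce$ itself $\Delta^0_2$. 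However, as the paper recalls, $\eqce$ is $\Pi^0_2$-complete under $m$-reducibility, hence properly $\Pi^0_2$, and in particular not $\Sigma^0_2$ and not $\Delta^0_2$---a contradiction.

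There is essentially no technical obstacle here, since the heavy lifting has been done in Theorem \ref{acc_upper_bound} (for the upper bound) and Theorem \ref{acc_wp_ip_delta2} (for the $\Delta^0_2$ classification). The only subtle point worth stating explicitly is that computable reducibility of equivalence relations specializes to $m$-reducibility of their graphs viewed as subsets of $\nat^2$, so standard closure properties of the $\Delta^0_2$ class transfer through the reduction. Note also that Theorem \ref{no_universal_pi2} is not strong enough here on its own, because it does not rule out that $\eqce$ itself reduces to a given $\Pi^0_2$ relation; we really do need the sharper $\Delta^0_2$ information supplied by Theorem \ref{acc_wp_ip_delta2}.
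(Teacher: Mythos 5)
Your proposal is correct and matches the paper's intended argument: the paper derives the corollary exactly from Theorem \ref{acc_upper_bound} (giving $\iV{\fg} \leq_c \eqce$) and Theorem \ref{acc_wp_ip_delta2} (giving the $\Delta^0_2$ bound), with strictness following just as you say, since a computable reduction $\eqce \leq_c \iV{\fg}$ would $m$-reduce the properly $\Pi^0_2$ relation $\eqce$ to a $\Delta^0_2$ set. Your explicit remarks on passing from computable reducibility to $m$-reducibility of the graphs, and on why Theorem \ref{no_universal_pi2} alone would not suffice, are exactly the right points to make.
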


This follows from Theorem \ref{acc_upper_bound} and Theorem \ref{acc_wp_ip_delta2}, and in particular applies to $\CS{}$ (by Theorem \ref{isomorphism_problem_decidable}). With this, we are now ready to present a picture of the isomorphism relations we have discussed. Figure \ref{fig:below=ce} includes the results below $\eqce$ that we have shown so far and a classification of $\UF{2}{1}$, which will be shown in Section \ref{sec:above_eqce}.
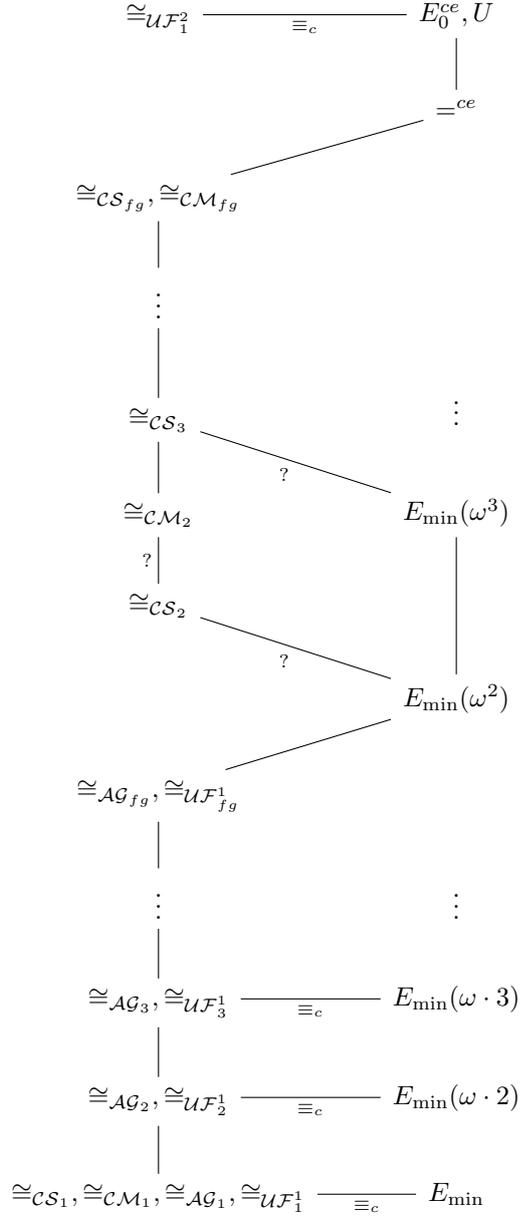
\begin{figure}
\begin{center}
    \begin{tikzcd}
        \iUF{2}{1}& \ezero,U \arrow[dash]{l}{\equiv_c}\\
        & \eqce \arrow[dash]{u}\\
        \iCS{fg}, \iCM{fg} \arrow[dash]{ur}\\
        \vdots\arrow[dash]{u}\\
        \iCS{3} \arrow[dash]{u} & \vdots\\
        \iCM{2} \arrow[dash]{u} & \emin(\omega^3) \arrow[dash]{ul}{?}\\
        \iCS{2} \arrow[dash]{u}{?}\\
        & \emin(\omega^2) \arrow[dash,uu] \arrow[dash,ul,"?"]\\
        \iAG{fg}, \iUF{1}{fg} \arrow[dash]{ur}\\
        \vdots \arrow[dash]{u} & \vdots\\
        \iAG{3}, \iUF{1}{3} \arrow[dash]{u} & \emin(\omega \cdot 3) \arrow[dash]{l}{\equiv_c}\\
        \iAG{2}, \iUF{1}{2} \arrow[dash]{u} & \emin(\omega \cdot 2) \arrow[dash]{l}{\equiv_c}\\
        \iCS{1}, \iCM{1}, \iAG{1}, \iUF{1}{1} \arrow[dash]{u} & \emin \arrow[dash]{l}{\equiv_c}
    \end{tikzcd}
    
\end{center}
\caption{The isomorphism relations investigated in this paper, compared to well-understood equivalence relations. A question mark indicates that we do not know whether or not the reducibility is strict. Apart from strict reducibilities, we omitted non-reducibility results.}
\label{fig:below=ce}
\end{figure}

In the remaining part of this section, we give two general results about another important benchmark relation, $\emax$. The first one states that its degree cannot contain isomorphism relations on finitely generated algebras with a decidable isomorphism problem. The second one states that $\emax$ is not reducible to isomorphism relations on classes with ACC and a decidable isomorphism problem.

\begin{proposition}\label{emax_does_not reduce_acc_ip}
    Let $\V{}$ be a variety where $\V{\fg}$ has ACC and $\IP(\V{})$ is decidable. Then $\emax \not\leq_c \iV{\fg}$.
\end{proposition}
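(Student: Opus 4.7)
The plan is to diagonalize against any putative computable reduction $f$ using the recursion theorem, exploiting the following asymmetry: under $\emax$ we can freely move between ``$\emax$-equivalent'' and ``$\emax$-non-equivalent'' states by enumerating a single new element into one or both of two controlled c.e.\ sets, whereas under ACC the isomorphism types of the $\V{\fg}$-presentations we target must eventually stabilize.

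Concretely, suppose $f$ is such a reduction. By the recursion theorem I would fix two indices $p,q$ (coding presentations on, say, $X_1$) whose c.e.\ sets $W_p,W_q$ I control, and build them in stages starting with $W_p = W_q = \emptyset$ (so $\emax$-equivalent). I would then alternate between two kinds of stages. At \emph{odd stages} (the currently common $\emax$-max $M$ of $W_p$ and $W_q$ witnesses equivalence), wait until some stage $s$ at which $\V{\fg}[f(p)] \cong_s \V{\fg}[f(q)]$---this is decidable by hypothesis on $\IP(\V{})$---and then enumerate a fresh element $M' > M$ into $W_p$ only, forcing $\max W_p \neq \max W_q$. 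At \emph{even stages} ($W_p$ and $W_q$ now have different maxima), wait until some stage at which $\V{\fg}[f(p)] \not\cong_s \V{\fg}[f(q)]$, then enumerate the same $M'$ into $W_q$ to restore common max.

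The main point is that this alternation cannot continue indefinitely. By Lemma \ref{acc_isom_type}, the isomorphism types of $\V{\fg}[f(p)]$ and $\V{\fg}[f(q)]$ each stabilize at some finite stage, so the test $\cong_s$ eventually returns a fixed answer. Hence exactly one of the two types of wait must, from some point on, hang forever. If we get stuck at an odd stage then in the limit $W_p, W_q$ are $\emax$-equivalent (same finite max, or both empty) while $\V{\fg}[f(p)] \not\cong \V{\fg}[f(q)]$; if we get stuck at an even stage, $W_p, W_q$ have different maxima while $\V{\fg}[f(p)] \cong \V{\fg}[f(q)]$. Either outcome contradicts $f$ being a reduction.

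The main obstacle to verify carefully is the termination argument: one must observe that the decidable stage-isomorphism $\cong_s$ genuinely tracks the true isomorphism relation in the limit, which is where ACC is essential (without it, $\cong_s$ could oscillate forever even after the ``true'' answer is fixed). Decidability of $\IP(\V{})$ is needed only to ensure that the waiting loops are themselves effective, so that the recursion-theoretic construction is valid.
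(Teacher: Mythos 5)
Your proposal is correct and follows essentially the same route as the paper: fix two indices via the recursion theorem, alternate their $\emax$-status by enumerating fresh maxima, and use decidability of $\IP(\V{})$ to wait until the finite-stage presentations of their images look isomorphic (respectively non-isomorphic) before switching. The only inessential difference is how the contradiction is closed: you invoke Lemma \ref{acc_isom_type} to argue the stage-wise verdict stabilizes, so the construction gets permanently stuck and diagonalizes, whereas the paper observes that an unending alternation would produce a strictly ascending chain of congruences in the image presentation, contradicting ACC directly---two equivalent ways of using ACC, and the paper itself uses your version in its analogous arguments (e.g.\ Theorem \ref{acc_implies_strictly_increasing}).
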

\begin{proof}
Assume towards a contradiction that there exists such a reduction $f$ from $\emax$ to $\iV{\fg}$. Consider the c.e.\ sets $W_e$ and $W_i$, constructed as follows (with $e$,$i$ fixed by the Recursion Theorem): 
    \begin{itemize}
        \item We start with both $W_e$ and $W_i$ empty. 
        \item Stage $2k$. At this point, $W_e$ and $W_i$ are in the same $\emax$-class. We wait until we see that the algebras presented by the images under $f$ of $W_e$ and $W_i$ are isomorphic. We can decide this by our assumption that $\IP(\V{})$ is decidable. Once that happens, we put $k$ into $W_e$. If that never happens, $f$ is not a reduction.
        \item Stage $2k + 1$. At this point, $W_e$ and $W_i$ are in different $\emax$-classes. We wait until we see that the presentations are non-isomorphic (again, this is decidable by our assumption that $\IP(\V{})$ is decidable). Once that happens, put $k$ into $W_i$. If that never happens, $f$ is not a reduction.  
        \end{itemize}
    
    Now, for each $k$, the image of $W_e$ after stage $2(k+1)$ is a presentation with a congruence relation that is strictly bigger than the congruence relation of the image of $W_e$ after stage $2k$. This implies the existence of a strictly ascending sequence of congruence relations, a contradiction to ACC. 
\end{proof}

From this, we immediately also get that for all $n \geq 1$, $\emax \not\leq_c \iV{n}$.

\begin{proposition}\label{nothing_in_emax_deg}
        Let $\V{}$ be a variety of finite type and with $\IP(\V{})$ decidable. Then neither $\V{\fg}$ nor $\V{n}$, for $n \geq 1$, can be in the degree of $\emax$.
\end{proposition}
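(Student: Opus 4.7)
The plan is to split into cases based on whether $\V{\fg}$ (respectively $\V{n}$) satisfies ACC, and to exploit structural features of $\emax$ in each case. In the ACC case, Proposition \ref{emax_does_not reduce_acc_ip} directly yields $\emax\not\leq_c \iV{\fg}$ (with the analogous statement for $\iV{n}$ following by the remark after that proposition), immediately giving $\iV{\fg}\not\equiv_c \emax$ and $\iV{n}\not\equiv_c \emax$.

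For the non-ACC case I would first establish the structural observation that the trivial-algebra class $[\mathbf{1}]$ in $\iV{n}$ (and likewise in $\iV{\fg}$) is $\Sigma^0_1$: finite type means there are only finitely many constants to identify alongside the $n$ generators, and decidable $\IP(\V{})$ allows us to test, at each finite stage of enumeration, whether the current finite presentation $\langle X_n\mid W_{e,s}\rangle_{\V{}}$ is already isomorphic to $\mathbf{1}$; since a trivial algebra has only the trivial algebra as a quotient, once trivial at a stage it remains trivial. A complementary computation shows that no $\emax$-equivalence class is $\Sigma^0_1$: the empty class is $\Pi^0_1$, each max-$k$ class is properly $\Sigma^0_2$, and the infinite class is $\Pi^0_2$-complete. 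Using the Monotonicity Lemma (Lemma~\ref{monotonicity_lemma}) applied to the chain $W_e\subseteq \omega = W_{e_\omega}$, I would then argue that any reduction $f\colon \iV{n}\leq_c \emax$ must map $[\mathbf{1}]$ to the infinite class of $\emax$---otherwise $W_{f(e_\omega)}$ would be finite, bounding the max of every $W_{f(e)}$ and confining $f$ to only finitely many $\emax$-classes, contradicting injectivity of $f$ on classes (since $\iV{n}$ has infinitely many classes).

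With this pinning of $f([\mathbf{1}])$ in place, the plan is to diagonalize via the Recursion Theorem: construct a c.e.\ set $W_e$ whose image under $f$ is forced by monotonicity into the infinite $\emax$-class once a sufficient approximation is observed, while simultaneously arranging that $\V{n}[e]$ is not trivial; decidability of $\IP$ lets us track the current approximation's isomorphism type stage by stage, and the finitely many generators and constants ensure that ``collapsing to $\mathbf{1}$'' is a well-controlled finitary event that can be delayed or triggered at will. The diagonalization then produces an $e$ for which $f(e)$'s $\emax$-class disagrees with $e$'s $\iV{n}$-class, contradicting the assumption that $f$ is a reduction. An entirely analogous construction handles $\iV{\fg}$.

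The main obstacle I expect is executing this last diagonalization rigorously in the absence of ACC. Without the stabilization of isomorphism types that ACC provides, the alternating strategy of Proposition \ref{emax_does_not reduce_acc_ip} does not apply directly; the key technical move will be to replace ACC-based stabilization by a finite-type-based commitment---deploying identities that force the collapse of the finitely many generators toward $\mathbf{1}$ irreversibly (any further identities can only preserve this collapse), which can then be pitted against $f$'s forced monotone growth of $W_{f(e)}$.
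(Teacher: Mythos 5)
Your ACC case coincides with the paper's: Proposition \ref{emax_does_not reduce_acc_ip} rules out $\emax \leq_c \iV{\fg}$ (and $\iV{n}$), so equivalence fails there. Your ``pinning'' step in the non-ACC case is also essentially sound, modulo one omission: Lemma \ref{monotonicity_lemma} only applies to functions well-defined on c.e.\ sets, so you must first replace $f$ by its composition with the map that fills each $W_{f(e)}$ downward below its maximum (this is harmless for $\emax$ and is exactly what the paper does). After that, the counting argument correctly forces $f([\mathbf{1}])$ into the infinite $\emax$-class, using that $\iV{n}$ has infinitely many classes because $\emax \leq_c \iV{n}$. (Your side remark about no $\emax$-class being $\Sigma^0_1$ plays no role: preimages under computable maps can drop in complexity, so it yields no contradiction, and you rightly do not rely on it.)

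The genuine gap is the concluding diagonalization, and it is not merely a matter of ``executing it rigorously''---the mechanism is absent. Monotonicity can make $W_{f(e)}$ infinite only if $W_e$ contains subsets whose $f$-images have unbounded maxima, and since $f$ respects isomorphism, the $\emax$-class of $W_{f(F)}$ for $F \subseteq W_e$ depends only on the isomorphism type of $\langle X_n \mid F\rangle_{\V{}}$. The only isomorphism type you have placed in the infinite class is $\mathbf{1}$, so your ``irreversible collapse toward $\mathbf{1}$'' is self-defeating: if some subset of $W_e$ already presents $\mathbf{1}$, then $\V{n}[e] \cong \mathbf{1}$ and there is no disagreement with $f$; if none does, monotonicity forces nothing. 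Compounding this, ``$W_{f(e)}$ is infinite'' is a $\Pi^0_2$ outcome, so there is no finite stage at which it is ``observed'' and can trigger a response in a recursion-theorem construction. Crucially, your case 2 never actually uses the failure of ACC. The paper's key move is exactly that: take an infinite strictly ascending chain of congruences, let $R$ be the congruence generated by its union; $R$ is not finitely generated, hence---because the full congruence \emph{is} finitely generated in finite type---$R$ is not full and presents a \emph{nontrivial} algebra; yet by monotonicity the image of a presentation of $\T(X_n)/R$ contains the images along the chain, which realize infinitely many distinct $\emax$-classes and so union to an infinite set. That gives a second, nontrivial algebra landing in the infinite $\emax$-class alongside $\mathbf{1}$, a static contradiction requiring no diagonalization. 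Without producing such a congruence, your construction cannot separate ``$W_{f(e)}$ infinite'' from ``$\V{n}[e]$ trivial.''
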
 

\begin{proof}

    Assume there exists a variety $\V{}$ of finite type and with $\IP(\V{})$ decidable, s.t.\ $\iV{n}$, for some $n$, or $\iV{\fg}$ is $c$-equivalent to $\emax$. We do the case of $\iV{\fg}$, the other case follows similarly. First, notice that $\V{\fg}$ having ACC would contradict Proposition \ref{emax_does_not reduce_acc_ip}. So we can assume that there exists an algebra in $\V{\fg}$ whose congruence lattice has an infinite strictly ascending chain of congruences.
    Let $R$ be the congruence relation generated by the union of this sequence. Then $R$ cannot be finitely generated, otherwise one of the quotients would contain the generating equations. So in particular, $R$ cannot be the full congruence because, for varieties of finite type, the full congruence is finitely generated. Moreover, $R$ and the full congruence present non-isomorphic algebras. Now, consider the reduction $f$ from $\iV{\fg}$ to $\emax$. If we compose it with a function from $2^\omega$ to $2^\omega$ that fills the sets up below the maximal element, we get another reduction that is well-defined on c.e.\ sets. So w.l.o.g.\ assume $f$ is well-defined on c.e.\ sets. By the monotonicity lemma (Lemma \ref{monotonicity_lemma}), $f$ needs to map both $R$ and the full congruence to a superset of all of $W_{f(g(\emptyset))} \neq W_{f(g(\{0\}))} \neq W_{f(g(\{0,1\}))} \neq ...$. Therefore, both are mapped to an infinite set. This, however, means that they are mapped to $\emax$-equivalent sets, a contradiction.
\end{proof}

Furthermore, note that if an isomorphism relation is above $\emin$ (which is the case for all of our example relations), then it cannot be reducible to $\emax$. If it were, then $\emin$ would be reducible to $\emax$, a contradiction.

\section{Above \texorpdfstring{$\eqce$}{=ce} and \texorpdfstring{$\Sigma^0_3$}{Sigma 0 3}-completeness.}\label{sec:above_eqce}

This section collects our results above $\eqce$. We show that $\UF{2}{1}$ is $\Sigma^0_3$-complete. This is another example of a $\Sigma^0_3$-complete class, besides $6$-generated groups \cite{andrews2024two}. Then, we use homomorphic openness to derive the general results that no isomorphism relation on a homomorphically open class can be below $\eqce$.

We introduce a $\Sigma^0_3$-complete equivalence relation $U$, from \cite{andrews2025analogues}, which we then reduce to $\iUF{2}{1}$.

\begin{definition}\label{U_rel}
    For a c.e.\ set $W_e \subseteq \Z$ (via some computable encoding) and $x\in \Z$, we define $W_e+x = \{w+x\mid  w \in W_e\}$ to be the \emph{shift} of $W_e$ by $x$. 
    Define $U$ to be the equivalence relation on $\nat$ such that $eUi$ if and only if there is some $x\in \Z$ such that $W_e + x = W_i$.
\end{definition}

\begin{theorem}[\cite{andrews2025analogues}]
    $U$ is a $\Sigma^0_3$-complete equivalence relation.
\end{theorem}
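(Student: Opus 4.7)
The plan is to verify $U \in \Sigma^0_3$ by unfolding its definition and then establish $\Sigma^0_3$-hardness under computable reducibility by reducing from a known $\Sigma^0_3$-complete equivalence relation, most naturally $\ezero$.

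For the upper bound, $e U i$ is by definition $\exists x \in \Z\,(W_e + x = W_i)$. The inner equality unfolds as $\forall n \in \Z\,[(n \in W_e \rightarrow n + x \in W_i) \wedge (n + x \in W_i \rightarrow n \in W_e)]$. Each implication is uniformly $\Pi^0_2$ in $n, x$ (both $n \in W_e$ and $n+x \in W_i$ are $\Sigma^0_1$, and an implication between $\Sigma^0_1$ statements admits a $\forall \exists$ rewriting), so the bracketed formula is $\Pi^0_2$ uniformly in $n, x$. Universal quantification over $n$ preserves $\Pi^0_2$, and the outer existential over $x \in \Z$ yields $\Sigma^0_3$.

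For the lower bound, I would reduce from $\ezero$, whose $\Sigma^0_3$-completeness under computable reducibility is established in the Ianovski--Miller--Ng--Nies line of work. Concretely, the goal is to construct a computable $f: \omega \to \omega$ such that, for all $e, i$,
\[
W_e =^* W_i \iff \exists x \in \Z\,(W_{f(e)} + x = W_{f(i)}).
\]
The design idea is to build $W_{f(e)} \subseteq \Z$ from two ingredients: a \emph{rigid backbone} that records, up to a single integer shift, the asymptotic content of $W_e$ (so that genuinely different tails cannot be aligned by any shift), together with a \emph{shifting anchor} whose position drifts every time a new element is enumerated into $W_e$. In this way, any finite symmetric difference between $W_e$ and $W_i$ translates into a single cumulative offset between $W_{f(e)}$ and $W_{f(i)}$, while a non-$\ezero$ discrepancy produces two $\Z$-subsets whose backbones are irreconcilable.

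The main obstacle is the rigidity mismatch between the two relations: $\ezero$ is permissive (any finite modification is allowed), whereas $U$ demands exact equality after translation. The construction must simultaneously (i) absorb arbitrary finite changes into a single shift (the $\Leftarrow$ direction) and (ii) preclude spurious self-similarities that would allow shifts to identify non-$\ezero$-equivalent inputs (the $\Rightarrow$ direction). The standard technique is to embed into the backbone a sequence of unique, progressively more distinctive markers (e.g., runs of increasing lengths or rare blocks whose spacings encode stage-by-stage approximations of $W_e$), forcing any successful shift to be uniquely determined by the first marker and then to align the entire tail correctly. A recursion-theoretic verification then confirms that the two directions of the equivalence hold.
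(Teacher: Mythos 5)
There is a genuine gap, and it is worth noting first that the paper does not prove this statement at all: it is imported from \cite{andrews2025analogues}, so there is no in-paper argument to compare against. Judged on its own terms, your upper-bound computation is fine ($e\,U\,i$ is an existential over $x\in\Z$ of a $\Pi^0_2$ matrix), and reducing from $\ezero$ is a legitimate strategy, since the $\Sigma^0_3$-completeness of $\ezero$ under computable reducibility is indeed known. But the hardness direction, which is the entire content of the theorem, is never actually carried out: you state a design intention (``rigid backbone'' plus ``shifting anchor'' plus ``markers'') and then assert that ``a recursion-theoretic verification then confirms that the two directions of the equivalence hold.'' That verification is precisely where the difficulty lives, and the sketch as given does not survive scrutiny. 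The central obstruction is that $W_{f(e)}$ must be produced by a monotone enumeration, so nothing placed into it at an early stage can ever be revised, while $U$ demands \emph{exact} equality of the final sets after a single translation. Your anchor ``drifts every time a new element is enumerated into $W_e$,'' but for infinite $W_e$ it then drifts infinitely often, has no limit position, and every abandoned anchor position remains in $W_{f(e)}$ as permanent debris; the construction gives no mechanism forcing this debris in $W_{f(e)}$ and $W_{f(i)}$ to line up under one shift even when $W_e=W_i$, let alone when they differ finitely but are enumerated in wildly different orders and at different speeds.

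The quantitative heuristic is also off: it is not true that ``any finite symmetric difference between $W_e$ and $W_i$ translates into a single cumulative offset'' in any sense that helps. If the offset is meant to be a count of enumerated elements, then the limiting count difference is $|W_e\setminus W_i|-|W_i\setminus W_e|$, which can be $0$ for distinct sets and, worse, can stabilize to a constant for sets with \emph{infinite} symmetric difference (evens versus odds), so a counting-based alignment threatens the forward direction exactly where you need rigidity; if instead the backbone genuinely records the tail of $W_e$, then finite discrepancies and enumeration-order artifacts contaminate the backbone's initial segment, and exactness after a shift fails in the backward direction. Resolving this tension (typically by marker constructions in which stale markers are absorbed into homogeneous blocks so that the junk becomes shift-invariant, or by reducing a $\Sigma^0_3$-complete set/relation chosen so that the needed approximations are monotone) is the real work, and it is absent. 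As it stands, the proposal establishes only $U\in\Sigma^0_3$ and a plausible but unexecuted plan for hardness, so it does not constitute a proof of the theorem.
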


\begin{theorem}\label{thm:uf21-complete}
	$\iUF{2}{1}$ is a $\Sigma^0_3$-complete equivalence relation.
\end{theorem}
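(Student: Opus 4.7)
My plan is to reduce the $\Sigma^0_3$-complete relation $U$ of Definition~\ref{U_rel} to $\iUF{2}{1}$; together with Observation~\ref{fg_sigma_3} (which gives $\Sigma^0_3$ as an upper bound), this will yield $\Sigma^0_3$-completeness. For each $e$, I will construct (uniformly in $e$) a c.e.\ presentation of an algebra $\mathcal{A}(W_e)$ in $\UF{2}{1}$, with sole generator $x_0$ and unary function symbols $f,g$, such that $\mathcal{A}(W_e) \cong \mathcal{A}(W_{e'})$ iff there is $k \in \Z$ with $W_e + k = W_{e'}$.

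The algebra $\mathcal{A}(W)$ is built around a bi-infinite chain $(c_n)_{n \in \Z}$, with $c_0 = x_0$, on which $f$ realises the forward shift $f(c_n) = c_{n+1}$. To each $c_n$ I attach a \emph{decoration root} $d_n := g(c_n)$ and enforce $f(d_n) = c_{n-1}$, so that $f \circ g$ realises the backward shift on the chain; in particular the whole chain is reached from $x_0$ via the terms $f^n(x_0)$ for $n \ge 0$ and $(fg)^n(x_0)$ for $n \ge 0$. The set $W$ is then encoded by imposing, for each $n \in W$, the \emph{marker relation} $g(d_n) = d_n$ (equivalently $g^2(c_n) = g(c_n)$); for $n \notin W$ no further identification is imposed, so $d_n$ becomes the root of a free $\{f,g\}$-subtree in $\mathcal{A}(W)$. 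Concretely, the c.e.\ presentation consists of the background relations $fgf(f^j(x_0)) = f^j(x_0)$ and $f^2 g((fg)^j(x_0)) = (fg)^j(x_0)$ for $j \ge 0$, together with the marker relations enumerated as elements enter $W_e$. This set of relations is c.e.\ uniformly in $e$.

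The forward direction of the characterization is immediate: a shift by $k$ with $W' = W + k$ induces an isomorphism $\mathcal{A}(W) \to \mathcal{A}(W')$ via $c_n \mapsto c_{n+k}$, $d_n \mapsto d_{n+k}$, extended by any isomorphism of the free subtrees at matching unmarked positions. For the converse, let $\phi \colon \mathcal{A}(W) \to \mathcal{A}(W')$ be an isomorphism. Since the image of a single generator under an isomorphism must again generate, and since the only elements of $\mathcal{A}(W')$ capable of generating it are its chain elements and decoration roots (a subtree element cannot recover the chain), $\phi(x_0)$ equals some $c_m'$ or some $d_m'$. The latter cases are ruled out: if $\phi(x_0) = d_m'$ with $m \in W'$ then $\phi(d_0) = g(d_m') = d_m' = \phi(c_0)$, violating injectivity; if $m \notin W'$ then $\phi(c_n) = f^n(d_m') = c_{m+n-2}'$ for $n \ge 1$, while $\phi$ sends $c_0$ to $d_m'$ and all of $c_{-n}$ (for $n\ge 1$) and the subtree elements of $\mathcal{A}(W)$ into the subtree at $d_m'$ in $\mathcal{A}(W')$, so the chain element $c_{m-2}'$ is missed and surjectivity fails. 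Hence $\phi(x_0) = c_m'$; commutation with $f$ forces $\phi(c_n) = c_{m+n}'$, hence $\phi(d_n) = d_{m+n}'$, and preservation of the marker property $g(d_n) = d_n$ yields $W' = W + m$.

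The main obstacle I foresee is the surjectivity argument in the subcase $\phi(x_0) = d_m'$ with $m \notin W'$: one must verify that \emph{no} element of $\mathcal{A}(W)$---including, crucially, the elements of the free subtrees at unmarked positions, which a priori could be mapped almost anywhere---can be sent to the missing chain element $c_{m-2}'$. This in turn demands a clean type classification of $\mathcal{A}(W)$ into chain, decoration roots, and subtree elements, together with a check that subtree elements always map to subtree elements under $\phi$ in this subcase. Once this is secured, the computability of the reduction $e \mapsto \mathcal{A}(W_e)$ is immediate, and combined with Observation~\ref{fg_sigma_3} the theorem follows.
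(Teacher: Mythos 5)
Your overall strategy---reducing $U$ to $\iUF{2}{1}$ via a bi-infinite chain on which $f$ shifts forward and $f\circ g$ shifts backward, with membership in $W$ encoded by a marker at each chain position---is essentially the paper's construction (the paper uses finite gadgets $b_i',b_i''$ and $c_i',c_i''$ that get collapsed when $i$ enters $W_e$, rather than your free-tree-versus-loop markers; that difference is cosmetic). The gap is exactly where you flag it: the subcase $\phi(x_0)=d_m'$ with $m\notin W'$. Your surjectivity argument does not work as stated: the element $c_{m-2}'$ is \emph{not} missed by the subalgebra generated by $d_m'$, since $f(d_m')=c_{m-1}'$, $g(c_{m-1}')=d_{m-1}'$, $f(d_{m-1}')=c_{m-2}'$, so $c_{m-2}'=(f\circ g\circ f)(d_m')$ lies in the image of any homomorphism sending $x_0$ to $d_m'$. (Relatedly, your claim that all subtree elements of $\mathcal{A}(W)$ land in the subtree at $d_m'$ is false for subtrees at positions $n\geq 1$, which would land at positions $m+n-2$.) So the contradiction cannot be located in surjectivity in the way you describe.

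The clean fix---and the paper's actual argument---is a fixed-point observation: since $fgf(x_0)=x_0$ is among your background relations, any homomorphism must send $x_0$ to a fixed point of the word $fgf$ in $\mathcal{A}(W')$, and a direct check shows these fixed points are exactly the chain elements $c_n'$ (one computes $fgf(d_n')=c_{n-2}'\neq d_n'$, and tree elements only move deeper into their tree). This disposes of both $d_m'$ subcases at once and makes your generating-set analysis unnecessary; in the paper's gadget the corresponding word is $g^2f^2$. Alternatively, your injectivity trick for the $m\in W'$ subcase extends to $m\notin W'$: if $\phi(x_0)=d_m'$ then $\phi(c_2)=f^2(d_m')=c_m'$, so $\phi(d_2)=g(c_m')=d_m'=\phi(c_0)$ while $d_2\neq c_0$. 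Either repair still relies on the ``clean type classification'' you defer (a normal-form argument showing the presented congruence identifies nothing beyond the listed relations, so that in particular $d_n\neq c_n$ and $g(d_n)\neq d_n$ for $n\notin W$); this is routine but must be recorded. With that in place, the rest of your argument---the forward direction and the final step that $\phi(c_n)=c_{m+n}'$ forces $W'=W+m$---is correct.
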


\begin{proof}
	We show that $U$ computably reduces to $\iUF{2}{1}$ where $\UF21$ has the language $\{f,g\}$ where $f,g$ are two unary functions. 
    Consider the algebra (see Figure \ref{fig:uf21-complete}) defined by universe $\{a_i, b_i, b_i^\prime, b_i^{\prime\prime}, c_i, c_i^\prime, c_i^{\prime\prime}  \mid  i \in \Z\}$ and 
    \begin{itemize}
        \item $f(a_i) = b_i$ and $g(a_i) = c_i$,
        \item $f(b_i) = a_{i+1}$ and $g(c_i) = a_{i-1}$,
        \item $g(b_i) = b_i^\prime$ and $f(c_i) = c_i^\prime$,
        \item $f(b_i^\prime) = g(b_i^\prime) = b_i^{\prime\prime} = f(b_i^{\prime\prime}) = g(b_i^{\prime\prime})$,
        \item $f(c_i^\prime) = g(c_i^\prime) = c_i^{\prime\prime} = f(c_i^{\prime\prime}) = g(c_i^{\prime\prime})$.
    \end{itemize}

\begin{figure}
\begin{center}
    \begin{tikzcd} 
       & &                       &  b''_{i} \arrow[loop right,"{f,g}"]   &           & &          \\
       & &                       &  b'_{i} \arrow[u,"{f,g}"]   &           &  &         \\
       & b_{i-1}\arrow[rd,"f"] &                       &  b_{i} \arrow[rd,"f"]\arrow[u,"g"]   &           &   b_{i+1}\arrow[rd]&        \\
\cdots \arrow[ru]& & a_i\arrow[ru,"f"]\arrow[ld,"g"]         &                     & a_{i+1}\arrow[ld,"g"]\arrow[ru,"f"]  &  &\cdots\arrow[ld]   \\
       & c_{i}\arrow[lu]&                       &  c_{i+1}\arrow[lu,"g"]\arrow[d,"f"]            &           &  c_{i+2}\arrow[lu,"g"]  &               \\
       & &                       &  c'_{i+1}\arrow[d,"{f,g}"]&&&\\
       & &                       &  c''_{i+1}\arrow[loop right,"{f,g}"]&&&
\end{tikzcd}
\end{center}
\caption{The algebra in Theorem \ref{thm:uf21-complete}}
\label{fig:uf21-complete}
\end{figure}
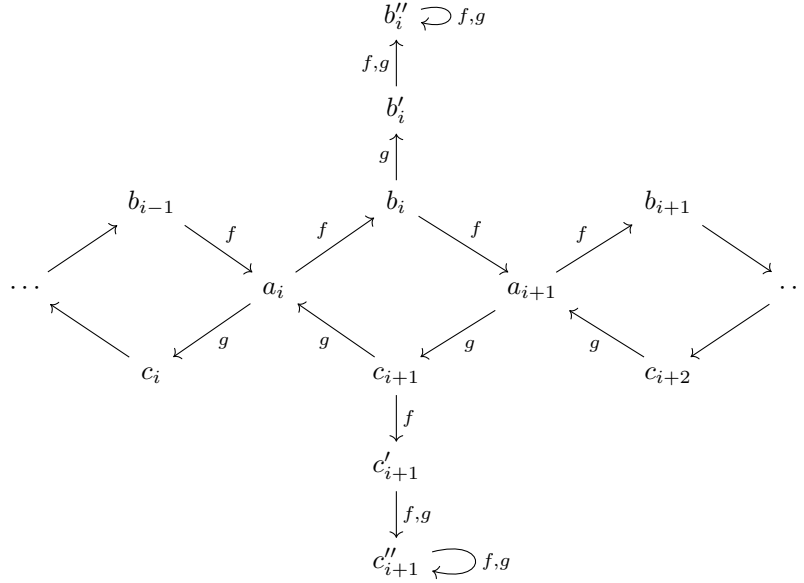

    Observe that this algebra is generated by $a = a_0$ and has a c.e.\ presentation in $\UF{2}{1}$.
	
	We now give a computable reduction $h$ from $U$ to $\UF{2}{1}$. Given some $e \in \nat$, the presentation $h(e)$ is given by enumerating the above presentation together with enumerating 
	\[
	b'_z = b''_z \text{ and } c_z^\prime = c_z^{\prime\prime}  
	\]
    into $h(e)$ when we see that $z$ enters $W_e$.
	
    It is thus clear that $h$ is computable. We need to verify that it is a reduction. Assume $eUi$, and that $W_e+z = W_i$. Then it is straightforward to check that the homomorphism defined by $a_n\mapsto a_{n+z}$ is an isomorphism of $h(e)\cong h(i)$. On the other hand, assume $h(e) \cong h(i)$. Then the isomorphism from $h(e)$ to $h(i)$ must send $a_0$ to some $a_z$, as they are the elements that are fixed by $g^2f^2$.  
    Then it is straightforward to check that $W_e + z = W_i$ and that $e U i$. 
\end{proof}

As a side note, this improves an auxiliary result in \cite{andrews2024two}, where $\Sigma^0_3$-completeness is shown for infinitely many unary function symbols. 

Not much is known about the word problem and isomorphism problem for algebras with unary function symbols. If the answer to the following question was yes, this would be an example where we have a decidable word or isomorphism problem for finite presentations, but a $\Sigma^0_3$-complete isomorphism relation on the c.e.\ presentations. It is not known whether such an example exists.
\begin{question}
    Does $\UF{m}{n}$, $m\geq 2, n\geq 1$, have a decidable word problem or isomorphism problem (i.e.\ for finite presentations)?
\end{question}

In Theorem \ref{acc_upper_bound}, we have shown that ACC implies reducibility to $\eqce$. It is open whether or not the reverse direction holds, i.e.\ not having ACC implying non-reducibility to $\eqce$. Towards this question, the strongest result that we have so far is the following.

\begin{proposition}\label{hom_open_implies_not_below_eqce}
    Let $\V{}$ be a variety where $\V{\fg}$ is homomorphically open. Then $\iV{\fg}\not\le \eqce$.
\end{proposition}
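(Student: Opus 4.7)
My plan is to combine Lemma \ref{effective_quotient_superset} with the Monotonicity Lemma (Lemma \ref{monotonicity_lemma}) to derive a contradiction from homomorphic openness. The underlying idea is the standard ``sandwich'' trick: homomorphic openness gives a chain $\A \twoheadrightarrow \B \twoheadrightarrow \A$ with $\A \not\cong \B$, the effective quotient lemma turns this into an inclusion of c.e.\ sets of relations, and the monotonicity lemma then forces any reduction to $\eqce$ to collapse the sandwich, contradicting $\A \not\cong \B$.

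Concretely, suppose for contradiction that $f$ witnesses $\iV{\fg} \leq_c \eqce$. Using homomorphic openness, I would fix $\A, \B \in \V{\fg}$ with $\A \twoheadrightarrow \B$, $\B \twoheadrightarrow \A$, and $\A \not\cong \B$, together with an $n$ such that both $\A$ and $\B$ are $n$-generated (e.g., take $n$ to be a number of generators of $\A$; since $\B$ is a homomorphic image of $\A$, it is generated by at most that many elements). Fix any c.e.\ index $e_0$ with $\langle X_n \mid W_{e_0}\rangle_\V{} \cong \A$. Applying Lemma \ref{effective_quotient_superset} first to $\A \twoheadrightarrow \B$ and then to $\B \twoheadrightarrow \A$, I obtain indices $e_1$ and $e_2$ such that $W_{e_0}\subseteq W_{e_1} \subseteq W_{e_2}$, with $\langle X_n \mid W_{e_1}\rangle_\V{} \cong \B$ and $\langle X_n \mid W_{e_2}\rangle_\V{} \cong \A$.

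Next I set $g(e) := f(\langle n, e\rangle)$. This is a total computable function, and it is well-defined on c.e.\ sets: if $W_e = W_i$, the presentations $\langle X_n \mid W_e\rangle$ and $\langle X_n \mid W_i\rangle$ are literally identical, so $g(e) \eqce g(i)$, i.e.\ $W_{g(e)} = W_{g(i)}$. Hence the Monotonicity Lemma applies to $g$ and yields $W_{g(e_0)} \subseteq W_{g(e_1)} \subseteq W_{g(e_2)}$. Since $\langle X_n \mid W_{e_0}\rangle \cong \A \cong \langle X_n \mid W_{e_2}\rangle$, the reduction forces $W_{g(e_0)} = W_{g(e_2)}$, and the chain of inclusions collapses: $W_{g(e_0)} = W_{g(e_1)} = W_{g(e_2)}$. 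But $\A \not\cong \B$ means $\langle X_n \mid W_{e_0}\rangle \not\cong \langle X_n \mid W_{e_1}\rangle$, so the reduction \emph{also} demands $W_{g(e_0)} \neq W_{g(e_1)}$, which is a contradiction.

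The only genuinely delicate step in this plan is the effective enlargement of a presentation of $\A$ to one of $\B$ over the \emph{same} finite generating set, but that work has already been done in Lemma \ref{effective_quotient_superset}; once it is invoked, the rest is essentially bookkeeping. The remaining point to double-check is that the common choice of $n$ really works, i.e.\ that both $\A$ and $\B$ admit $n$-generated c.e.\ presentations, which follows immediately from the fact that each is a homomorphic image of the other.
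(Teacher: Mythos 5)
Your proof is correct and follows essentially the same route as the paper: fix a presentation of $\A$, use Lemma \ref{effective_quotient_superset} twice to sandwich a presentation of $\B$ between two presentations of $\A$ over the same generators, and apply the Monotonicity Lemma to the supposed reduction to force a collapse contradicting $\A \not\cong \B$. Your extra step of fixing $n$ and passing to $g(e) := f(\langle n, e\rangle)$ to justify well-definedness on c.e.\ sets is a slightly more careful bookkeeping of a point the paper treats implicitly, but it is the same argument.
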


\begin{proof}
    Toward a contradiction, suppose $f$ is a reduction from $\iV{n}$ to $\eqce$. Let $\A\not\cong\B$ witness homomorphic openness.
    By Lemma \ref{effective_quotient_superset}, non-uniformly, pick codes $\langle g_i,i\rangle,\langle g_k,k\rangle$ for presentations of $\A$ and a code $\langle g_j,j\rangle$ for a presentation of $\B$, s.t.\ $W_i \subsetneq W_j \subsetneq W_k$. 
    Since $f$ is a reduction, $\V{\fg}[\langle g_i,i\rangle] \cong \V{\fg}[\langle g_k,k\rangle] \not\cong \V{\fg}[\langle g_j,j\rangle]$ implies $W_{f(\langle g_i,i\rangle)} = W_{f(\langle g_k,k\rangle)} \neq W_{f(\langle g_j,j\rangle)}$. 
    However, since $f$ is well-defined on c.e.\ sets, by Lemma \ref{monotonicity_lemma} it is monotone and thus we have $W_{f(\langle g_i,i\rangle)} \subsetneq W_{f(\langle g_j,j\rangle)} \subsetneq W_{f(\langle g_k,k\rangle)}$, a contradiction.
\end{proof}

\section{Further Questions}\label{sec:questions}

We conclude with a collection of open problems that naturally emerge from our analysis. These questions point toward a more refined understanding of the complexity landscape of isomorphism relations for c.e.\ algebras, particularly in relation to structural features such as the ascending chain condition (ACC) and benchmark degrees like $\eqce$ and $\Sigma^0_3$-completeness.

Our results so far suggest a dichotomy: all the concrete examples we have analyzed either satisfy ACC or exhibit maximal complexity, being $\Sigma^0_3$-complete. This motivates the following:

\begin{question}
Is there a class of c.e.\ algebras that does not satisfy ACC and whose isomorphism relation is not $\Sigma^0_3$-complete?
More broadly, are there isomorphism relations that lie strictly between $\eqce$ and the $\Sigma^0_3$-complete degree? Is there an isomorphism relation computably equivalent to $\eqce$?
\end{question}

Another striking feature of our examples is their comparability: all the isomorphism relations we studied are pairwise comparable under computable reducibility. However, we expect this to be an artifact of the current scope, and pose:

\begin{question}
Are there two classes of c.e.\ algebras whose isomorphism relations are incomparable under computable reducibility? More ambitiously, are there “natural” examples of such incomparability?
\end{question}

This connects to a broader structural question about the role of the degrees $\emin(\alpha)$ in classifying isomorphism relations. Note that $\emin(\alpha)$ has been defined only below $\omega^\omega$, with potential extensions up to $\varepsilon_0$, so this question admits some room for interpretation.

\begin{question}
Is every isomorphism relation for a class with ACC computably bi-reducible to some $\emin(\alpha)$, for a suitable ordinal $\alpha$?
\end{question}

Finally, in all non-$\Sigma^0_3$-complete examples we have considered, the complexity of the isomorphism relation increases strictly with the number of generators. This raises the following question, aiming to identify more subtle behavior:

\begin{question}
Is there a nontrivial variety $\mathcal{V}$ and an integer $n \geq 1$ such that $\iV{n} \equiv_c \iV{n+1}$,
but $\iV{n}$ is strictly below the $\Sigma^0_3$-complete degree?
\end{question}

\smallskip

Broadly speaking, these questions aim to move toward a structural characterization of the complexity of isomorphism relations for c.e.\ algebras. In particular, we seek: a structural account of when an isomorphism relation lies strictly below $\eqce$;
and a clear understanding of the structural features that force an isomorphism relation to be $\Sigma^0_3$-complete.

\bibliographystyle{alpha}
\bibliography{literatur}

\end{document}